\title[Multiplicity One Theorems over Positive Characteristic]{Multiplicity One Theorems over Positive Characteristic}
\author{Dor Mezer}
\date{\today}
\keywords{Distribution, Multiplicity one, Gelfand pair, invariant distribution}
\subjclass[2010]{20G05, 20G25, 22E50, 46F10}
\begin{document}
\begin{abstract}
In \cite{AGRS} a multiplicity one theorem is proven for general linear groups, orthogonal groups and unitary groups ($GL, O,$ and $U$) over $p$-adic local fields. That is to say that when we have a pair of such groups $G_n\ss G_{n+1}$, any restriction of an irreducible smooth representation of $G_{n+1}$ to $G_n$ is multiplicity free.
This property is already known for $GL$ over a local field of positive characteristic, and in this paper we also give a proof for $O,U$, and $SO$ over local fields of positive odd characteristic. These theorems are shown in \cite{GGP} to imply the uniqueness of Bessel models, and in \cite{CS} to imply the uniqueness of Rankin-Selberg models. We also prove simultaniously the uniqeuness of Fourier-Jacobi models, following the outlines of the proof in\cite{Sun}.\\
By the Gelfand-Kazhdan criterion, the multiplicity one property for a pair $H\leq G$ follows from the statement that any distribution on $G$ invariant to conjugations by $H$ is also invariant to some anti-involution of $G$ preserving $H$.
This statement for $GL, O$, and $U$ over over $p$-adic local fields is proven in \cite{AGRS}.
An adaptation of the proof for $GL$ that works over of local fields of positive odd characteristic is given in \cite{GL case}.
In this paper we give similar adaptations of the proofs of the theorems on orthogonal and unitary groups, as well as similar theorems for special orthogonal groups and for symplectic groups. Our methods are a synergy of the methods used over characteristic 0 (\cite{AGRS}, \cite{Sun}, \cite{Wald}) and of those in \cite{GL case}.
\end{abstract}
\maketitle

\section{Introduction}
Let $\FF$ be a local field of characteristic different from 2.
Let $V$ be a vector space of dimension $n$ over $\FF$. Let $W:=V\oplus \FF v_{n+1}$ be an $n+1$-dimensional vector space containing it.
Assume we have a non-degenerate symmetric bilinear form on $W$, with respect to which $V$ is orthogonal to $v_{n+1}$. Let $O(V),O(W)$ be the orthogonal groups of $V,W$ respectively, and let $SO(V), SO(W)$ denote the respective special orthogonal groups.

The following two theorems are among the main theorems proved in this paper:
\begin{theorem} \label{mult 1 for O}
Let $\pi$ and $\rho$ be irreducible smooth representations of $O(W)$ and $O(V)$ respectively. Then
$$\dim\mathrm{Hom}_{O(V)}(\pi,\rho)\leq 1.$$
\end{theorem}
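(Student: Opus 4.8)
The plan is to reduce the multiplicity-one bound to a statement about invariant distributions via the Gelfand–Kazhdan criterion, exactly as in \cite{AGRS}. Concretely, I would first fix the anti-involution $\sigma$ of $G:=O(W)$ given by $g\mapsto \theta(g)^{-1}$, where $\theta$ is a suitable transpose (the adjoint with respect to the chosen bilinear form composed with conjugation by an element of $O(V)$ inducing a reflection that normalizes $O(V)$); this $\sigma$ preserves $H:=O(V)$. The criterion then tells us that Theorem \ref{mult 1 for O} follows once we establish: every distribution $\xi$ on $G$ that is invariant under the $H$-action $g\mapsto hgh^{-1}$ is automatically $\sigma$-invariant. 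So the core of the argument is the \emph{$\sigma$-invariance of $H$-invariant distributions on $G$}.

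Next I would descend from the group $G$ to the symmetric space and then, via Harish-Chandra descent along semisimplification, to the relevant ``Gelfand pair'' statement on the tangent level. Following \cite{AGRS} and \cite{Wald}, this means reducing — through Luna's slice theorem and an analysis of the relevant descent at semisimple points — to showing that certain $H$-invariant distributions on the null cone, or on $\mathfrak{gl}(V)\oplus V \oplus V^{*}$-type models, that are also invariant under the relevant negation/Fourier-transform symmetry, vanish or are forced to be $\sigma$-invariant. The new input here is that all of this has to be carried out over a local field $\FF$ of positive odd characteristic, so I would systematically replace the characteristic-zero tools: in place of the Archimedean/de Rham arguments one uses the $\ell$-adic or rather the non-Archimedean machinery of \cite{GL case} and \cite{AGRS} — the Bernstein localization principle, Frobenius descent, and the theory of $l$-sheaves / wave-front-set arguments — checking at each step that no division by the characteristic occurs and that the étale-slice and Jacobson–Morozov-type statements remain valid in odd characteristic.

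The decisive step, as in \cite{AGRS}, will be the analysis at the ``bad'' semisimple point where descent does not immediately terminate: the pair $(\mathrm{GL}_n, \mathrm{GL}_n)$-type piece together with the nilpotent cone, where one must show that an $H$-invariant distribution supported on the null cone and killed by the symmetry is zero. This is handled by the key geometric lemma bounding the null cone and by an inductive ``going up and down'' between $O$ and $GL$ situations; I expect this to be the main obstacle, precisely because the characteristic-$p$ subtleties (inseparability phenomena, failure of complete reducibility, behavior of the exponential map and of quadratic forms over $\FF$) are concentrated exactly here. I would resolve it by combining the Harish-Chandra descent already set up with the positive-characteristic geometric invariant theory of \cite{GL case}, together with the multiplicative/Fourier symmetry to rule out the remaining invariant distributions, thereby completing the reduction and hence the proof.
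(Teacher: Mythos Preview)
Your overall architecture---Gelfand--Kazhdan criterion, then Harish-Chandra descent, then analysis on the null cone---is the characteristic-zero blueprint of \cite{AGRS}, but the precise step at which you hope to ``systematically replace the characteristic-zero tools'' is exactly where the argument breaks in positive characteristic, and your proposal does not supply the replacement. In characteristic zero, Harish-Chandra descent at a semisimple point passes to a centralizer that is again a product of classical groups over \emph{separable} extensions of $\FF$, and iterating drives the support down to the nilpotent cone. Over a field of positive characteristic the semisimple part of $A$ may generate an \emph{inseparable} extension of $\FF$; there is then no Jordan decomposition in the needed form, the slice/centralizer description fails, and the descent simply stalls. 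You acknowledge ``inseparability phenomena'' as a possible obstacle but then assert that the \'etale-slice and Jacobson--Morozov statements ``remain valid in odd characteristic''---they do not, in the generality you need, and this is the crux.

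The paper's actual route accepts that Harish-Chandra descent only yields a weaker restriction: the support lies over characteristic polynomials that are powers of a single irreducible (possibly inseparable) factor, not over nilpotents. The missing idea, imported from \cite{GL case} and adapted to the orthogonal/unitary/symplectic setting, is a family of $\tG$-equivariant automorphisms $\rho_g(A,v)=(A,g(A)v)$ of each fiber of the characteristic-polynomial map (together with auxiliary automorphisms $\nu_\lambda$, $\mu_\lambda$). Applying these to an invariant distribution forces its support into the set $R=\{(A,v):\langle A^k v,v\rangle=0\ \text{for all }k\}$ \emph{without} ever reducing to nilpotent $A$. One then linearizes via the Cayley transform (not an exponential), stratifies by $\tG$-orbits in each fiber, and handles a single stratum by a direct analysis of the simple block types. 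Your plan to ``rule out the remaining invariant distributions'' on the null cone never reaches this point, because you cannot get to the null cone; you need the $\rho_g$-mechanism (or an equivalent substitute) to proceed. As a minor aside, in the orthogonal case $\sigma$ is simply $g\mapsto g^{-1}$; no auxiliary reflection is required.
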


\begin{theorem} \label{mult 1 for SO}
Let $\pi$ and $\rho$ be irreducible smooth representations of $SO(W)$ and $SO(V)$ respectively. Then
$$\dim\mathrm{Hom}_{SO(V)}(\pi,\rho)\leq 1.$$
\end{theorem}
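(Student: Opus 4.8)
The plan is to deduce Theorem~\ref{mult 1 for SO} from Theorem~\ref{mult 1 for O} by a Clifford-theoretic argument, supplemented in the one recalcitrant case by the distribution-theoretic input that the body of the paper establishes for orthogonal groups. The first step is to record the group picture. Since $\mathrm{char}\,\FF\neq 2$, the determinant $O(W)\to\{\pm 1\}$ has open kernel, so $SO(W)$ is open and closed of index $2$ in $O(W)$, and likewise $SO(V)$ in $O(V)$; as a non-degenerate symmetric form always has anisotropic vectors, these indices are realized by reflections. Because $V\perp v_{n+1}$, the reflection $\delta:=s_{v_{n+1}}\in O(W)$ has determinant $-1$ and centralizes $O(V)$, so $O(W)=SO(W)\cdot O(V)$ with $SO(W)\cap O(V)=SO(V)$, and for every $h\in O(V)$ of determinant $-1$ the conjugation action of $h$ on $SO(W)$ agrees with that of $\delta$ up to an inner automorphism. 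In particular $\pi^{\delta}|_{SO(V)}\cong\pi|_{SO(V)}$ for every $\pi\in\mathrm{Irr}(SO(W))$, since $\delta$ centralizes $SO(V)$.

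Next I would fix $\pi\in\mathrm{Irr}(SO(W))$ and $\rho\in\mathrm{Irr}(SO(V))$, choose $\tilde\pi\in\mathrm{Irr}(O(W))$ lying over $\pi$, pick a reflection $s\in O(V)\setminus SO(V)$, and split into cases. If $\pi^{\delta}\not\cong\pi$, then $\tilde\pi=\mathrm{Ind}_{SO(W)}^{O(W)}\pi$, hence $\tilde\pi|_{SO(V)}\cong\pi|_{SO(V)}\oplus\pi^{\delta}|_{SO(V)}\cong\pi|_{SO(V)}\oplus\pi|_{SO(V)}$; by Frobenius reciprocity and the preceding isomorphism $2\dim\mathrm{Hom}_{SO(V)}(\pi,\rho)=\dim\mathrm{Hom}_{O(V)}(\tilde\pi,\mathrm{Ind}_{SO(V)}^{O(V)}\rho)$, and since $\mathrm{Ind}_{SO(V)}^{O(V)}\rho$ is semisimple of length at most $2$, Theorem~\ref{mult 1 for O} bounds the right-hand side by $2$, which gives the claim. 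If $\pi^{\delta}\cong\pi$ (so $\tilde\pi$ restricts to $\pi$ on $SO(W)$) and moreover $\rho\not\cong\rho^{s}$, then $\mathrm{Ind}_{SO(V)}^{O(V)}\rho$ is irreducible and $\dim\mathrm{Hom}_{SO(V)}(\pi,\rho)=\dim\mathrm{Hom}_{O(V)}(\tilde\pi,\mathrm{Ind}_{SO(V)}^{O(V)}\rho)\le 1$ directly from Theorem~\ref{mult 1 for O}.

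The remaining case, in which $\pi$ extends to $O(W)$ \emph{and} $\rho$ extends to $O(V)$ (equivalently $\pi^{\delta}\cong\pi$ and $\rho^{s}\cong\rho$), is the main obstacle: the estimate above yields only the bound $2$, and excluding the possibility that both extensions of $\rho$ to $O(V)$ occur in $\tilde\pi|_{O(V)}$ does not follow formally from Theorem~\ref{mult 1 for O}. Here I would pass to distributions: by the Gelfand--Kazhdan criterion it suffices that every distribution $\xi$ on $SO(W)\times SO(V)$ invariant under conjugation by the diagonal $\Delta SO(V)$ be invariant under the anti-involution $\sigma$ used for the pair $O(V)\subseteq O(W)$, which preserves determinants and hence preserves $SO(W)\times SO(V)$. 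Since conjugation by $(s,s)$ preserves $SO(W)\times SO(V)$, one may write $\xi=\xi^{+}+\xi^{-}$ with $\xi^{\pm}=\tfrac12\bigl(\xi\pm(s,s)\cdot\xi\bigr)$ (using $\mathrm{char}\,\FF\neq 2$), where $\xi^{+}$ is $\Delta O(V)$-invariant and $\xi^{-}$ is $\Delta O(V)$-equivariant against the determinant character. Extending $\xi^{\pm}$ by zero to $O(W)\times O(V)$, which is legitimate since $SO(W)\times SO(V)$ is open, closed and $\Delta O(V)$-stable, and invoking the distribution statement for the pair $O(V)\subseteq O(W)$ in its form twisted by a character of $O(V)$, which is what the main body establishes in positive odd characteristic following \cite{AGRS}, \cite{Sun}, \cite{Wald} and \cite{GL case} (the character twist leaving the geometric reductions---Bernstein's localization principle, descent to the singular set, and the homogeneity arguments---untouched), one concludes that $\xi^{\pm}$, and therefore $\xi$, is $\sigma$-invariant. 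This disposes of the outstanding case; carried out for all $\xi$, the same distribution argument in fact proves Theorem~\ref{mult 1 for SO} directly, so the Clifford-theoretic reduction above, while illuminating, can be bypassed.
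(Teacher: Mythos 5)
Your Clifford-theoretic reduction is fine as far as it goes (and you correctly identify that it only yields the bound $2$ in the case where $\pi$ extends to $O(W)$ and $\rho$ extends to $O(V)$), but the step you use to close that case is where the argument breaks. You invoke two things: (a) the Gelfand--Kazhdan criterion for the pair $SO(V)\subseteq SO(W)$ with the \emph{same} anti-involution as in the orthogonal case, namely $g\mapsto g^{-1}$, and (b) a version of the distribution theorem for $O(V)\subseteq O(W)$ twisted by the determinant character of $O(V)$, which you assert "is what the main body establishes". Neither is available. The paper never proves a $\det$-twisted statement: its equivariance character $\chi$ on $\tG$ detects the component $(T,-1)$, not the determinant of an element of $O(V)$, and the whole machinery (the descent steps, Lemma \ref{tc}, the single-stratum analysis) is calibrated to that specific character. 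More seriously, both statements you need are simply false. Take $n=1$ with $W$ a hyperbolic plane: then $SO(V)$ is trivial, $SO(W)\cong\FF^\times$, and $O(V)=\{\pm 1_V\}$ acts on $SO(W)$ by inversion. A distribution such as $\delta_a-\delta_{a^{-1}}$ (with $a\neq a^{-1}$) is $\Delta SO(V)$-invariant, is $(O(V),\det)$-equivariant after your extension by zero, and is \emph{not} invariant under $g\mapsto g^{-1}$. So the $\det$-twisted orthogonal statement fails, and the Gelfand--Kazhdan input you want for the $SO$ pair with plain inversion fails as well; no "untouched geometric reductions" can rescue a false statement.

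This is precisely why the paper treats $SO$ as a separate case rather than as a corollary of Theorem \ref{mult 1 for O}: the anti-involution must be taken as $\sigma(g)=Tg^{-1}T$ with $T\in O(V)$ of order $2$ and $\det T=(-1)^{\lfloor (n+1)/2\rfloor}$ (in the example above this $\sigma$ is the identity on $SO(W)$, which is why the statement becomes true), and the corresponding distribution theorem \ref{G(V), G(W) dist} for $SO$ is proved by running the entire argument with the modified group $\tSO$ carrying the determinant constraint --- including the weaker, $SO$-specific form of the Harish-Chandra descent (Theorem \ref{HC dec 1}) and the Appendix~B computation producing an element of $\tC_A\setminus C_A$ with the prescribed determinant. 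That determinant bookkeeping, together with the MVW-type duality needed to make the Gelfand--Kazhdan implication work for $SO$ representations, is genuine additional work and cannot be obtained formally from the orthogonal multiplicity one theorem plus a character twist. To repair your proof you would have to replace inversion by the twisted $\sigma$ and prove the $SO$ distribution theorem itself, which is what the body of the paper does.
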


There is a unitary analog for the first of these theorems which we shall now formulate.
First, Let $\KK$ be an extension of $\FF$ of degree 2. Let $V$ be a vector space of dimension $n$ over $\KK$. Let $W:=V\oplus \KK v_{n+1}$ be an $n+1$-dimensional vector space containing it.
Assume we have a non-degenerate Hermitian form on $W$, with respect to which $V$ is orthogonal to $v_{n+1}$. Let $U(V),U(W)$ be the unitary groups of $V,W$ respectively. That is, the group of linear transformations preserving the Hermitian form.

The analogue of Theorem \ref{mult 1 for O} that we also prove in this paper is the following:
\begin{theorem} \label{mult 1 for U}
Let $\pi$ and $\rho$ be irreducible smooth representations of $U(W)$ and $U(V)$ respectively. Then
$$\dim\mathrm{Hom}_{U(V)}(\pi,\rho)\leq 1.$$
\end{theorem}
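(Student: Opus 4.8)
The plan is to deduce the statement from a geometric fact about invariant distributions, in complete parallel with the proofs of Theorems~\ref{mult 1 for O} and~\ref{mult 1 for SO}, by grafting the characteristic-zero strategy of \cite{AGRS} and \cite{Wald} onto the positive-characteristic machinery of \cite{GL case}. First I would fix a basis of $W$ adapted to $W=V\oplus\KK v_{n+1}$ and take $\sigma$ to be the transpose in this basis; since $\sigma(g)=\bar g^{-1}$ for $g\in U(W)$, the map $\sigma$ is an anti-involution of $U(W)$ that preserves $U(V)$. By the Gelfand-Kazhdan criterion it then suffices to prove that every distribution on $U(W)$ invariant under the conjugation action of $U(V)$ is automatically $\sigma$-invariant, and this is what the remaining steps aim to establish.

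Next I would linearize. Since $\mathrm{char}\,\FF\neq2$, the Cayley transform $g\mapsto(1-g)(1+g)^{-1}$ gives a $U(V)$-equivariant identification of a neighbourhood of $1$ in $U(W)$ with one of $0$ in $\mathfrak{u}(W)$, sending conjugation to the adjoint action and $\sigma$ to the involution $X\mapsto{}^{t}X$ of $\mathfrak{u}(W)$. A Harish-Chandra descent around the remaining semisimple classes, whose centralizers are products of smaller unitary and general linear groups, together with an induction on $\dim V$, would reduce everything to the Lie-algebra assertion: every $\mathrm{Ad}(U(V))$-invariant distribution on $\mathfrak{u}(W)$ is invariant under $X\mapsto{}^{t}X$. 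Decomposing $\mathfrak{u}(W)=\mathfrak{u}(V)\oplus V\oplus\KK_0$ relative to $v_{n+1}$, where $U(V)$ acts through the adjoint, the standard, and the trivial representation respectively, this becomes a statement about $U(V)$-invariant distributions on the vector space $\mathfrak{u}(V)\times V$, the inert one-dimensional factor $\KK_0$ being removed by a partial Fourier transform; the resulting problem is visibly very close to the orthogonal one on $\mathfrak{o}(V)\times V$.

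Then I would descend onto the nilpotent cone. Using the localization principle in the form valid over an arbitrary local field — the version developed in \cite{GL case}, which does not invoke the exponential map — one stratifies $\mathfrak{u}(V)$ by the conjugacy type of the semisimple part of $X$; over each non-central stratum a Luna-type slice argument writes any invariant and $\sigma$-anti-invariant distribution in terms of distributions for the same kind of problem attached to groups of strictly smaller rank (unitary, general linear, orthogonal, or symplectic), which vanish by the inductive hypothesis, by \cite{GL case}, and by the companion theorems of this paper. What survives is the singular case: distributions on $\mathcal{N}\times V$, with $\mathcal{N}\subset\mathfrak{u}(V)$ the nilpotent cone, that are $U(V)$-invariant and anti-invariant under $(X,w)\mapsto({}^{t}X,\bar w)$.

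This singular case is the main obstacle and absorbs essentially all of the work. Following \cite{AGRS} and \cite{Wald}, I would attach to each nilpotent $X$ a Jacobson-Morozov $\mathfrak{sl}_2$-triple, use it to grade the normal slices, descend by Frobenius reciprocity onto each nilpotent orbit, and reduce to an explicit distribution on the vector space carrying the $\mathfrak{sl}_2$-action, on which a Fourier-transform and homogeneity argument shows that no nonzero distribution can be at once invariant and anti-invariant. The three places that genuinely need attention in positive characteristic are: (i) that the Jacobson-Morozov triples and the induced grading of the slice representations are available for classical Lie algebras in odd characteristic — this is where $\mathrm{char}\,\FF\neq2$ is used most essentially; (ii) that the homogeneity estimates and the non-vanishing of the quadratic Gauss sums / Weil constants entering the Fourier step persist, which again uses oddness of the characteristic; and (iii) that the mechanism by which the Fourier transform interchanges the two invariance conditions remains compatible with the positive-characteristic localization principle. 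Granting these points, the four steps assemble to prove Theorem~\ref{mult 1 for U}; and the identical scheme, with $\mathfrak{u}$ replaced throughout by $\mathfrak{o}$, and by $\mathfrak{sp}$ in the symplectic (Fourier-Jacobi) case, yields Theorems~\ref{mult 1 for O} and~\ref{mult 1 for SO} as well.
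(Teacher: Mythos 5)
Your opening and closing reductions are fine (the Gelfand--Kazhdan step with $\sigma(g)=\bar g^{-1}$, the Cayley transform, and the reformulation as a statement about $U(V)$-invariant distributions on $\mathfrak{u}(V)\times V$ are all essentially what the paper does in Sections \ref{reformulations}--\ref{lie-alg}), but the core of your argument has a genuine gap: you propose to run the full characteristic-zero Harish-Chandra descent, stratifying by the conjugacy class of the semisimple part of $X$ and using Luna-type slices to reduce every non-nilpotent stratum to smaller groups, so that only the nilpotent cone $\mathcal N\times V$ survives. This is precisely the step that breaks over a local field of positive characteristic: an element of $\mathfrak{u}(V)$ need not have a Jordan decomposition into semisimple and nilpotent parts (its characteristic polynomial can be a power of an \emph{inseparable} irreducible polynomial), and for such elements the centralizer is not a product of smaller unitary and general linear groups over a separable extension, so the slice/descent mechanism produces nothing to which an inductive hypothesis applies. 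The paper flags this explicitly as the crucial failure point of \cite{AGRS}, \cite{Wald}, \cite{Sun} in positive characteristic; accordingly it only proves the much weaker descents of Theorems \ref{HC dec 1}, \ref{HC dec 2} and \ref{HC dec 3} (coprime factors of the characteristic polynomial can be split off; the surviving polynomial is a power of a single irreducible, possibly inseparable, factor), and never restricts the support to the nilpotent cone. Your step (i) is also not a technicality one can simply grant: Jacobson--Morozov triples and the associated gradings fail for nilpotent elements of large order when the (odd) residue characteristic is small, so even the single-orbit analysis via $\mathfrak{sl}_2$-homogeneity and Weil constants does not survive intact.

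What replaces all of this in the paper is the adaptation of the key idea of \cite{GL case}: the new $\tG$-automorphisms $\rho_g(A,v)=(A,g(A)v)$, together with $\nu_\lambda$ and $\mu_\lambda$, are used in Section \ref{automorphisms} to force the support of any $(\tG,\chi)$-equivariant distribution into the set $R=\{(A,v): \langle A^k v,v\rangle=0\ \forall k\}$, with no nilpotency assumption on $A$; Section \ref{stratification} then stratifies the fiber $Y_g\times V$ of the characteristic-polynomial map by orbit dimension and plays the partial Fourier transform on $V$ against these support conditions (Claim \ref{orbit}); and Section \ref{single stratum} settles each orbit by the explicit block classification of Proposition \ref{full classification} (simple split, simple non-split, and even/odd nilpotent blocks), via elementary filtration and Lebesgue-measure arguments rather than $\mathfrak{sl}_2$-gradings. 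If you want to salvage your outline, you must either prove a positive-characteristic substitute for the descent past inseparable semisimple parts (which is not available), or abandon the reduction to the nilpotent cone and introduce an analogue of the $\rho_g$-automorphism argument as the paper does.
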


Let us use a uniform notation for all three theorems. Denote by $G(V)$ the group $O(V), SO(V)$ or $U(V)$, and by $G(W)$ the group $O(W), SO(W)$ or $U(W)$.
We have an action of $G(V)$ on $G(W)$ by conjugation.
Let us define an anti-involution $\sigma$ of $G(W)$ in all three cases:\\
In the case of $O$, define $\sigma: g\mapsto g^{-1}$.\\
In the case of $SO$, choose $T\in G(V)$ of order 2 with $\det T = (-1)^{\floor{\frac{n+1}{2}}}$ - One may choose such an element by taking a basis of $V$ with respect to which the symmetric form is diagonal, and in this basis take a diagonal matrix of $\pm 1$ with the appropriate pairity of $-1$ entries. Define $\sigma: g\mapsto Tg^{-1}T$.\\
In the case of $U$, choose a basis of $W$ for which the Hermitian product of all pairs lies in $\FF$ (for example by choosing a basis that diagonalizes the Hermitian form). Then we have an involution $T:v\mapsto \bar{v}$ (writing $v$ as a vector in this basis). Define an anti-involution of $G(W)$ by $\sigma: g\mapsto Tg^{-1}T$.

Let $G(V)$ act on $G(W)$ by conjugation. The following theorem implies Theorem \ref{mult 1 for J} using the Gelfand-Kazhdan criterion:
\begin{theorem} \label{G(V), G(W) dist}
Any $G(V)$-invariant distribution on $G(W)$ is also invariant under $\sigma$.
\end{theorem}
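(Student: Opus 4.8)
The plan is to reproduce the architecture of \cite{AGRS}: generalized Harish--Chandra descent down to the Lie algebra, followed by a Fourier-theoretic analysis of distributions supported on the nilpotent cone, the whole argument running by induction on $n=\dim V$ and using the multiplicity-one theorem for $GL$ over positive characteristic as a black box (it controls the general-linear factors that appear inside centralizers). The only genuinely new work lies in adapting the descent to the setting where $\FF$ has positive characteristic, following the pattern of \cite{GL case}. Throughout write $H:=G(V)$ and $G:=G(W)$. Since $W=V\oplus\FF v_{n+1}$ (resp.\ $W=V\oplus\KK v_{n+1}$), the block decomposition attached to this splitting identifies the Lie algebra $\mathfrak g(W)$, as a representation of $G(V)$, with $\mathfrak g(V)\oplus V$ (plus, in the unitary case, a trivial one-dimensional summand); write $\mathfrak X$ for this ``small model'' and $\tau$ for the involution of $\mathfrak X$ induced by $\sigma$.

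\textbf{Harish--Chandra descent.}
First I would record the elementary but crucial fact that $\sigma$ preserves every closed $H$-orbit in $G$; this comes down to the conjugacy classification of semisimple elements of classical groups, and it is exactly here that the normalization $\det T=(-1)^{\floor{(n+1)/2}}$ in the $SO$-case is forced --- the bare inversion $g\mapsto g^{-1}$ does not stabilize the closed $SO(V)$-orbits, whereas $g\mapsto Tg^{-1}T$ does. Granting this, and using that $\FF$ has characteristic different from $2$ to separate $\tau$-invariant and $\tau$-anti-invariant parts, it suffices to show that every $H$-invariant $\sigma$-anti-invariant distribution on $G$ vanishes. By Bernstein's localization principle one may check this on a neighbourhood of each semisimple conjugacy class $\mathcal C\subseteq G$, and around a point $s\in\mathcal C$ a Luna-type slice identifies such a neighbourhood $H$-equivariantly with a neighbourhood of $0$ in the slice representation $\mathrm{Lie}(Z_G(s))$, on which $Z_H(s)$ acts by conjugation; this is legitimate in odd characteristic because the groups in play are classical and their centralizers are smooth and reductive. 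As the exponential map is unavailable, the slice must be built algebraically: following \cite{GL case} one uses a Cayley transform on the part of $s$ lying away from the eigenvalue $-1$ and disposes of the finitely many conjugacy classes on which $-1$ does occur by a separate, direct argument. Now the pair $\big(Z_G(s),Z_H(s)\big)$ breaks up as a product of pairs of the same shape as $\big(G(W),G(V)\big)$ --- of strictly smaller dimension, and with orthogonal factors entering in the $SO$-case --- together with ``diagonal'' pairs $\big(GL_m,GL_m\big)$; the latter are governed by multiplicity one for $GL$, the former by the induction hypothesis (with the $O$- and $SO$-cases feeding into one another's centralizer steps). This reduces everything to $\mathcal C=\{1\}$, i.e.\ to $\mathfrak g(W)=\mathfrak X$, and, localizing once more on the Lie algebra, to the single assertion: every $H$-invariant $\tau$-anti-invariant distribution on $\mathfrak X$ supported on the nilpotent cone $\mathcal N\subseteq\mathfrak X$ is zero.

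\textbf{The nilpotent core.}
Suppose $\xi$ is such a distribution and $\xi\neq0$. Fix an $H$-invariant $\tau$-compatible nondegenerate bilinear form on $\mathfrak X$ and let $\mathcal F$ be the associated Fourier transform of distributions, which is defined over any local field. Then $\mathcal F\xi$ is again $H$-invariant and $\tau$-anti-invariant, and one shows, as in \cite{AGRS}, that it too is supported on $\mathcal N$. The heart of the matter is then a homogeneity and degree argument, carried out as a careful induction over the finitely many nilpotent $H$-orbits: a distribution whose support and whose Fourier transform's support both lie in the cone $\mathcal N$ lies in a finite-dimensional space of homogeneous distributions built from the orbital measures on $\mathcal N$, and bookkeeping their homogeneity degrees --- which $\mathcal F$ shifts by $-\dim\mathfrak X$ --- against the sign by which $\tau$ acts on the line of equivariant distributions attached to each orbit forces $\xi=0$. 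The per-orbit sign computation is where the explicit shape of $\sigma$ (and of $T$) re-enters. The unitary case runs in parallel over $\KK$, with the obvious analogues of $\mathfrak X$ and $\tau$; for $SO$ the same descent is run directly with the anti-involution $g\mapsto Tg^{-1}T$, the $O$-results already proved being available as inputs to the centralizer steps.

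\textbf{The main obstacle.}
The genuinely new difficulties are concentrated in the descent and in the homogeneity analysis, and both stem from the absence of $\exp$ and from the arithmetic of characteristic $p$. Concretely one must: (i) replace the exponential by a Cayley-type transform in the Harish--Chandra/Luna step, and then dispose \emph{by hand} of the residual conjugacy classes carrying the eigenvalue $-1$ --- the one truly ad hoc piece of the argument; and (ii) re-establish the homogeneity and Fourier-transform lemmas of \cite{AGRS} over $\FF$, where $|\cdot|$ takes values in $q^{\mathbb Z}$, one may never divide by $p$, and the classification of nilpotent orbits must be checked afresh. By contrast, the combinatorial parity computation that finally annihilates the anti-invariant nilpotent distributions goes through essentially as over characteristic $0$ once (i) and (ii) are in place.
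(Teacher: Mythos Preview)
Your proposal has a genuine gap at the descent step. You assert that Harish--Chandra descent, together with an algebraic slice and the Cayley transform, reduces the problem to distributions supported on the nilpotent cone of $\mathfrak X$. In positive characteristic this fails, and it is precisely the point at which the proof of \cite{AGRS} breaks down. The obstruction is inseparability: after localizing on the Lie algebra, one meets elements $A\in\mathfrak g(V)$ whose characteristic polynomial is $f^d$ with $f$ irreducible but \emph{inseparable}; such an $A$ admits no Jordan decomposition over $\FF$, so there is no semisimple part to descend along, and your claim that the centralizer pair ``breaks up as a product of pairs of the same shape \dots of strictly smaller dimension'' has no content for these $A$. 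The honest output of descent in positive characteristic is not the nilpotent cone but the much larger locus of $(A,v)$ with $\Delta(A)$ a power of a single irreducible, and your ``nilpotent core'' homogeneity argument does not touch it. You cite \cite{GL case} but extract only the Cayley transform from it; that is not the idea which makes \cite{GL case} work.

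The paper handles this as follows. Harish--Chandra descent is pushed only as far as it legitimately goes --- to $\Delta(A)=f^d$ with $f$ irreducible (Theorems~\ref{HC dec 1} and~\ref{HC dec 2}) --- and the missing restriction on the support is obtained instead from a new family of $\tG$-equivariant automorphisms $\rho_g:(A,v)\mapsto(A,g(A)v)$ of each fiber of $\Delta$ (Section~\ref{automorphisms}, adapting the key device of \cite{GL case}). Combined with the elementary Lemma~\ref{v perp}, applying all the $\rho_g$ forces the support into $R=\{(A,v):\langle A^kv,v\rangle=0\text{ for all }k\ge0\}$ with \emph{no} nilpotency hypothesis; the stratification and per-orbit Fourier analysis (Sections~\ref{stratification}--\ref{single stratum}) then run over all orbits in a fiber $\Delta^{-1}(f^d)$, not just the nilpotent ones. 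Without the automorphisms $\rho_g$, the gap left by the failure of full descent cannot be closed.
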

The implication is proven both in \cite[Appendix B]{Sun} and in section 1 of \cite{AGRS} in zero characteristic, and the same proofs apply verbatim in arbitrary odd characteristic.

We also prove another theorem which we shall now describe, given in \cite{Sun} for characteristic 0. One may also look there for more extensive explanations about the basic notations and definitions used.\\
Let $A$ be a finite dimensional commutative involutive algebra over $\FF$, and let $V$ be a finitely generated $A$-module. Let $\epsilon = \pm 1$ and let $\tau$ be the involution of $A$. Assume that $V$ is equipped with a non-degenerate $\epsilon$-Hermitian form, i.e. a non-degenerate $\FF$-bilinear map $<\cdot, \cdot>:V\times V\to A$ satisying $A$-linearity in the first argument, and $<v, u> = \epsilon <u, v>^\tau$.
Denote by $S$ the group of all $A$-module automorphisms of $V$ which preserve this form. It is a finite product of general linear groups, unitary groups, orthogonal groups, and symplectic groups.
Denote by $A^{\tau = -\epsilon}$ the subset of $A$ of elements $a$ satisfying $a^\tau = -\epsilon a$.
Let $H$ be the Heisenberg group defined as $\{(v, t)|v\in V, t\in A^{\tau = -\epsilon}\}$ with multiplication
$$(v,t)(v',t') = (v + v', t + t' + \frac{<v, v'>}{2} - \frac{<v', v>}{2}).$$
We have a natural action of $S$ on $H$. Denote by $J:=H\rtimes S$ the semidirect product of $H$ and $S$ with respect to this action. We prove in this paper the following theorem.
\begin{theorem} \label{mult 1 for J}
Let $\pi$ and $\rho$ be irreducible smooth representations of $J$ and $S$ respectively. Then
$$\dim\mathrm{Hom}_{S}(\pi,\rho)\leq 1.$$
\end{theorem}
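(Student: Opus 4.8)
The plan is to follow the zero-characteristic argument of \cite{Sun} --- which itself proceeds by the geometric, distribution-theoretic method of \cite{AGRS} --- replacing every characteristic-zero input by its counterpart over an arbitrary local field as developed in \cite{GL case}. First I would invoke the Gelfand--Kazhdan criterion to reduce Theorem~\ref{mult 1 for J} to a statement about invariant distributions: it suffices to exhibit an involutive anti-automorphism $\sigma$ of $J$ with $\sigma(S)=S$ such that every distribution on $J$ invariant under the conjugation action of $S$ is also $\sigma$-invariant. The auxiliary Gelfand--Kazhdan hypothesis is then automatic, since applying this distribution statement to the Harish-Chandra character of an irreducible representation of $J$, and separately to that of an irreducible representation of $S$ (the latter being an $S$-conjugation-invariant distribution supported on $S\subseteq J$), yields $\pi^{\sigma}\cong\widetilde\pi$ and $\rho^{\sigma}\cong\widetilde\rho$ for all irreducible $\pi$ of $J$ and $\rho$ of $S$. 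The anti-involution $\sigma$ is assembled from those attached to the classical groups: on $S$, a product of $GL$, $U$, $O$ and $\mathrm{Sp}$ factors, one takes on each factor the standard transpose-inverse-type anti-involution; on the Heisenberg part one sets $\sigma(v,t)=(\delta v,\eta t)$ with $\delta\in\mathrm{Aut}_A(V)$ satisfying $\delta^{2}=1$ and $\eta\in\{1,-1\}$, chosen so that $\sigma|_V$ is compatible with $\sigma|_S$ and rescales the $\epsilon$-Hermitian form by $\epsilon$. That $\sigma$ is then an involutive anti-automorphism of $J=H\rtimes S$ is a direct computation with the Heisenberg multiplication law, which uses only $\mathrm{char}\,\FF\neq2$ so that the terms $\tfrac{<v,v'>}{2}$ make sense.

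The substance is the distribution statement, which I would prove by induction on $\dim_\FF V$ using the two-step scheme of \cite{AGRS},\cite{Sun}: a Harish-Chandra descent followed by a vanishing result for distributions with ``nilpotent support''. For the descent, stratify $J$ by the $S$-conjugacy classes of semisimple elements of the $S$-component: near a point lying over a semisimple $s\in S$, an \'etale (Cayley-transform) slice together with Frobenius descent --- both set up over positive characteristic as in \cite{GL case} --- reduce the problem to the analogous problem for the centralizer of $s$ in $J$, which is again a group of the form $H'\rtimes S'$ attached to the smaller $\epsilon$-Hermitian module $V^{s}$ over a (possibly larger) involutive algebra; by the inductive hypothesis this case is done, leaving distributions supported on the preimage of the nilpotent cone of $\mathfrak{s}=\mathrm{Lie}(S)$. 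On that closed stratum one linearizes and runs the Fourier-analytic argument: the partial Fourier transform along the self-dual abelian group $H/Z(H)\cong V$ --- equivalently, the relevant part of the Weil representation, available over any local field of odd characteristic --- is an $S$-equivariant involution that, together with the $\FF^{\times}$-homogeneity of the distribution and a Jacobson--Morozov $\mathfrak{sl}_2$-type argument (for which odd characteristic is good enough for these classical groups, as in \cite{GL case}), confines the distribution to the origin; a final homogeneity estimate then forces it to vanish. The base case $V=0$ is $J=S$, where $\dim\mathrm{Hom}_S(\pi,\rho)\le1$ is Schur's lemma.

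I expect the main obstacle to be exactly this nilpotent-support step in positive characteristic. Over a local field of characteristic zero one has Archimedean wave-front-set techniques or Bernstein's integrality and localization; over positive characteristic one must instead argue entirely within the category of $l$-sheaves on $l$-spaces, controlling homogeneity degrees as in \cite{GL case}, while handling the failures of characteristic-zero phenomena: no logarithm or exponential, so the linearization must proceed through a Cayley transform defined away from a bad locus; possible inseparability of the characteristic polynomials entering the slice analysis; and the need to know that the $\mathfrak{sl}_2$-triples, nilpotent orbits and associated gradings of $\mathfrak{s}$ behave as they do in good characteristic. A secondary, Fourier--Jacobi-specific difficulty is keeping track of the Heisenberg and Weil contributions through the descent: one must check at each stage that the twisted distribution problem for the centralizer is again of the shape covered by Theorem~\ref{mult 1 for J}, so that the induction on $\dim_\FF V$ closes --- which is precisely why one follows the outline of \cite{Sun} rather than that of \cite{AGRS} alone.
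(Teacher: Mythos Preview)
Your reduction to a distribution statement via the Gelfand--Kazhdan criterion is correct and matches the paper. The gap is in the next step: you propose to run the full characteristic-zero Harish--Chandra descent to reduce to distributions supported over the nilpotent cone, and only then to apply the Fourier/homogeneity argument. In positive characteristic this reduction breaks down, and this is not a peripheral issue but the central obstruction. If the semisimple part of an element of $S$ has an \emph{inseparable} irreducible factor in its characteristic polynomial, the slice/centralizer argument does not produce a strictly smaller instance of the same problem: over the relevant inseparable extension the module $V$ does not shrink in $\FF$-dimension, so the induction does not close. You flag ``possible inseparability'' as a secondary worry, but it is exactly what prevents you from ever reaching nilpotent support. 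The paper states this explicitly: the Harish--Chandra descent of \cite{AGRS}, \cite{Wald}, \cite{Sun} fails over positive characteristic for this reason, and only a weak form survives (reducing the characteristic polynomial to a power of a single irreducible, possibly inseparable, rather than to $x^n$).

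The paper's remedy is to abandon the ``reduce to nilpotent, then argue'' scheme and instead prove the key support restriction \emph{uniformly}, without nilpotency. After a partial descent and a Cayley linearization, one introduces families of $\tG$-equivariant automorphisms $\nu_\lambda$, $\mu_\lambda$, and $\rho_g$ of the fibers of the characteristic-polynomial map; pulling the elementary constraint $\langle v,v\rangle=0$ (or $\langle Av,v\rangle=0$ in the symplectic case) through these automorphisms yields $\langle A^k v,v\rangle=0$ for all $k$, i.e.\ support in $R$, with no assumption on the characteristic polynomial of $A$. Only then does one stratify by $\tG$-orbits on the fiber and finish with the $Q_A\subseteq R_A$ and Fourier argument on a single orbit. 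Your $\mathfrak{sl}_2$-triple plan is also suspect: Jacobson--Morozov requires the characteristic to exceed a bound depending on the nilpotent, and the paper avoids it entirely by working with the explicit block classification of Proposition~\ref{full classification}.
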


Again we shall use the method of Gelfand and Kazhdan.
Choose an $\FF$-linear involution $\sigma$ of $V$ such that $<\sigma u, \sigma v> = <v, u>$ - To see that it is always possible, write $G$ as a product of unitary, orthogonal, symplectic and general linear groups with the corresponding orthogonal decomposition of $V$ over $\FF$. In each of these cases we are able to construct such an involution, and we can take the product of these involutions as $\sigma$.
The involution $\sigma$ extends to an anti-involution of $H$ by $(v, t)\mapsto (-\sigma v, t)$, which together with the anti-involution $g\mapsto \sigma g^{-1}\sigma$ of $S$ gives an anti-involution of $J$ (which we shall also call $\sigma$).
Let $S$ act on $J$ by conjugation. The following theorem implies Theorem \ref{mult 1 for J} using the Gelfand-Kazhdan criterion:
\begin{theorem}\label{J dist}
Any $S$-invariant distribution on $J$ is also invariant under $\sigma$.
\end{theorem}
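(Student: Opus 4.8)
As noted above, by the Gelfand--Kazhdan criterion Theorem~\ref{mult 1 for J} follows from Theorem~\ref{J dist}, so I focus on the latter. The plan is to follow the outline of \cite{Sun} (and of \cite{AGRS}), importing the positive-characteristic techniques of \cite{GL case} at each point where $\exp$ or a characteristic-$0$ homogeneity/Fourier argument is used. To begin, observe that $\sigma$ is an anti-involution of $J$ with $\sigma(S)=S$ which intertwines the conjugation action of $S$ with itself (since $\sigma(sxs^{-1})=\sigma(s)^{-1}\sigma(x)\sigma(s)$ and $\sigma(s)\in S$), so $\sigma_*$ is an involution of the space of $S$-invariant distributions on $J$; it therefore suffices to show that there is no nonzero $S$-invariant distribution $\xi$ on $J$ with $\sigma_*\xi=-\xi$. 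I would argue by induction on $(\dim_\FF V,\dim S)$ (lexicographically), the base case $V=0$ being trivial since then $H=A^{\tau=-\epsilon}$ and $\sigma$ acts on $J=H$ as the identity.

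\emph{Step 1: Harish--Chandra descent along $J\to S$.} The projection $p\colon J=H\rtimes S\to S$ is equivariant for the conjugation actions of $S$, so by Bernstein's localization principle it is enough to control $\xi$ near each semisimple $S$-conjugacy class in $S$. Fixing a semisimple $s_0\in S$ with centralizer $S_{s_0}$ and applying Frobenius descent, the corresponding part of $\xi$ becomes an $S_{s_0}$-invariant, $\sigma$-anti-invariant distribution on a slice through the fibre $p^{-1}(s_0)$; replacing the Cayley transform by the algebraic retraction of \cite{GL case}, this slice is $S_{s_0}$-equivariantly a product of a linear factor and a Jacobi group $H'\rtimes S_{s_0}$ of the same type as in the theorem, built from the $s_0$-fixed data (the direction on which $s_0$ has no relevant fixed vectors contributing nothing). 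When $s_0$ is noncentral this datum is strictly smaller, so the inductive hypothesis kills that part of $\xi$; hence we may assume $\xi$ is supported near the centre of $S$, i.e.\ essentially on $\operatorname{Lie}(S)\times H$, with $S$ acting adjointly on $\operatorname{Lie}(S)$ and naturally on $H$.

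\emph{Step 2: the Heisenberg direction via the Weil representation.} Write $H=V\times A^{\tau=-\epsilon}$ and take the partial Fourier transform in the central variable $t$ (legitimate, $A^{\tau=-\epsilon}$ being an $\FF$-vector space with $\operatorname{char}\FF\neq2$). Over the open locus where the dual central variable does not vanish, the Stone--von Neumann theorem in odd characteristic and the associated Weil representation of a metaplectic cover of $S$ reduce the $\sigma$-anti-invariance statement --- via passage to a Schr\"odinger model on a Lagrangian of $V$ --- either to a strictly smaller instance of itself or to an instance of the classical-group statement of Theorem~\ref{G(V), G(W) dist}; here the precise normalisation $(v,t)\mapsto(-\sigma v,t)$ with $\langle\sigma u,\sigma v\rangle=\langle v,u\rangle$ is exactly what makes the anti-involution match up on the Weil-representation side. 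Over the complementary closed locus the central variable decouples (a distribution on a finite-dimensional $\FF$-vector space whose Fourier transform is supported at the origin is a constant multiple of Haar measure), and we are reduced to an $S$-invariant, $\sigma$-anti-invariant distribution on $\operatorname{Lie}(S)\times V$ --- which is also the linear problem underlying Theorem~\ref{G(V), G(W) dist}.

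\emph{Step 3: the linear problem --- descent, Fourier transform, homogeneity (the main obstacle).} On $\operatorname{Lie}(S)\times V$ I would run the AGRS machine: descend along the $S$-invariant polynomial invariants of the $A$-module-with-form (the non-nilpotent strata being disposed of by the inductive hypothesis via their smaller stabilisers), reduce to distributions supported on the relevant nilpotent cone, and then conclude by combining the self-duality of the $S$-action on $\operatorname{Lie}(S)$ --- so that an $S$-invariant distribution is fixed by the partial Fourier transform --- with a homogeneity argument via an Euler vector field, the anti-invariance under $\sigma$ then forcing vanishing. This is the step where positive characteristic genuinely intervenes and is, I expect, the principal difficulty: there is no $\exp$, so every group/Lie-algebra comparison must be realised by an explicit algebraic map; the geometry of the nilpotent cone and the behaviour of homogeneous distributions under the Fourier transform are subtler than in characteristic $0$ (Euler-field arguments need not separate all homogeneity data, and Frobenius-twisted contributions can appear); and the needed local-integrability and finiteness inputs require fresh proofs. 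Following \cite{GL case} I would circumvent these by working in the group model where available, using the finiteness of the set of nilpotent $S$-orbits together with an explicit per-orbit analysis, and replacing the homogeneity-plus-Fourier step by the corresponding statement about singular supports (wave-front sets), which persists in positive characteristic. Steps~1 and~2 are essentially formal once the Cayley substitute and the odd-characteristic Stone--von Neumann theorem are in place, so Step~3 is where the real work lies.
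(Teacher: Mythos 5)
There is a genuine gap, and it sits exactly where you acknowledge the ``real work'' lies. Your Steps~1 and~3 are the characteristic-zero strategy of \cite{Sun} and \cite{AGRS}: Harish–Chandra descent over semisimple classes to confine the support to (a neighbourhood of the centre and then) the nilpotent cone, followed by a Fourier-transform-plus-homogeneity argument. This is precisely the part of the argument that breaks over a field of positive characteristic: the characteristic polynomial of an element of $\mathfrak{g}$ may be a power of an \emph{inseparable} irreducible polynomial, in which case there is no Jordan decomposition over $\FF$, the ``semisimple part'' you want to descend along does not exist, and the centralizer is not a product of smaller groups of the same type over separable extensions to which an inductive hypothesis applies. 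Consequently descent cannot reduce you to the nilpotent cone; the most one gets is that the characteristic polynomial is a power of a single irreducible polynomial (and, for $\mathfrak{sp}$, that this factor is linear or inseparable — Theorems \ref{HC dec 1}–\ref{HC dec 3}). The paper then has to stratify \emph{all} fibres $Y_g\times V$, nilpotent or not, and the per-orbit vanishing (Claim \ref{orbit}) is obtained not from homogeneity or Euler-field arguments but from the new family of $\tG$-automorphisms $\rho_g(A,v)=(A,g(A)v)$ together with $\nu_\lambda,\mu_\lambda$, which force the support into $R$ and $Q_A$, plus an explicit analysis of simple blocks. Your proposed patch — replacing the homogeneity-plus-Fourier step by ``singular supports (wave-front sets), which persists in positive characteristic'' — is an unsubstantiated gesture, not an argument, and it is not what \cite{GL case} does either; likewise the ``algebraic retraction of \cite{GL case}'' you invoke to descend near an arbitrary semisimple $s_0$ does not exist there (the paper uses the Cayley transform only globally, after separating the eigenvalues $\pm1$, to pass from $G$ to $\mathfrak{g}$).

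Your Step~2 is also much heavier than needed and is not how the implication goes in the paper: under conjugation by $S$ and under $\sigma$ the central coordinate $t$ of $H$ is inert, so Theorem \ref{J dist} reduces, by the localization principle and the decomposition of $S$ into factors of type $O, SO, U, Sp$ (Corollaries \ref{LocPrinCor} and \ref{Product}), directly to the statement that any $(\tG,\chi)$-equivariant distribution on $G\times V$ vanishes (Theorem \ref{main goal}); no Stone–von Neumann theorem, metaplectic cover, or Schr\"odinger model enters, and making your Weil-representation reduction precise as a statement about distributions would itself require substantial justification. So the proposal correctly identifies the target reduction in spirit, but the core of the theorem — the vanishing on $G\times V$ (equivalently $\mathfrak{g}\times V$) in odd positive characteristic — is left unproved, and the route you sketch for it is the one known to fail.
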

The implication is proven in \cite[Appendix B]{Sun} in zero characteristic, and the same proof applies verbatim in arbitrary odd characteristic.\\
Sections \ref{reformulations}-\ref{single stratum} are dedicated to the simultaneous proof of Theorems \ref{G(V), G(W) dist} and \ref{J dist}.

\begin{remark}
One could ask whether an analogue of Theorem \ref{mult 1 for U} holds for the subgroup $SU$ of Hermitian operators with determinant $1$. However, the answer turns out to be negative even for the most simple case of $\dim V = 1$. In this case, $SU(V) = \{1\}$, while $SU(W)$ is non-commutative, showing that the two groups do not satisfy a multiplicity one property.
\end{remark}

\subsection{Corollaries of the main theorems}
There are corollaries of Theorems \ref{mult 1 for O}, \ref{mult 1 for SO}, \ref{mult 1 for U} and \ref{mult 1 for J} which were shown over characteristic 0, and the proofs of implication still apply over positive characteristic.
We list them here with references.

The following theorem appears in \cite{Sun} as Theorem B, and its implication from Theorem \ref{mult 1 for J} is shown in Appendix A of the same.
\begin{theorem}
Let $G$ denote one of the groups $GL(n), U(n)$, and $Sp(2n)$ (note that $U(n)$ is dependent on the choice of a Hermitian form), regarded as a subgroup of $Sp(2n)$ as usual. Let $\tG$ be the double cover of $G$ induced by the metaplectic cover $\widetilde{Sp}(2n)$ of $Sp(2n)$. Denote by $\omega_\psi$ the smooth oscillator representation of $\widetilde{Sp}(2n)$ corresponding to a non-trivial character $\psi$ of $\FF$. Then for any irreducible smooth representation $\pi$ of $G$, and any genuine irreducible smooth representation $\pi'$ of $\tG$, one has that
$$\dim \operatorname{Hom}_G(\pi'\otimes\omega_\psi \otimes \pi, \CC)\leq 1.$$
\end{theorem}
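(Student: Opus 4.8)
The plan is to obtain this theorem as a consequence of Theorem~\ref{mult 1 for J}, following the argument of \cite[Appendix A]{Sun} and checking that each step survives in odd positive characteristic. First I would realize each of the three groups $G$, together with the ambient Heisenberg group, inside the framework of Theorem~\ref{mult 1 for J}: take $\epsilon=-1$ throughout, with $A=\FF$ and the trivial involution for $Sp(2n)$, with $A=\FF\times\FF$ and the swap involution for $GL(n)$, and with $A=\KK$ and its non-trivial automorphism for $U(n)$. In each case $S$ is the prescribed group $G$, the center $A^{\tau=-\epsilon}$ of the Heisenberg group $H$ is one-dimensional over $\FF$, and $\operatorname{tr}_{A/\FF}\langle\cdot,\cdot\rangle$ turns the underlying $\FF$-space of $V$ (of dimension $2n$) into a symplectic space; thus $H$ is the Heisenberg group of that symplectic space, $S=G$ embeds in $Sp(2n)$ in the standard way, and the cover $\tG$ is exactly the preimage of $G$ in $\widetilde{Sp}(2n)$.

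Next, fix $\psi$, view $\omega_\psi$ as a representation of $H\rtimes\widetilde{Sp}(2n)$, and let $\rho_\psi$ be its restriction to $H$, the Schr\"odinger representation with central character $\psi$. The Stone--von Neumann theorem asserts that $\rho_\psi$ is the unique irreducible smooth representation of $H$ with this central character, and that every smooth representation of $H$ with central character $\psi$ is $\rho_\psi$-isotypic; its proof (via a Lagrangian model and Mackey's criterion) only uses that $2$ is invertible, hence is valid in odd positive characteristic, and likewise Weil's construction of the metaplectic double cover and of the smooth oscillator representation $\omega_\psi$ goes through over any local field of characteristic $\ne 2$. Consequently, sending a genuine smooth representation $\pi'$ of $\tG$ to $\Pi:=\rho_\psi\otimes\pi'$ --- where $\tG$ acts diagonally via $\omega_\psi|_{\tG}$ on the first factor and $\pi'$ on the second, the two genuineness twists cancelling so that $\Pi$ descends to $J=H\rtimes S$ --- sets up a bijection between genuine irreducible smooth representations of $\tG$ and irreducible smooth representations of $J$ on which the center of $H$ acts by $\psi$; irreducibility is preserved because $\rho_\psi$ is $H$-irreducible.

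Finally I would unwind the relevant Hom-space. For any irreducible smooth representation $\pi$ of $G$, restricting the $S$-action on $\Pi$ along $S\hookrightarrow J$ and lifting it to $\tG$ identifies
$$\operatorname{Hom}_S(\Pi,\pi^\vee)\cong\operatorname{Hom}_{\tG}(\omega_\psi\otimes\pi',\pi^\vee)\cong\operatorname{Hom}_{\tG}(\pi'\otimes\omega_\psi\otimes\pi,\CC)=\operatorname{Hom}_G(\pi'\otimes\omega_\psi\otimes\pi,\CC),$$
where the middle isomorphism uses that irreducible smooth representations of $G$ are admissible (still true in positive characteristic), so that $(\pi^\vee)^\vee=\pi$, and the last equality holds because $\pi'\otimes\omega_\psi\otimes\pi$ is non-genuine. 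Since $\Pi$ is an irreducible smooth representation of $J$ and $\pi^\vee$ is an irreducible smooth representation of $S$, Theorem~\ref{mult 1 for J} gives $\dim\operatorname{Hom}_S(\Pi,\pi^\vee)\le 1$, which is the assertion. Everything after the setup is formal, so the only real obstacle is the foundational point flagged above: one must be sure that the smooth oscillator representation, the metaplectic cover, and the Stone--von Neumann theorem behave exactly as in characteristic $0$ over local fields of odd positive characteristic --- once this is in place, the reduction of \cite[Appendix A]{Sun} applies verbatim.
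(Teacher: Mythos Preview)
Your proposal is correct and matches the paper's approach: the paper does not give an independent proof of this statement but simply records that it appears as Theorem~B in \cite{Sun} and that its implication from Theorem~\ref{mult 1 for J} is shown in Appendix~A there. Your sketch spells out that reduction and flags the positive-characteristic checks (Stone--von~Neumann, the metaplectic cover, admissibility) more explicitly than the paper does, but the route is the same.
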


The following theorem appears in \cite{CS}, along with its implication from the above theorem (see \cite{CS} for the definitions and notation used).
\begin{theorem}[Uniqueness of Rankin-Selberg models]
For all irreducible smooth representations $\pi$ of $GL(n)$ and $\sigma$ of $GL(r)$, and for all generic characters $\chi$ of the $r$-th Rankin-Selberg subgroup $R_r$ of $GL(n)$, one has that
$$\dim\operatorname{Hom}_{R_r}(\pi\otimes\sigma, \chi)\leq 1.$$
\end{theorem}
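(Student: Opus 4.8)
The plan is not to reprove this statement directly, but to transport the argument of \cite{CS} --- which derives it from the immediately preceding theorem (Theorem~B of \cite{Sun}, the bound $\dim\operatorname{Hom}_G(\pi'\otimes\omega_\psi\otimes\pi,\CC)\le 1$ for $G$ a product of general linear, unitary and symplectic groups) --- and to verify that this derivation is insensitive to the characteristic of $\FF$. So the first step is to recall from \cite{CS} how the space $\operatorname{Hom}_{R_r}(\pi\otimes\sigma,\chi)$ is analyzed: by exploiting the structure of the mirabolic subgroup of $GL(n)$, its Bernstein--Zelevinsky derivatives, and the associated Jacquet and parabolic-induction functors, one strips off the $n-r$ ``extra'' coordinates and identifies $\operatorname{Hom}_{R_r}(\pi\otimes\sigma,\chi)$, through a chain of isomorphisms, with a space of the shape $\operatorname{Hom}_{GL(m)}(\pi'\otimes\omega_\psi\otimes\rho,\CC)$ for a suitable $m$ and suitable irreducibles $\rho,\pi'$ built from $\pi$ and $\sigma$, together in the residual cases with ordinary $GL(m)\subseteq GL(m+1)$ branching multiplicities.

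Granting that reduction, the bound follows at once: the first type of space is at most one-dimensional by the $GL$ case of Theorem~B of \cite{Sun}, which holds over local fields of positive odd characteristic by Theorem~\ref{mult 1 for J} together with Appendix~A of \cite{Sun}; the branching multiplicities are bounded by $1$ by the multiplicity one theorem for general linear groups, already known over local fields of positive characteristic (see the introduction and \cite{GL case}). So the real content of the transfer is the second step --- listing the statements from \cite{CS} that are actually invoked and checking that each rests only on formal properties of the category of smooth representations of an $\ell$-group over a non-archimedean local field with finite residue field: exactness of the Jacquet and induction functors, second adjointness, the geometric lemma, Frobenius reciprocity, and the combinatorics of derivatives. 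None of these requires $\operatorname{char}\FF = 0$, so I expect this step to be routine bookkeeping.

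The one point that genuinely has to be re-examined --- and where I expect whatever mild difficulty there is to concentrate --- is the occurrence of the oscillator representation $\omega_\psi$ and of the metaplectic double cover inside the reduction. Their construction passes through the Stone--von Neumann theorem for the Heisenberg group over $\FF$ and uses quantities such as the Weil index and the factor $1/2$ in the Heisenberg multiplication law, which is precisely the reason for the hypothesis of odd characteristic; this is the same hypothesis already imposed in Theorems~\ref{mult 1 for J} and \ref{J dist}, and under it the Heisenberg group and its unique irreducible representation with a fixed central character behave exactly as in characteristic zero, so $\omega_\psi$ is well defined with all the properties used in \cite{CS}. With these checks in hand, the argument of \cite{CS} applies verbatim and gives $\dim\operatorname{Hom}_{R_r}(\pi\otimes\sigma,\chi)\le 1$.
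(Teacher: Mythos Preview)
Your proposal is correct and follows exactly the paper's approach: the paper does not prove this statement directly but simply records that it appears in \cite{CS} together with its derivation from the preceding theorem (Theorem~B of \cite{Sun}), and implicitly relies on the fact that the implication carries over to positive odd characteristic. Your write-up is in fact more detailed than the paper's, which gives no argument beyond the citation.
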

This following two theorems appear in \cite[Chapters 12-16]{GGP}, along with their implications from \ref{mult 1 for O}, \ref{mult 1 for U}, \ref{mult 1 for J}, and the multiplicity 1 property for $GL(n+1), GL(n)$ which is proven for positive characteristic in in \cite[Theorem 1.2]{GL case} and in \cite{AAG}.
\begin{theorem}[Uniqueness of Bessel Models]

Let $V$ be a linear space with a symmetric or Hermitian (including the case $\KK=\FF\times \FF$) form. Denote the respective orthogonal or unitary group $G(V)$. Let $W$ be a subspace of odd codimension on which the form is non-degenerate and so that $W^\perp$ is split. Let $H$ be the Bessel group corresponding to $W$, considered as a subgroup of $G(V)\times G(W)$, and let $\nu$ be a generic character of $H$. Then for any irreducible smooth representations $\pi$ of $G(V)$ and $\pi'$ of $G(W)$, one has:
$$\dim\operatorname{Hom}_H(\pi\otimes\pi', \nu)\leq 1.$$
\end{theorem}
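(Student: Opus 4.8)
The plan is to run the reduction of \cite[Chapters 12--16]{GGP}, which deduces the uniqueness of Bessel models from multiplicity one in the basic, codimension-one case, and to check that every step of that reduction survives verbatim over an arbitrary local field $\FF$ of odd characteristic. Write $2d+1$ for the codimension of $W$ in $V$. When $d=0$ the group $H$ is simply $G(W)$, embedded diagonally in $G(V)\times G(W)$ via the inclusion $G(W)\subset G(V)$ coming from $\dim V=\dim W+1$, the character $\nu$ is trivial, and the assertion is precisely Theorem \ref{mult 1 for O} when the form is symmetric, Theorem \ref{mult 1 for U} when it is Hermitian over a quadratic field extension $\KK/\FF$, and --- when $\KK=\FF\times\FF$, so that $G(V)$ and $G(W)$ are general linear groups --- the multiplicity one property for $GL(n+1)\supset GL(n)$, established over positive characteristic in \cite[Theorem 1.2]{GL case} and in \cite{AAG}. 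So it suffices to reduce the case $d\ge 1$ to the case $d-1$.

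For the inductive step, recall that, $W^{\perp}$ being split, it contains a complete isotropic flag $0\subset X_1\subset\cdots\subset X_d$ with $\dim X_i=i$; let $P=MN$ be the parabolic of $G(V)$ stabilising this flag, so that $M$ is, up to the relevant identifications, a product of copies of $GL_1$ with the group $G(W')$ of a space $W'$ of codimension $2d-1$ in $V$, and $H=N\rtimes G(W)$ with $\nu$ obtained from a generic character of $N$. Peeling off the unipotent layers of $N$ one at a time, applying Frobenius reciprocity and analysing the resulting double cosets by Mackey theory, one bounds $\operatorname{Hom}_H(\pi\otimes\pi',\nu)$ above by the analogous Hom-space attached to the Bessel datum of codimension $2d-1$; because the codimension remains odd throughout, no Fourier--Jacobi (Heisenberg) layer ever intervenes, and after $d$ iterations we land in the base case. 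I will follow \cite{GGP} through these steps, observing that the tools involved --- exactness of the Jacquet functors, smooth Frobenius reciprocity on $\ell$-groups, and the stratification of an $\ell$-sheaf on a double coset space into the locally closed pieces indexed by orbits --- are valid over every non-archimedean local field, and that the reduction itself is a formal inequality requiring no multiplicity-one hypothesis: the only deep inputs are the base cases listed above.

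The step I expect to demand the most care is confirming that the geometry underlying this orbit analysis in \cite[Chapters 12--16]{GGP} is genuinely characteristic-free --- that the decomposition of the relevant partial flag variety into $P$-orbits, and the identification of the point stabilisers, are produced by the same linear algebra in characteristic $p$ as in characteristic $0$. Orbits of an algebraic group over $\FF$ are locally closed in every characteristic, so the stratification is always available; one must only verify that no hidden dimension count, separability assumption, or splitting of a short exact sequence of groups has been used. The standing hypothesis $\operatorname{char}\FF\neq 2$ is exactly what makes this go through: it guarantees that the non-degenerate forms and the two-step nilpotent groups $N$ --- whose multiplication law carries a factor $\tfrac12$ --- behave as in characteristic $0$, and (relevant to the companion Fourier--Jacobi theorem rather than to this one) it also secures the Stone--von Neumann theorem and the construction of the oscillator representation, which require only a nontrivial additive character and the invertibility of $2$. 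Granting these verifications, the descent is formal, and the bound $\dim\operatorname{Hom}_H(\pi\otimes\pi',\nu)\le 1$ follows from Theorems \ref{mult 1 for O}, \ref{mult 1 for U} and the general linear multiplicity one theorem.
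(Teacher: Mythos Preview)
Your proposal is correct and matches the paper's approach exactly: the paper does not give an independent proof of this statement but simply records that it appears in \cite[Chapters 12--16]{GGP} together with its derivation from Theorems \ref{mult 1 for O}, \ref{mult 1 for U}, \ref{mult 1 for J}, and the $GL(n+1)\supset GL(n)$ multiplicity one theorem of \cite{GL case} and \cite{AAG}. Your sketch of the \cite{GGP} descent and your verification that its ingredients (Jacquet functors, smooth Frobenius reciprocity, orbit stratifications on $\ell$-spaces) are characteristic-free is more explicit than what the paper itself says, but the strategy is identical.
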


\begin{theorem}[Uniqueness of Fourier-Jacobi models]
Let $V$ be a linear space with a skew-symmetric or skew-Hermitian (including the case $\KK=\FF\times \FF$) form. Denote the respective symplectic or unitary group $G(V)$. Let $W$ be a subspace of even codimension on which the form is non-degenerate and so that $W^\perp$ is split. Let $H$ be the Fourier-Jacobi group corresponding to $W$, considered as a subgroup of $G(V)\times G(W)$. Let $\widetilde{H}$ be its appropriate double cover, and let $\nu$ be the representation of $\widetilde{H}$ constructed in \cite[Chapter 12]{GGP} (depending on some choices of characters). Take either $\pi$ to be an irreducible smooth representations of $\tG(V)$ (an appropriate double cover of $G(V)$) and $\pi'$ to be such a representation of $G(W)$, or the other way around, i.e. $\pi$ to be an irreducible smooth representation of $G(V)$ and $\pi'$ to be such a representation of an appropriate $\tG(W)$.
Then one has:
$$\dim\operatorname{Hom}_{\widetilde{H}}(\pi\otimes\pi', \nu)\leq 1.$$
\end{theorem}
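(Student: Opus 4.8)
The plan is to deduce the theorem from the ``equal rank'' (basic) case of Fourier--Jacobi multiplicity one, following the reduction of \cite[Chapters 12--16]{GGP}. The basic case ($W=V$, i.e.\ $W^\perp=0$) is the statement that $\dim\operatorname{Hom}_G(\pi'\otimes\omega_\psi\otimes\pi,\CC)\le 1$ for $G$ one of $GL(n)$, $U(n)$, $Sp(2n)$; this is \cite[Theorem B]{Sun}, recalled above, and its implication from Theorem \ref{mult 1 for J} is carried out in \cite[Appendix A]{Sun} by an argument that transfers verbatim to odd positive characteristic, as do the various Gelfand--Kazhdan-type implications used above. Granting this, together with the $GL(n+1),GL(n)$ multiplicity one of \cite[Theorem 1.2]{GL case} and \cite{AAG}, what remains is to check that the GGP reduction of a general Fourier--Jacobi model to the basic case is valid over an arbitrary local field of odd characteristic.

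Concretely, since $W^\perp$ is split of even dimension $2r$, I would decompose it as the orthogonal sum of a hyperbolic plane $\langle e,f\rangle$ and a split space of dimension $2(r-1)$, and set $V':=\langle e,f\rangle^{\perp}$, a non-degenerate subspace of $V$ with $W\subseteq V'$ and $\dim V'-\dim W=2(r-1)$. The key structural fact is that $H$ contains a unipotent subgroup $N_1$ (inside the radical of the parabolic $P_1\subseteq G(V)$ stabilizing $\langle e\rangle$) together with a character $\psi_1$ of it, such that passing to the twisted Jacquet module $\pi_{N_1,\psi_1}$ replaces the datum $(G(V),G(W),H,\nu)$ by the analogous Fourier--Jacobi datum $(G(V'),G(W),H',\nu')$ of codimension $2(r-1)$. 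What must be shown is that this twisted Jacquet step neither loses information nor creates extra functionals; in \cite{GGP} this is done by a Mackey-theoretic orbit analysis --- the geometric lemma, Bernstein's second adjointness and the localization principle --- whose only input beyond the combinatorics is the $GL(n+1),GL(n)$ multiplicity one, used to dispose of the double cosets other than the distinguished (``open'') one. Iterating the step $r$ times brings us to the basic case, and the alternative ``$\pi$ on $\tG(V)$ or $\pi$ on $G(V)$'' in the statement merely records which of the two factors carries the metaplectic cover after the iteration; both are handled symmetrically.

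Since the combinatorial skeleton of this reduction is insensitive to the characteristic, the main obstacle is to confirm that its analytic inputs persist over odd positive characteristic. These are: exactness and transitivity of the (twisted) Jacquet functors; the geometric lemma, Bernstein's second adjointness and the localization principle for smooth representations of $\ell$-groups, used to split the relevant Hom space along double cosets and to control all but the distinguished one; and, most essentially, the existence of the metaplectic double cover and of the smooth oscillator representation $\omega_\psi$ with its usual properties --- precisely the place where the hypothesis of odd characteristic is needed, exactly as it already is for the basic case and for Theorem \ref{mult 1 for J}. The Jacquet-functor and localization machinery is available in this generality by the same considerations as in \cite{GL case} and \cite{AAG}, and with it the delicate part of the argument of \cite{GGP} --- the vanishing of all but one of the double-coset contributions --- goes through unchanged. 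Combining the reduction with Theorem \ref{mult 1 for J} (through \cite[Theorem B]{Sun}) and the $GL(n+1),GL(n)$ multiplicity one then yields $\dim\operatorname{Hom}_{\widetilde H}(\pi\otimes\pi',\nu)\le 1$.
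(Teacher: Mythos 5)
Your proposal matches the paper's treatment: the paper proves this theorem only by citation, observing that the reduction of \cite[Chapters 12--16]{GGP} deduces it from Theorem \ref{mult 1 for J} (via the basic case, \cite[Theorem B]{Sun}) together with the $GL(n+1),GL(n)$ multiplicity one of \cite{GL case} and \cite{AAG}, and that the implication arguments carry over to odd positive characteristic. Your more detailed description of the GGP descent and its analytic inputs is a faithful expansion of exactly this route, so the approach is essentially the same.
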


\subsection{Comparison with previous works}
In \cite{GL case} the proof of a multiplicity one theorem for $GL_n$ in characteristic 0 is extended to include also positive odd characteristic.
The premise of this paper is to use these methods to extend the proof of additional multiplicity one theorems from characteristic 0 to positive odd characteristic. The proofs for characteristic 0 on which we base this paper are given in \cite{AGRS}, \cite{Wald} and \cite{Sun}. Let us give an overview of the methods and steps of this paper, explaining which ones are taken from \cite{AGRS}, \cite{Wald} and \cite{Sun}, which ones were introduced in \cite{GL case}, and which ones are new to this paper.

In section \ref{reformulations} we give reformulations of the problems in a way identical to the ones given in \cite{AGRS}, \cite{Wald}, and \cite{Sun}.

In section \ref{HC dec} we use a certain analogue of the Harish-Chandra descent method for positive characteristic, that gives weaker results than in the zero characteristic case. The entirety of this method as used in \cite{AGRS}, \cite{Wald} and \cite{Sun} fails over positive characteristic fields, due to non-seperable extensions, and in fact this is the crucial point in which these proofs fail for a positive characteristic.

In section \ref{lie-alg} we pass from the group to its Lie algebra using Cayley transorm. The difference from the analogous linearization in \cite{AGRS}, \cite{Wald} and \cite{Sun} is that in these papers linearization is done after using the method of Harish-Chandra descent to restrict the possible support to the unipotent cone, whereas we only have a weaker restriction on the support.

In section \ref{automorphisms} we adapt the main new ideas of \cite{GL case} to the  unitary, orthogonal, and symplectic settings, introducing a new family $\rho$ of automorphisms playing the same role as $\rho$ in \cite{GL case}.

Section \ref{stratification} uses the method of stratification to reduce the problem to a problem on a single orbit. The contents of this section are completely analogous to what is done in \cite{AGRS}, \cite{Wald} and \cite{Sun}, only without the restriction of nilpotency, which is not truly needed, as was the case in \cite{GL case}.

In section \ref{single stratum} we solve the previous problem on a single orbit by repeating the arguments and ideas used in \cite{AGRS}, \cite{Wald} and \cite{Sun}, sometimes giving slight generalizations of them.

The archimedean version of Theorems \ref{mult 1 for O} - \ref{J dist} can be found in \cite{SZ}.
Special cases of Theorems \ref{mult 1 for O} and \ref{G(V), G(W) dist} can be found in \cite{AGS}.

\subsection{Acknowledgements}
I would like to deeply thank my advisor, Dmitry Gourevitch, for helping me with my work on this paper, for exposing me to this fascinating area of mathematics, and for the guidance and support along the way. I would also like to thank him for the exceptional willingness to help.\\
I would also like to thank Guy Henniart for fruitful discussions he had with my advisor.\\
D.M. was partially supported by ERC StG grant 637912.

\section{Preliminaries and Notation}\label{preliminaries}
Most of this section is borrowed from the preliminaries sections of \cite{AG} and \cite{AGRS}, and also of \cite{GL case} (which was also mostly borrowed from the previous two).\\
Let $\FF$ be a local field of characteristic different from 2. Let $\KK$ be a field which is either equal to $\FF$ or a quadratic field extension of it. Let $\lambda\mapsto\bar{\lambda}$ be either the nontrivial automorphism of $\KK/\FF$ or the identity automorphism if $\KK = \FF$.
Let $V$ be a $\KK$-linear space of dimension $n$. Assume that we have on $V$ a non-degenerate sesquilinear form $B$ which is either symmetric, Hermitian, or symplectic (in the Hermitian case $\KK\neq \FF$ and in the other cases $\KK = \FF$).
Denote by $G=G(V)$ one of the groups $O(V), SO(V), U(V)$, or $Sp(V)$.  Denote by $\mathfrak{g}=\mathfrak{g}(V)$ the Lie algebra of $G$, which is either $\mathfrak{o}(V)$, $\mathfrak{u}(V)$, or $\mathfrak{sp}(V)$, i.e. linear transformations $A$ satisfying $A^*=-A$ with respect to the symmetric, Hermitian, or symplectic form.
In the $O, SO, U$ cases, assume we have $W\supseteq U$ of dimension $n+1$ with an extension of $<\cdot, \cdot>$ to a form on $W$ of the same type. In these cases we have also $G(W)$, and we may consider $G$ as a subgroup of it.

Let $\tG$ denote the subgroup of $\operatorname{Aut}_\FF(V)\times \{\pm 1\}$ consisting of all $(T, \delta)$ such that $<Tu, Tv> = <u, v>$ if $\delta = 1$ and $<Tu, Tv> = <v, u>$ if $\delta = -1$. In the case that $G = SO$ we also require that $\det T = \delta ^ {\floor{\frac{n+1}{2}}}$. This group contains $G$ as a subgroup of index $2$.
Denote by $\chi: \tG \to {\pm 1}$ the character $(T, \delta)\mapsto \delta$.
We have natural actions of $G$ on $G(W), G, \mathfrak{g}, V$ (by conjugation on all but $V$, on which we let $G$ act in the usual way).
This action extends to an action of $\tG$ by  $(T, \delta).A := TA^\delta T^{-1}$ on $G(W)$ and $G$, by $(T, \delta).A := \delta TAT^{-1}$ on $\mathfrak{g}$, and by $(T, \delta).v := \delta Tv$ on $V$.

\begin{notation}
Let $\Delta:G \to \KK[x]$ be the characteristic polynomial map. We shall also consider it as a map from $G\times V$, by first projecting onto $G$.
\end{notation}

We shall use the standard terminology of $l$-spaces introduced in
\cite{BZ}, section 1. We denote by $\Sc(Z)$ the space of Schwartz
functions on an $l$-space $Z$, and by $\Sc^*(Z)$ the space of
distributions on $Z$ equipped with the weak topology.

\begin{notation} [Fourier transform]
Let $W$ be a finite dimensional vector space over $\FF$ with a nondegenerate bilinear form $B$ on $W$. We denote by
$\Fou_B:\Sc^*(W) \to \Sc^*(W)$ the Fourier transform defined using $B$ and the self-dual Haar measure on $W$.
If $W$ is clear from the context, we sometimes omit it from the notation and denote  $\Fou=\Fou_W$.
\end{notation}

\begin{remark}
In the Hermitian case, we take for Fourier transform the $\FF$-bilinear form given by taking the trace of the Hermitian form.
\end{remark}

\begin{theorem}[Localization principle, see \cite{Ber}, section 1.4] \label{LocPrin}
Let  $q:Z \to T$ be a continuous map of $l$-spaces. We can consider $\Sc ^*(Z)$ as an $\Sc(T)$-module. Denote $Z_t:=
q^{-1}(t)$.
For any $M$ which is a closed $\Sc(T)$-submodule of $\Sc^*(Z)$,
$$M=\overline{\bigoplus_{t \in T} (M \cap \Sc^*(Z_t))}.$$
\end{theorem}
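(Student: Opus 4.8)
The plan is to prove this via the Gelfand--Kazhdan-style duality between $\Sc^*(Z)$ and $\Sc(Z)$, reducing the statement about distributions to one about test functions. Since $\Sc^*(Z)$ with the weak topology is a locally convex space whose continuous dual is $\Sc(Z)$, and since $N:=\bigoplus_{t\in T}(M\cap\Sc^*(Z_t))$ is contained in the (weakly) closed subspace $M$, the bipolar theorem makes the asserted equality $M=\overline N$ equivalent to the implication: if $\phi\in\Sc(Z)$ satisfies $\langle\xi,\phi\rangle=0$ for every $t\in T$ and every $\xi\in M\cap\Sc^*(Z_t)$, then $\langle\eta,\phi\rangle=0$ for every $\eta\in M$. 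So I would fix such a $\phi$ and such an $\eta$, and aim to show $\langle\eta,\phi\rangle=0$. Along the way I would record the $l$-space facts used repeatedly: a point of $T$ has a neighbourhood basis of compact open sets; a compact subset of $T$ disjoint from $t$ can be engulfed in a compact open set avoiding $t$; and the zero set of a function in $\Sc(Z)$ is open. Using these one identifies the image of $\Sc^*(Z_t)$ in $\Sc^*(Z)$ (push-forward along the closed embedding of the fibre) with the distributions supported on $Z_t$, equivalently with those annihilated by the ideal $\mathfrak m_t=\{f\in\Sc(T):f=0 \text{ near } t\}$.

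Next I would localize the problem on $T$. The set $q(\operatorname{supp}\phi)$ is compact, so, replacing $T$ by a compact open neighbourhood of it and $Z$ by the preimage (which changes neither the fibres, nor the hypothesis on $\phi$, nor the closedness of the submodule $\mathbf 1_{q^{-1}(\cdot)}M$), I may assume $T$ is compact. Then the crucial reduction is the \emph{local claim}: for every $t\in T$ there is a compact open $W_t\ni t$ such that $\langle\eta',\phi\rangle=0$ for every $\eta'\in M$ with $\operatorname{supp}\eta'\subseteq q^{-1}(W_t)$. Granting it, one covers the compact $T$ by finitely many $W_{t_1},\dots,W_{t_m}$, refines to a finite partition $T=\bigsqcup_j V_j$ with each $V_j$ contained in some $W_{t_{i(j)}}$, writes $\eta=\sum_j\mathbf 1_{q^{-1}(V_j)}\eta$ — each summand lying in $M$ (multiplication by the idempotent $\mathbf 1_{V_j}$ of the ring $\Sc(T)$ preserves $M$) and supported in $q^{-1}(W_{t_{i(j)}})$ — and concludes $\langle\eta,\phi\rangle=\sum_j 0=0$.

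The heart of the matter, and the step I expect to be the main obstacle, is the local claim at a fixed $t$. The natural attempt is a contradiction argument: if it failed, then for every compact open $W\ni t$ there would be $\eta_W\in M$ supported in $q^{-1}(W)$ with $\langle\eta_W,\phi\rangle=1$; letting $W$ shrink to $t$ one would like a subnet $\eta_W\to\zeta$, whence $\operatorname{supp}\zeta\subseteq q^{-1}(t)=Z_t$, $\zeta\in M$ (as $M$ is closed) and $\langle\zeta,\phi\rangle=1$, contradicting $\phi\perp M\cap\Sc^*(Z_t)$. The genuine difficulty is that a distribution in $\Sc^*(Z)$ does not restrict to the \emph{closed} fibre $Z_t$, and the family $\{\eta_W\}$ — even the canonical choice $\mathbf 1_{q^{-1}(W)}\eta$ — need not be weakly bounded, so there is no Banach--Alaoglu shortcut to a convergent subnet; indeed $\mathbf 1_{q^{-1}(W)}\eta$ genuinely need not converge as $W\downarrow\{t\}$. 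Making the limiting argument work is precisely the content of Bernstein's localization principle: one exploits the barrelledness of $\Sc(Z)$, the completeness of $\Sc^*(Z)$ for the weak topology, and a careful bookkeeping of supports over the compact-open neighbourhood basis of $t$ to extract the fibre-supported distribution in $M$ that detects $\phi$, forcing $\langle\eta,\phi\rangle=0$. I would therefore either carry out this delicate extraction in detail or, since the result is classical, simply invoke \cite[Section~1.4]{Ber}.
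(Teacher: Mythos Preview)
The paper does not give its own proof of this theorem; it is quoted as a black box with the citation ``see \cite{Ber}, section~1.4'' and then used via its corollaries. Your proposal, after a reduction sketch, ultimately lands in the same place: ``simply invoke \cite[Section~1.4]{Ber}''. In that sense your final answer coincides with the paper's treatment.

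On the sketch itself: the bipolar/duality reduction to the statement about a test function $\phi$, the replacement of $T$ by a compact open set, and the finite-partition argument deducing the global vanishing from your ``local claim'' are all standard and correct in the $l$-space setting. The genuine content, as you yourself flag, sits entirely in the local claim, and your proposed contradiction argument does not go through: the family $\{\eta_W\}$ (or $\{\mathbf 1_{q^{-1}(W)}\eta\}$) has no reason to be weakly bounded, so no Banach--Alaoglu-type extraction is available, and invoking ``barrelledness of $\Sc(Z)$'' and ``completeness of $\Sc^*(Z)$'' does not by itself produce the needed fibre-supported limit. Saying that ``making the limiting argument work is precisely the content of Bernstein's localization principle'' is circular --- that is the theorem you are proving. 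So what you have is an honest outline that correctly isolates where the difficulty lies, but not a proof; since the paper also defers to \cite{Ber} here, that is not a discrepancy with the paper, only with the ambition of giving a self-contained argument.
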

Informally, it means that in order to prove a certain property of
distributions on $Z$ it is enough to prove that distributions on
every fiber $Z_t$ have this property.
\begin{corollary} \label{LocPrinCor}
Let $q:Z \to T$ be a continuous map of $l$-spaces. Let an $l$-group
$H$ act on $Z$ preserving the fibers of $q$. Let $\mu$
be a character of $H$. Suppose that for any $t\in T$,
$\Sc^*(q^{-1}(t))^{H,\mu}=0$. Then $\Sc^*(Z)^{H,\mu}=0$.
\end{corollary}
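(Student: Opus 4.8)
The plan is to apply the Localization principle (Theorem \ref{LocPrin}) to the space
$$M := \Sc^*(Z)^{H,\mu}$$
of all $(H,\mu)$-equivariant distributions on $Z$, and to conclude $M=0$ from the assumed vanishing on the fibers. So the whole argument is a formal deduction from Theorem \ref{LocPrin}; the work is in checking the hypotheses of that theorem and in reading off what its conclusion says in the equivariant setting.

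First I would verify that $M$ is a closed $\Sc(T)$-submodule of $\Sc^*(Z)$. Recall that the $\Sc(T)$-module structure on $\Sc^*(Z)$ is given by $(f\cdot\xi)(\phi) = \xi\bigl((f\circ q)\cdot\phi\bigr)$ for $f\in\Sc(T)$, $\xi\in\Sc^*(Z)$ and $\phi\in\Sc(Z)$. Since $H$ preserves the fibers of $q$, the pulled-back function $f\circ q$ is $H$-invariant, so multiplication by it commutes with the $H$-action on $\Sc^*(Z)$; hence $M$ is stable under this action of $\Sc(T)$. It is closed in the weak topology, being the intersection, over all $h\in H$ and $\phi\in\Sc(Z)$, of the kernels of the weakly continuous functionals $\xi\mapsto (h\xi-\mu(h)\xi)(\phi)$.

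Next, Theorem \ref{LocPrin} yields
$$M=\overline{\bigoplus_{t\in T}\bigl(M\cap\Sc^*(Z_t)\bigr)},$$
where $\Sc^*(Z_t)$ is the subspace of $\Sc^*(Z)$ of distributions supported on the closed subset $Z_t=q^{-1}(t)$. By the standard formalism of $l$-spaces in \cite{BZ}, restriction to $Z_t$ identifies $\Sc^*(Z_t)$ with $\Sc^*(q^{-1}(t))$, and since $H$ acts on $Z_t$ this identification is $H$-equivariant, so $M\cap\Sc^*(Z_t)\cong\Sc^*(q^{-1}(t))^{H,\mu}$. By hypothesis the latter is zero for every $t$, so every summand vanishes; hence the direct sum vanishes, and therefore so does its closure $M$.

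The only mildly delicate points — and the ones I would spell out in full — are the verification that $M$ is an $\Sc(T)$-submodule, which is precisely where the assumption that $H$ preserves the fibers of $q$ is used, and the $H$-equivariant identification of distributions on $Z$ supported on the fiber $Z_t$ with distributions on $Z_t$. Both are routine consequences of the $l$-space formalism of \cite{BZ}, and constitute essentially all of the content of the corollary beyond a direct invocation of Theorem \ref{LocPrin}.
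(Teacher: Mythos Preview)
Your proof is correct and is exactly the intended deduction: the paper states this as an immediate corollary of the Localization principle without further proof, and your argument spells out precisely the routine verification that $M=\Sc^*(Z)^{H,\mu}$ is a closed $\Sc(T)$-submodule (using that $H$ preserves fibers) and then reads off the conclusion from Theorem \ref{LocPrin}.
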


\begin{corollary} \label{Product}
Let $H_i \subset \widetilde{H}_i$ be $l$-groups acting on $l$-spaces
$Z_i$ for $i=1, \ldots, k$. Suppose that
$\Sc^*(Z_i)^{H_i}=\Sc^*(Z_i)^{\widetilde{H}_i}$ for all $i$. Then
$\Sc^*(\prod Z_i)^{\prod H_i}=\Sc^*(\prod Z_i)^{\prod
\widetilde{H}_i}$.
\end{corollary}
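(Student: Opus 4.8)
The plan is to reduce to the case $k=2$ by a straightforward induction, and then to prove that case by a ``partial distribution'' (slice) argument, using the fact that for $l$-spaces the Schwartz space of a product is the algebraic tensor product of the Schwartz spaces of the factors.

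First I would set up the induction. Write $\prod_{i=1}^k Z_i = Z_1\times Z'$ with $Z' = \prod_{i=2}^k Z_i$, and put $H' = \prod_{i=2}^k H_i \subseteq \widetilde H' = \prod_{i=2}^k \widetilde H_i$. The inductive hypothesis gives $\Sc^*(Z')^{H'} = \Sc^*(Z')^{\widetilde H'}$, so it suffices to prove the statement for two spaces, i.e.\ that any $\xi\in\Sc^*(Z_1\times Z_2)^{H_1\times H_2}$ is automatically $(\widetilde H_1\times\widetilde H_2)$-invariant. Here, of course, one applies the two-factor case with $(Z_1,H_1,\widetilde H_1)$ and $(Z',H',\widetilde H')$.

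For the two-factor case I would use that $\Sc(Z_1\times Z_2) = \Sc(Z_1)\otimes\Sc(Z_2)$ as vector spaces (every locally constant compactly supported function on the product is a finite sum of functions $f_1\otimes f_2$; see \cite{BZ}). Hence a distribution is determined by its values on the elements $f_1\otimes f_2$, and testing invariance on this spanning set suffices. For fixed $f_2\in\Sc(Z_2)$, the functional $\xi_{f_2}\colon f_1\mapsto \xi(f_1\otimes f_2)$ lies in $\Sc^*(Z_1)$; since $h\in\widetilde H_1$ acts on $\Sc(Z_1\times Z_2)$ by $h\cdot(f_1\otimes f_2) = (h f_1)\otimes f_2$, one checks that $\xi$ is invariant under $\widetilde H_1\times\{e\}$ precisely when every $\xi_{f_2}$ is $\widetilde H_1$-invariant, and likewise with $H_1$ in place of $\widetilde H_1$. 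Now, $\xi$ being $H_1\times H_2$-invariant is in particular $(H_1\times\{e\})$-invariant, so each $\xi_{f_2}\in\Sc^*(Z_1)^{H_1} = \Sc^*(Z_1)^{\widetilde H_1}$ by hypothesis; thus $\xi$ is $(\widetilde H_1\times\{e\})$-invariant. Symmetrically, forming the slices ${}_{f_1}\xi\colon f_2\mapsto \xi(f_1\otimes f_2)\in\Sc^*(Z_2)$ and using $(\{e\}\times H_2)$-invariance together with $\Sc^*(Z_2)^{H_2} = \Sc^*(Z_2)^{\widetilde H_2}$, we get that $\xi$ is $(\{e\}\times\widetilde H_2)$-invariant. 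Since $\widetilde H_1\times\widetilde H_2$ is generated by $\widetilde H_1\times\{e\}$ and $\{e\}\times\widetilde H_2$, it follows that $\xi\in\Sc^*(Z_1\times Z_2)^{\widetilde H_1\times\widetilde H_2}$, and the reverse inclusion is trivial.

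There is no genuine obstacle here; the only points that need care are the identification $\Sc(Z_1\times Z_2) = \Sc(Z_1)\otimes\Sc(Z_2)$ and the elementary bookkeeping showing that invariance may be tested on the spanning family $\{f_1\otimes f_2\}$ and on the two generating subgroups $\widetilde H_1\times\{e\}$, $\{e\}\times\widetilde H_2$ separately. I note that one could alternatively try to deduce the two-factor case from the localization principle (Corollary \ref{LocPrinCor}) applied to the projection $Z_1\times Z_2\to Z_1$, but the tensor-product argument is more direct and sidesteps any fiberwise analysis.
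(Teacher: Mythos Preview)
Your argument is correct. The paper does not spell out a proof of this corollary: it is simply stated immediately after the localization principle (Theorem~\ref{LocPrin} and Corollary~\ref{LocPrinCor}), with the implicit understanding that it follows from those. The intended derivation is presumably the one you mention at the end: apply localization to the projection $\prod Z_i \to Z_1$, reduce to fibers $\{z_1\}\times\prod_{i\ge 2}Z_i$, and iterate.

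Your route is genuinely different and more elementary: you bypass localization entirely and use only the algebraic identity $\Sc(Z_1\times Z_2)=\Sc(Z_1)\otimes\Sc(Z_2)$ together with the slice functionals $\xi_{f_2}$ and ${}_{f_1}\xi$. This has the advantage of being completely self-contained (no appeal to the $\Sc(T)$-module structure or the closure statement in Theorem~\ref{LocPrin}) and of making the two uses of the hypotheses $\Sc^*(Z_i)^{H_i}=\Sc^*(Z_i)^{\widetilde H_i}$ transparent. The localization approach, on the other hand, fits the corollary into the general machinery already set up in the paper and avoids having to invoke the tensor decomposition of Schwartz spaces separately. Either is fine; your version would stand on its own without Theorem~\ref{LocPrin}.
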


\begin{theorem}[Frobenius descent, {\cite[section 1.5]{Ber}}] \label{Frob}
Let $H$ be a unimodular $l$-group acting on two $l$-spaces $E$ and $Z$, with the action on $Z$ being transitive. Suppose that we have an $H$-equivariant map $\varphi:E \to Z$. Let $x\in Z$ be a point with a unimodular stabilizer in $H$. Denote by $F$ the fiber of $x$ with respect to $\varphi$.
Then for any character $\mu$ of $H$ the following holds:
\begin{enumerate}[label=(\roman*)]
    \item There exists a canonical isomorphism $\mathrm{Fr}: \Sc^*(E)^{H,\mu} \to \Sc^*(F)^{\mathrm{Stab}_H(x),\mu}$.
    \item For any distribution $\xi \in \Sc^*(E)^{H,\mu}$, $\mathrm{Supp}(\mathrm{Fr}(\xi))=\mathrm{Supp}(\xi)\cap F$.
    \item Frobenious descent commutes with Fourier transform.
\end{enumerate}
\end{theorem}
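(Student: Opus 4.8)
The plan is to realize the space of Schwartz functions on $E$ as a compactly induced representation of $H$ and then to invoke Frobenius reciprocity for $l$-groups. First I would use the transitivity of the $H$-action on $Z$ to identify $Z$ with the quotient $H/H_x$, where $H_x:=\mathrm{Stab}_H(x)$; since $H$ is an $l$-group the projection $H\to H/H_x$ admits continuous local sections, so the $H$-equivariant map $\varphi:E\to Z$ exhibits $E$ as an $H$-equivariant fiber bundle over $H/H_x$ with typical fiber $F=\varphi^{-1}(x)$, on which $H_x$ acts. Using a local section one identifies a Schwartz function on $E$ with a function on $H$ valued in $\Sc(F)$, transforming appropriately under right translation by $H_x$ and compactly supported modulo $H_x$; total disconnectedness makes this an honest equality $\Sc(E)\cong\mathrm{ind}_{H_x}^{H}\Sc(F)$ of smooth $H$-representations, where $\mathrm{ind}$ denotes (un-normalized) compact induction. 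A priori the modular characters of $H$ and of $H_x$ intervene in this identification, but by hypothesis both groups are unimodular, so there is no twist.

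With this in hand, statement (i) follows from Frobenius reciprocity: for any character $\mu$ of $H$ one has
$$\Sc^*(E)^{H,\mu}=\mathrm{Hom}_H\big(\Sc(E),\CC_\mu\big)\cong\mathrm{Hom}_H\big(\mathrm{ind}_{H_x}^{H}\Sc(F),\CC_\mu\big)\cong\mathrm{Hom}_{H_x}\big(\Sc(F),\CC_{\mu}\big)=\Sc^*(F)^{H_x,\mu},$$
the middle isomorphism being the standard adjunction $\mathrm{Hom}_H(\mathrm{ind}_{H_x}^{H}-,-)\cong\mathrm{Hom}_{H_x}(-,\mathrm{res}_{H_x}-)$, in which the modular factor that usually appears is again trivial because $H$ and $H_x$ are unimodular. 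This composite is the canonical map $\mathrm{Fr}$. Concretely, it amounts to restricting a distribution to a tubular neighborhood of $F$ of the form $(\text{image of a local section})\times F$ and integrating out the base coordinates.

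Statements (ii) and (iii) are then obtained by tracing through this construction. For (ii): in the local product description, $\mathrm{Fr}(\xi)$ near a point $y\in F$ is obtained from $\xi$ near $y$ by pushing forward along the base factor, and pushing forward along a factor of a product of $l$-spaces changes the support only by intersection with the slice; hence $\mathrm{Supp}(\mathrm{Fr}(\xi))=\mathrm{Supp}(\xi)\cap F$, which is a local assertion checked on basic open sets. For (iii): when $E$ carries a vector-bundle structure over $Z$ and $\Fou$ denotes the fiberwise Fourier transform, $\Fou$ is $H$-equivariant and operates within each local product factor $F$, so it commutes with the ``restrict and integrate out the base'' recipe defining $\mathrm{Fr}$, giving $\mathrm{Fr}(\Fou\,\xi)=\Fou(\mathrm{Fr}(\xi))$.

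The main obstacle is the content of the first paragraph: making rigorous the identification $\Sc(E)\cong\mathrm{ind}_{H_x}^{H}\Sc(F)$ — existence of local sections, and the precise compact-support and equivariance conditions defining Schwartz sections of an $l$-bundle — and carefully checking that all modular characters cancel under the unimodularity hypotheses on $H$ and $H_x$. Once this set-up is in place, the reciprocity isomorphism and the support and Fourier-transform compatibilities are formal.
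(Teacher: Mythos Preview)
The paper does not prove this theorem at all: it is stated in the preliminaries section as a quotation from \cite[section 1.5]{Ber} and is used as a black box throughout. So there is no ``paper's own proof'' to compare your proposal against.

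That said, your outline is essentially the standard argument one finds in Bernstein's lecture notes: identify $Z\cong H/H_x$, trivialize $E$ locally to write $\Sc(E)\cong\mathrm{ind}_{H_x}^{H}\Sc(F)$, and then apply Frobenius reciprocity, with the unimodularity hypotheses killing the modular twists. The support and Fourier compatibilities are indeed formal consequences of the explicit local-product description. One small point: for part (iii) as the paper formulates it (immediately after the theorem statement), the Fourier transform is taken along an auxiliary linear factor $W$ in $E=Z\times W$, not along a vector-bundle structure on $E$ itself; your argument adapts to this with no change, since $\Fou_W$ acts fiberwise and commutes with the integration over the base.
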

To formulate (iii) explicitly, let $W$ be a finite dimensional linear space over $\FF$ with
a nondegenerate bilinear form $B$, and suppose $H$ acts on $W$ linearly
preserving $B$.
Then for any $\xi \in \Sc^*(Z\times W)^{H,\mu}$, we have
$\Fou_{B}(\mathrm{Fr}(\xi))=\mathrm{Fr}(\Fou_{B}(\xi))$, where $\mathrm{Fr}$ is taken with respect to the projection $Z \times W \to Z$.

\begin{remark}
Let $Z$ be an $l$-space and $Q\subset Z$ be a closed subset. We may
identify $\Sc^*(Q)$ with the space of all distributions on $Z$
supported on $Q$. In particular, we can restrict a distribution
$\xi$ to any open subset of the support of $\xi$.
\end{remark}

\begin{definition} \label{Regular element}
An element $A \in \mathrm{gl}(V)$ is said to be regular if its minimal polynomial is equal to its characteristic polynomial. In case that this polynomial $f$ of $A$ is a power of an irreducible polynomial we call $A$ a \textbf{minimal} regular element.
\end{definition}

\begin{theorem} [Rational Canonical Form] \label{Rational}
Any element $A\in \mathrm{gl}(V)$ can be represented as a direct sum of minimal regular elements. Moreover, the isomorphism classes of these elements are uniquely determined by the conjugacy class of $A$ inside $GL(V)$ and vice versa.
\end{theorem}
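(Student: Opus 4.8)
The plan is to derive this from the structure theory of finitely generated modules over a principal ideal domain, applied to the $\KK[x]$-module structure that $A$ induces on $V$. First I would observe that $V$ becomes a $\KK[x]$-module by letting $x$ act as $A$; since $V$ is finite-dimensional over $\KK$, this module is finitely generated and torsion, so by the structure theorem it decomposes as $\bigoplus_i \KK[x]/(p_i(x)^{e_i})$ for irreducible polynomials $p_i$ and exponents $e_i\geq 1$, and this collection of primary components (with multiplicity) is uniquely determined up to reordering by the isomorphism type of the $\KK[x]$-module $(V,A)$.

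Next I would match up the two sides of the claimed bijection. On each summand $\KK[x]/(p_i(x)^{e_i})$, the operator "multiplication by $x$" has minimal polynomial $p_i^{e_i}$, which equals its characteristic polynomial (both have degree $\deg p_i \cdot e_i = \dim_\KK \KK[x]/(p_i^{e_i})$); hence each such summand, as an operator, is exactly a minimal regular element in the sense of Definition \ref{Regular element}. Conversely, a minimal regular element $B\in \mathrm{gl}(U)$ with characteristic polynomial $p^e$ ($p$ irreducible) makes $U$ into a cyclic $\KK[x]/(p^e)$-module — cyclic because the minimal polynomial of $B$ has degree $\dim_\KK U$, forcing the existence of a vector whose $B$-orbit spans $U$ — so $U\cong \KK[x]/(p^e)$ as a $\KK[x]$-module. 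This gives a dictionary: direct sum decompositions of $(V,A)$ into minimal regular pieces correspond precisely to decompositions of the $\KK[x]$-module $V$ into primary cyclic pieces. Existence of such a decomposition of $A$ is then the existence half of the structure theorem.

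For the uniqueness statement — that the isomorphism classes of the minimal regular summands are determined by, and determine, the $GL(V)$-conjugacy class of $A$ — I would argue that two operators $A, A'$ on $V$ are conjugate in $GL(V)$ if and only if they induce isomorphic $\KK[x]$-module structures on $V$: an intertwiner $gAg^{-1}=A'$ is exactly a $\KK[x]$-module isomorphism, and conversely. Combining this with the uniqueness clause of the structure theorem (the multiset of primary factors is a complete invariant of the module) yields that the multiset of isomorphism classes of minimal regular summands is a complete conjugacy invariant. The translation from "direct sum of operators" to "direct sum of $\KK[x]$-modules" via the dictionary above finishes the argument.

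The only genuinely delicate point is that $\KK$ here may be a field in which irreducible polynomials need not be separable (the paper works over local fields of positive characteristic, and later flags inseparability as "the crucial point" where characteristic-zero proofs break). However, the structure theorem over the PID $\KK[x]$ is purely algebraic and requires no separability hypothesis, so the main obstacle is really just bookkeeping: being careful that "minimal regular" as defined matches "primary cyclic" without accidentally importing semisimplicity or a splitting of $p_i$ into distinct roots. I expect the argument to go through cleanly once that correspondence is nailed down precisely.
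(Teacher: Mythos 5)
Your proposal is correct, and it is the standard argument: the paper itself states Theorem \ref{Rational} without proof, treating it as the classical rational canonical form theorem, which rests exactly on the reduction you describe — regard $(V,A)$ as a torsion $\KK[x]$-module, apply the structure theorem over the PID $\KK[x]$, and identify primary cyclic summands with minimal regular elements (conjugacy in $GL(V)$ being the same as $\KK[x]$-module isomorphism). Your observation that no separability hypothesis is needed is also right; the only step worth spelling out is the cyclic-vector claim for a minimal regular $B$ (degree of the minimal polynomial equal to $\dim U$ forces a single invariant factor, hence a cyclic module), which follows from the same structure theorem you already invoke.
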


This form is called the rational canonical form of $A$.

\begin{definition}\label{f^*}
For any polynomial $f$ given by $f(x)=\sum_{i=0}^n a_i x^i$, define:
$$f^*(x)=\sum_{i=0}^n (-1)^i \bar{a_i} x^i,$$
$$f^\dagger(x)=\sum_{i=0}^n \overline{a_{n-i}} x^i.$$
\end{definition}
\begin{remark}
For any $A\in \mathfrak{g}$ (respectively $A\in G(V)$), we have $f(A)^* = f^*(A)$ (respectively $f(A)^\dagger=A^{-n}f^\dagger(A)$).
\end{remark}

\begin{lemma}\label{eigenvalues perp}
Let $A\in \mathfrak{g}$ ($A\in G$).
Let $\left(f_i\right)_{i\in I}$ be the different irreducible factors in the characteristic polynomial of $A$. Let $\left(V_i\right)_{i\in I}$ be the generalized eigenspace associated with each.
Take $i,j \in I$, not necessarily different. If $f_i^*\neq \pm f_j$ (respectively $f_i^\dagger\nmid f_j$), then $V_i\perp V_j$.
\end{lemma}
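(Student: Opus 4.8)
The plan is to reduce the statement to the primary decomposition of $V$ under $A$ together with the identity $(f(A))^{*}=f^{*}(A)$ in the Lie algebra case and $(f(A))^{*}=A^{-n}f^{\dagger}(A)$ in the group case (recorded in the remark after Definition~\ref{f^*}, where $*$ denotes the adjoint with respect to the form and $n=\deg f$). Write the characteristic polynomial of $A$ as $\prod_{i\in I}f_i^{m_i}$ with the $f_i$ distinct monic irreducible; then the primary decomposition gives $V=\bigoplus_{i\in I}V_i$ with each $V_i=\ker f_i(A)^{m_i}$ being $A$-stable. In the group case record that, since $A\in G$ is invertible, $0$ is not a root of any $f_i$, so $f_i\neq x$ and $f_i^{\dagger}$ is again irreducible of degree $\deg f_i$; likewise $f_i^{*}$ is always irreducible of degree $\deg f_i$ because $*$ is a ring automorphism of $\KK[x]$.

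First I would prove the one-sided perpendicularity: for every $v\in V_i$ and every $u\in V$,
$$\langle v,\ \bigl(f_i^{*}(A)\bigr)^{m_i}u\rangle=0 \qquad\bigl(\text{resp. } \langle v,\ \bigl(f_i^{\dagger}(A)\bigr)^{m_i}u\rangle=0\bigr).$$
Indeed $\langle f_i(A)^{m_i}v,u\rangle=0$ since $v\in\ker f_i(A)^{m_i}$; moving the operator to the other argument via the identity above, and using that $*$ (resp. $\dagger$) is multiplicative, the left-hand side becomes $\langle v,(f_i^{*}(A))^{m_i}u\rangle$ in the Lie algebra case and $\langle v,A^{-n_im_i}(f_i^{\dagger}(A))^{m_i}u\rangle$ in the group case, with $n_i=\deg f_i$; in the latter case the invertible factor $A^{-n_im_i}$ can be absorbed into $u$.

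Next I would invoke the hypothesis. Since $f_j$ is irreducible, $f_i^{*}\neq\pm f_j$ (resp. $f_i^{\dagger}\nmid f_j$) together with the irreducibility of $f_i^{*}$ (resp. $f_i^{\dagger}$) of degree $\deg f_i$ forces $f_i^{*}$ and $f_j$ (resp. $f_i^{\dagger}$ and $f_j$) to be coprime in $\KK[x]$, hence so are $(f_i^{*})^{m_i}$ and $f_j^{m_j}$. By Bézout there are $a,b\in\KK[x]$ with $a\,(f_i^{*})^{m_i}+b\,f_j^{m_j}=1$; applying this (evaluated at $A$) to an arbitrary $u\in V_j=\ker f_j(A)^{m_j}$ gives $u=(f_i^{*}(A))^{m_i}\bigl(a(A)u\bigr)$, so $u$ lies in the image of $(f_i^{*}(A))^{m_i}$. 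Combined with the previous step this yields $\langle v,u\rangle=0$ for all $v\in V_i$, $u\in V_j$, i.e. $V_i\perp V_j$. The group case is word for word the same with $f_i^{*}$ replaced by $f_i^{\dagger}$.

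I expect the only genuine subtlety to be the bookkeeping in the group case: the adjoint of $f_i(A)$ is $A^{-n_i}f_i^{\dagger}(A)$, not $f_i^{\dagger}(A)$, and both discarding this extra power of $A$ and retaining the irreducibility of $f_i^{\dagger}$ rely on $0$ not being an eigenvalue of $A$, which is automatic for $A\in G$. The primary decomposition, the multiplicativity of $*$ and $\dagger$, and the Bézout argument are all routine.
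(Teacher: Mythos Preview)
Your proof is correct and follows essentially the same route as the paper: both start from $f_i(A)^{m_i}v=0$ for $v\in V_i$, pass to the adjoint using $f(A)^{*}=f^{*}(A)$ (resp.\ the $\dagger$ version), and then use that $f_i^{*}$ and $f_j$ are coprime to conclude that $f_i^{*}(A)^{m_i}$ acts surjectively on $V_j$. The paper simply phrases the last step as ``$f_i^{*}(A)^k V_j=V_j$'' rather than writing out B\'ezout, and handles the group case by saying it is analogous.
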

\begin{proof}
We give the proof in the $A\in\mathfrak{g}$ case (the $A\in G$ case is analogous). Take $u\in V_i, v\in V_j$. Then for $k$ large enough:
$$0=<f_i(A)^k u,v>=<u,f_i^*(A)^k v>.$$
Thus $V_i \perp f_i^*(A)^k V_j$, but if $f_i^* \neq\pm f_j$, then they are coprime to each other, and so $f_i^*(A)^k V_j=V_j$.
\end{proof}

\begin{definition} \label{block types}
\begin{enumerate}
    \item An operator $A\in\mathfrak{g}$ will be called a simple split operator (or block) if the following conditions hold:
    \begin{itemize}
        \item There is a possibly non-orthogonal decomposition $V=V'\oplus \overline{V'^*}$.
        \item $V'$ and $\overline{V'^*}$ are isotropic and the sesquilinear form $B$ induces the natural pairing between them.
        \item The action of $A$ on $V'\oplus \overline{V'^*}$ decomposes as $A'\oplus A''$.
        \item $<A'u,v>=<u,-A''v>$ for any $u\in V', v\in \overline{V'^*}$. 
        \item $A'$ (and so also $A''$) is a minimal regular operator (see Definition \ref{Regular element}).
        \item The irreducible factor $f$ of the minimal polynomial of $A'$ is not equal to $f^*$.
    \end{itemize}
    \item An operator $A\in\mathfrak{g}$ will be called a simple non-split operator (or block) if \begin{itemize}
        \item It is a minimal regular operator.
        \item Its characteristic polynomial is not equal to $x^d$ with $d$ even if $\mathfrak{g} = \mathfrak{o}$, and it is not equal to $x^d$ with $d$ odd if $\mathfrak{g} = \mathfrak{sp}$.
    \end{itemize}
    \item An operator $A\in\mathfrak{o}$ will be called a simple even nilpotent operator (or block) if the following conditions hold:
    \begin{itemize}
        \item Its minimal polynomial is $x^d$ for some even $d$.
        \item $V$ has a basis of the form $e, Ae, \dots, A^{d-1}e, f, Af, \dots, A^{d-1}f$.
        \item For all $i, j$ we have $<A^ie, A^je> = <A^if, A^jf> = 0$.
        \item For all $i, j$ we have 
        $<A^ie, A^jf> =
        \begin{cases}
        (-1)^{j} & \mathrm{if}\ i + j = d - 1\\
        0  & \mathrm{otherwise}
        \end{cases}
        $.
    \end{itemize}
    \item An operator $A\in\mathfrak{sp}$ will be called a simple odd nilpotent operator (or block) if the following conditions hold:
    \begin{itemize}
        \item Its minimal polynomial is $x^d$ for some odd $d$.
        \item $V$ has a basis of the form $e, Ae, \dots, A^{d-1}e, f, Af, \dots, A^{d-1}f$.
        \item For all $i, j$ we have $<A^ie, A^je> = <A^if, A^jf> = 0$.
        \item For all $i, j$ we have 
        $<A^ie, A^jf> =
        \begin{cases}
        (-1)^{j} & \mathrm{if}\ i + j = d - 1\\
        0  & \mathrm{otherwise}
        \end{cases}
        $.
    \end{itemize}
\end{enumerate}
\end{definition}


The following useful proposition will be proved in Appendix \ref{classifications}.
\begin{proposition}\label{full classification}
Each $A\in \mathfrak{u}$ decomposes as an orthogonal sum of simple split blocks and simple non-split blocks. Each $A\in \mathfrak{o}$ decomposes as an orthogonal sum of simple split blocks, simple non-split blocks, and simple even nilpotent blocks. Each $A\in \mathfrak{sp}$ decomposes as an orthogonal sum of simple split blocks, simple non-split blocks, and simple odd nilpotent blocks.
\end{proposition}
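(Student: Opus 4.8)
The plan is to build the orthogonal decomposition by a two-step reduction. First I would use the primary decomposition of $A$ to reduce to the case in which the characteristic polynomial of $A$ is a power of a single monic irreducible $f\in\KK[x]$; then I would peel off blocks one at a time by an orthogonal-complement argument, the governing dichotomy being whether $f$ is self-dual (meaning the ideals $(f)$ and $(f^*)$ coincide, with $f^*$ as in Definition \ref{f^*}) or not.

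For the first reduction, write $V=\bigoplus_{i\in I}V_i$, where $V_i$ is the generalized eigenspace of $A$ attached to the irreducible factor $f_i$ of the characteristic polynomial; each $V_i$ is $A$-invariant. Using the identity $f(A)^*=f^*(A)$ (the remark following Definition \ref{f^*}) together with Lemma \ref{eigenvalues perp}, one gets $V_i\perp V_j$ whenever $f_i^*\neq\pm f_j$, i.e. whenever $(f_i^*)\neq(f_j)$. Hence for an index $i$ with $(f_i^*)=(f_i)$ one has $V_i\perp V_j$ for all $j\neq i$, so that $V=V_i\perp\bigoplus_{j\neq i}V_j$ and $V_i$ is non-degenerate; the remaining indices fall into pairs $\{i,j\}$ with $(f_j)=(f_i^*)\neq(f_i)$, and for such a pair Lemma \ref{eigenvalues perp} forces $V_i$ and $V_j$ to be totally isotropic, $B$ to restrict to a perfect pairing between them, and $V_i\oplus V_j$ to be non-degenerate and orthogonal to everything else. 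It therefore suffices to treat (a) $V=V_i\oplus V_j$ a non-self-dual pair, and (b) $V=V_i$ with $f:=f_i$ self-dual.

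In case (a) I would apply the rational canonical form (Theorem \ref{Rational}) to $A|_{V_i}$ to write $V_i=\bigoplus_k U_k$ with each $U_k$ an $A$-cyclic subspace whose minimal polynomial equals its characteristic polynomial, a power of $f_i$; thus each $A|_{U_k}$ is minimal regular (Definition \ref{Regular element}). Put $U_k':=(\bigoplus_{l\neq k}U_l)^\perp\cap V_j$. Since the orthogonal complement of an $A$-invariant subspace is again $A$-invariant (using $A^*=-A$) and $V_j$ is $A$-invariant, $U_k'$ is $A$-invariant, $B$ pairs $U_k$ perfectly with $U_k'$, and $V=\bigoplus_k(U_k\oplus U_k')$ is an orthogonal decomposition. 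Each summand $U_k\oplus U_k'$ satisfies every requirement of a simple split block in Definition \ref{block types}: $U_k$ and $U_k'$ are isotropic and dually paired by $B$, $A$ acts as $A'\oplus A''$ with $B(A'u,v)=B(u,-A''v)$ (which is exactly $A^*=-A$), $A'$ is minimal regular, and its irreducible factor $f_i$ satisfies $f_i\neq f_i^*$ by the choice of case.

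Case (b) is the crux, and I would treat it by induction on $\dim V$: a non-degenerate $V$ carrying $A\in\mathfrak{g}$ whose characteristic polynomial is a power of a self-dual irreducible $f$ is an orthogonal sum of $A$-cyclic subspaces, except that when $f=x$ a parity obstruction can force some indecomposable summands to be ``doubled'' — two equal nilpotent Jordan blocks glued by a hyperbolic-type pairing, i.e. the simple even or odd nilpotent blocks of Definition \ref{block types}. Let $d$ be the top exponent in the rational canonical form of $A$ and $v$ a vector of $A$-order $f^d$, so $C:=\KK[x]v\cong\KK[x]/(f^d)$. Since the socle of $C$ (the image $f(A)^{d-1}C$, one-dimensional over the field $\KK[x]/(f)$) is the unique minimal $A$-submodule of $C$, the restriction of $B$ to $C$ is non-degenerate precisely when $B$ does not annihilate this socle, and by $A^*=-A$ and $f^*=\pm f$ that reduces to the non-vanishing of an explicit invariant attached to $v$, built from $B(v,f(A)^{d-1}v)$ and the sign in $f^*=\pm f$; a polarization argument (where $\mathrm{char}\,\FF\neq 2$ is used) then shows this invariant can be made nonzero for a suitable $v$ of order $f^d$, unless the symmetry relating $B(v,f(A)^{d-1}v)$ to $B(f(A)^{d-1}v,v)$ admits no nonzero solution — which happens exactly for $\mathfrak{g}=\mathfrak{o}$ with $f=x$ and $d$ even, and for $\mathfrak{g}=\mathfrak{sp}$ with $f=x$ and $d$ odd, and never in the Hermitian case, where $\KK$ contains nonzero elements $b$ with $\bar b=-b$. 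In the unobstructed case one splits off the non-degenerate $A$-cyclic subspace $C$ and inducts on its (automatically non-degenerate, $A$-invariant) orthogonal complement, producing simple non-split blocks whose characteristic polynomial is never of the excluded parity, precisely by the obstruction just described. In the obstructed case one instead uses non-degeneracy of $B$ to find a companion $w$ with $B(A^{d-1}v,w)\neq 0$ (which forces $w$ to have order $x^d$ as well), and, after finitely many Gram--Schmidt-type corrections to $w$, realizes the model basis of a simple even (resp. odd) nilpotent block, which one splits off before inducting on its orthogonal complement. The main obstacle is exactly this step — guaranteeing that one can always peel off a non-degenerate, $A$-invariant indecomposable summand and pinning down the parity obstruction that produces the doubled blocks — whereas the generalized-eigenspace reduction and case (a) are routine; one also checks along the way that nothing uses separability of the residue extensions $\KK[x]/(f_i)$, so the argument is valid in odd characteristic exactly as in characteristic $0$, the characteristic entering only through the harmless hypothesis $\mathrm{char}\,\FF\neq 2$.
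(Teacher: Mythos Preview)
Your overall architecture --- primary decomposition via Lemma \ref{eigenvalues perp}, then peeling off indecomposable summands with the nilpotent parity obstruction --- is sound and parallels the paper, though the paper inserts an intermediate step you skip: it first reduces (Proposition \ref{second classification}) to \emph{homogeneous} operators (all rational blocks of equal size), which makes the induced form on $V/f(A)V$ genuinely non-degenerate before one looks for non-isotropic vectors. Your direct approach can be made to work, but as written it has a real gap in case (b) when $\mathfrak{g}=\mathfrak{sp}$ and $f\neq x$. You test non-degeneracy of $B|_C$ via the single scalar $B(v,f(A)^{d-1}v)$ and assert that the only obstruction to realizing it nonzero is the stated parity. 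But for $\mathfrak{sp}$ with $f\neq x$ one has $f^*=f$ (since $\deg f$ is then even), hence $(f(A)^{d-1})^*=f(A)^{d-1}$, and so
\[
B(f(A)^{d-1}v,v)=B(v,f(A)^{d-1}v)=-B(f(A)^{d-1}v,v),
\]
forcing this scalar to vanish for \emph{every} $v$ and every $d$. Your criterion therefore mislabels the entire symplectic non-nilpotent self-dual case as obstructed. The correct test is non-vanishing of the functional $p\mapsto B(p(A)f(A)^{d-1}v,v)$ on $\KK[x]/(f)$; concretely, for $\mathfrak{sp}$ with $f\neq x$ one must use $B(Af(A)^{d-1}v,v)$, which \emph{is} a symmetric form in $v$ to which the polarization argument applies (and $A$ is invertible here, so its non-vanishing yields $f(A)^{d-1}\neq 0$). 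The paper does exactly this: in the symplectic non-nilpotent case it defines the auxiliary form on $V/f(A)V$ by $<u,v>_U:=<Af(A)^{d-1}u,v>$ rather than $<f(A)^{d-1}u,v>$.

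A smaller omission: in the obstructed nilpotent case, Gram--Schmidt corrections to $w$ alone do not suffice. One also needs $<A^j v,v>=0$ for all $j$, and for $j$ of the non-forced parity below $d-1$ this is not automatic; the paper corrects $v$ as well, and those corrections use $w$, so the two adjustments are intertwined rather than sequential.
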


\begin{remark}
The contents of Proposition \ref{full classification} are contained in known papers and books such as \cite{BCCISS} and \cite{Wall}. However, for clarity and completeness, we formulated only the propositions we need and give short proofs of them in Appendix \ref{classifications}.
\end{remark}

\begin{proposition}[{\cite[Appendix A]{GL case}}]\label{matrix determinant}
Let $V$ be a linear space of finite dimension $n$ over a field $\KK$, $A\in \mathrm{gl} (V)$, $v\in V$, and $\phi \in V^*$. The following are equivalent.
\begin{enumerate}
    \item $\forall k\geq 0,\ \phi A^k v=0$.
    \item For all $\lambda\in\KK$, $\mathrm{ch}(A+\lambda v\otimes \phi)=\mathrm{ch}(A)$.
    \item There exists $\lambda \in \FF ^ \times$ such that $\mathrm{ch}(A+\lambda v\otimes \phi)=\mathrm{ch}(A)$.
    \item $\frac{\partial}{\partial \lambda}\mathrm{ch}(A+\lambda v\otimes\phi)|_{\lambda=0}=0.$
\end{enumerate}
\end{proposition}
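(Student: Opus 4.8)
The plan is to prove the cycle of implications $(1)\Rightarrow(2)\Rightarrow(4)\Rightarrow(3)$, together with $(3)\Rightarrow(1)$, which closes the loop (and $(2)\Rightarrow(3)$ is trivial). The key computational input is an explicit formula for how the characteristic polynomial of a rank-one perturbation changes. Writing $\mathrm{ch}(A) = \det(xI - A)$, one has the classical identity
\[
\det\bigl(xI - A - \lambda\, v\otimes\phi\bigr) = \det(xI-A)\cdot\Bigl(1 - \lambda\,\phi\,(xI-A)^{-1}v\Bigr),
\]
valid as an identity of rational functions in $x$ (and polynomial in $\lambda$), by the matrix determinant lemma applied over the field $\KK(x)$. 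The quantity $\phi(xI-A)^{-1}v$ expands, via the Neumann series $(xI-A)^{-1} = \sum_{k\ge 0} x^{-k-1} A^k$, into the formal Laurent series $\sum_{k\ge 0} (\phi A^k v)\, x^{-k-1}$; since $A$ is a finite matrix this series in fact represents a rational function $p(x)/\mathrm{ch}(A)(x)$ with $\deg p < n$. So the whole perturbation term is $\lambda\, p(x)$ with $p$ a polynomial of degree $<n$ whose coefficients are linear in the numbers $\phi A^k v$.

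From this the implications are short. For $(1)\Rightarrow(2)$: if all $\phi A^k v = 0$ then $p \equiv 0$, so $\mathrm{ch}(A+\lambda v\otimes\phi) = \mathrm{ch}(A)$ identically in $\lambda$. For $(2)\Rightarrow(4)$: differentiating at $\lambda = 0$ is immediate. For $(4)\Rightarrow(3)$ I would argue as follows: $\frac{\partial}{\partial\lambda}\mathrm{ch}(A+\lambda v\otimes\phi) = -\mathrm{ch}(A)(x)\,p(x) + \lambda(\cdots)$ as a polynomial in $\lambda$ with coefficients in $\KK[x]$, so condition $(4)$ forces $p\equiv 0$, whence again $\mathrm{ch}(A+\lambda v\otimes\phi) = \mathrm{ch}(A)$ for \emph{every} $\lambda$, in particular for some $\lambda\in\FF^\times$; this also gives $(3)$ directly and makes $(2)\Leftrightarrow(4)$. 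Finally $(3)\Rightarrow(1)$: if $\mathrm{ch}(A + \lambda_0 v\otimes\phi) = \mathrm{ch}(A)$ for a single $\lambda_0\ne 0$, then $\lambda_0\, p(x) = 0$ as a polynomial, so $p\equiv 0$, so every $\phi A^k v = 0$.

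The one genuine subtlety — the main obstacle — is justifying the determinant identity and the series expansion cleanly over a general field $\KK$ (we do not even assume $\KK$ infinite in principle, and certainly not algebraically closed), rather than invoking analytic facts. The right framework is to work in the rational function field $\KK(x)$: there $xI - A$ is invertible (its determinant $\mathrm{ch}(A)(x)$ is a nonzero polynomial), the matrix determinant lemma $\det(M + v\otimes\phi) = \det(M)(1 + \phi M^{-1}v)$ holds over any commutative ring where $\det(M)$ is a unit, and the identity $(xI-A)^{-1} = \sum_{k=0}^{n-1} c_k(x) A^k$ with $c_k(x)\in\KK(x)$ follows from Cayley--Hamilton (it expresses $(xI-A)^{-1}$ as $\mathrm{ch}(A)(x)^{-1}$ times the adjugate, which is a polynomial in $A$ and $x$). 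Matching this against the Laurent expansion at $x=\infty$ then identifies the coefficients of the numerator $p$ as $\KK$-linear combinations of the $\phi A^k v$, and moreover shows that $p \equiv 0$ in $\KK[x]$ if and only if $\phi A^k v = 0$ for $0 \le k \le n-1$, which by Cayley--Hamilton is equivalent to $\phi A^k v = 0$ for all $k \ge 0$. With that linear-algebra bookkeeping in place, all four equivalences follow formally, uniformly in the characteristic and with no hypothesis on $\KK$ beyond being a field.
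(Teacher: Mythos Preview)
The paper does not prove this proposition; it is merely quoted from \cite[Appendix A]{GL case} (the author's earlier paper), so there is no in-paper proof to compare against. Your argument via the matrix determinant lemma over $\KK(x)$ is correct and is exactly the natural approach suggested by the label ``matrix determinant'' the paper gives this proposition.

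One small clean-up: the expression $\mathrm{ch}(A+\lambda v\otimes\phi)=\mathrm{ch}(A)(x)-\lambda\,p(x)$ is already \emph{linear} in $\lambda$ (a rank-one perturbation contributes a single $\lambda$-term to the determinant), so $\frac{\partial}{\partial\lambda}\mathrm{ch}(A+\lambda v\otimes\phi)=-p(x)$ identically, not $-\mathrm{ch}(A)(x)\,p(x)+\lambda(\cdots)$ as you wrote. This actually simplifies your $(4)\Rightarrow(3)$ step: condition (4) says $p\equiv 0$, hence $\mathrm{ch}(A+\lambda v\otimes\phi)=\mathrm{ch}(A)$ for every $\lambda$, and all four conditions collapse to $p\equiv 0$. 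Your identification of $p\equiv 0$ with $\phi A^kv=0$ for all $k$ via the Laurent expansion at $x=\infty$ (or, equivalently, the adjugate/Cayley--Hamilton argument) is correct and is the substantive point.
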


Let $V$ be a vector space over a local field $\FF$ of characteristic different from $2$. Let $\operatorname{GL}(V)$ act on $\mathfrak{gl}(V) \times V \times V^*$ by conjugation, and the natural actions on $V, V^*$. Consider also the transposition involution, which involves a choice of an isomorphism $t:V\to V^*$, and sends $(A, v, \phi)$ to $A^t, \phi^t, v^t$. As immediate corollaries of \cite[Theorem 3.1]{GL case} (and of the proof that it implies Theorem 1.1 of the same paper) we have:
\begin{theorem} \label{GL theorem}
Any $\operatorname{GL}(V)$-invariant distribution on $GL(V) \times V \times V^*$ is also invariant to transposition.
\end{theorem}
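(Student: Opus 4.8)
The plan is to derive this from the linear (Lie algebra) statement \cite[Theorem 3.1]{GL case}, which gives the transposition-invariance of $\operatorname{GL}(V)$-invariant distributions on $\mathfrak{gl}(V)\times V\times V^*$, together with the linearization procedure by which \cite{GL case} passes from that result to its multiplicity one theorem: the theorem is recorded here as a corollary precisely because the group version on $\operatorname{GL}(V)\times V\times V^*$ is obtained from the Lie algebra version by exactly that passage. I only indicate the mechanism.

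Write $\theta$ for the transposition involution (which depends on the chosen isomorphism $t\colon V\to V^*$) and, abusing notation, also for the automorphism of $\operatorname{GL}(V)$ it induces; put $\widetilde{G}=\operatorname{GL}(V)\rtimes\{1,\theta\}$ with sign character $\chi$. For a $\operatorname{GL}(V)$-invariant distribution $\xi$ the difference $\eta:=\xi-\theta_*\xi$ is $(\widetilde{G},\chi)$-equivariant and vanishes exactly when $\xi$ is $\theta$-invariant, so it suffices to prove $\Sc^*(\operatorname{GL}(V)\times V\times V^*)^{\widetilde{G},\chi}=0$. The characteristic-polynomial map $\Delta\colon\operatorname{GL}(V)\times V\times V^*\to\KK[x]$ (the projection onto $\operatorname{GL}(V)$ followed by $\mathrm{ch}$) is $\widetilde{G}$-invariant, so by Corollary \ref{LocPrinCor} it is enough to show $\Sc^*(\Delta^{-1}(p))^{\widetilde{G},\chi}=0$ for every $p\in\KK[x]$, which is automatically of nonzero constant term. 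Using the rational canonical form (Theorem \ref{Rational}) together with the product statement (Corollary \ref{Product}), one reduces to the case where $p$ is a power of a single irreducible polynomial.

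On such a fibre I would run the same (weak) Harish-Chandra descent and Cayley transform as in \cite{GL case}: descend along the semisimple part via Frobenius descent (Theorem \ref{Frob}) to the centralizer of a semisimple element, and apply the Cayley transform to the unipotent part, obtaining a $\operatorname{GL}(V)$-equivariant identification of (a piece of) the fibre with a locally closed subset of $\mathfrak{gl}(V)\times V\times V^*$ intertwining conjugation with the adjoint action and $g\mapsto g^t$ with $A\mapsto A^t$. Pushing $\eta$ forward and applying \cite[Theorem 3.1]{GL case} gives the vanishing on each fibre, and reassembling through Theorem \ref{LocPrin} finishes the proof; the compatibility of all the descent steps with the Fourier transform, needed because \cite[Theorem 3.1]{GL case} is proven by Fourier-analytic methods, is guaranteed by Theorem \ref{Frob}(iii). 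The one genuinely delicate point, and the sole place where positive characteristic intervenes, is that the eigenspace decomposition underlying this descent may fail to be defined over $\KK$ when the relevant irreducible factor of $p$ is inseparable, so that the descent to centralizers is weaker than over characteristic $0$ — this is the difficulty discussed in Section \ref{HC dec}. For $\operatorname{GL}$ it has already been resolved in \cite{GL case}, so in the present argument it appears only as an invocation of that work and nothing new is required.
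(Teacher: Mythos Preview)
The paper does not give an independent proof of this statement: it records Theorem \ref{GL theorem} (and Theorem \ref{GL theorem Lie-alg}) as ``immediate corollaries of \cite[Theorem 3.1]{GL case} (and of the proof that it implies Theorem 1.1 of the same paper)'', with no further argument. Your proposal correctly identifies this and sketches precisely the linearization mechanism that the citation refers to, so your approach is the same as the paper's --- only more explicit.
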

\begin{theorem} \label{GL theorem Lie-alg}
Any $\operatorname{GL}(V)$-invariant distribution on $\mathfrak{gl}(V) \times V \times V^*$ is also invariant to transposition.
\end{theorem}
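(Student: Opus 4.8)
The quickest route is to observe that this statement is, up to reformulation, \cite[Theorem~3.1]{GL case}; for completeness, though, I would instead deduce it from the group version, Theorem~\ref{GL theorem}, which is already in hand. Write $Z=\mathfrak{gl}(V)\times V\times V^*$ and let $\theta$ denote the transposition involution $(A,v,\phi)\mapsto(A^t,\phi^t,v^t)$. It commutes with the $\operatorname{GL}(V)$-action, and it covers the identity on the space of characteristic polynomials since $\mathrm{ch}(A^t)=\mathrm{ch}(A)$. Given $\xi\in\Sc^*(Z)^{\operatorname{GL}(V)}$, the goal is to show $\theta\xi=\xi$.

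First I would invoke the localization principle (Theorem~\ref{LocPrin}) for the continuous, $\operatorname{GL}(V)$- and $\theta$-invariant map $q:Z\to\KK[x]$, $q(A,v,\phi)=\mathrm{ch}(A)$: since $\Sc^*(Z)^{\operatorname{GL}(V)}$ is a closed, $\theta$-stable $\Sc(\KK[x])$-submodule of $\Sc^*(Z)$ and $\theta$ is continuous, it suffices to show that every $\operatorname{GL}(V)$-invariant distribution supported on a single fiber $Z_p=\{A:\mathrm{ch}(A)=p\}\times V\times V^*$ is $\theta$-invariant. Fixing $p$, I would pick — possible since $\FF$ is infinite — a scalar $\mu\in\FF$ such that neither $\mu+1$ nor $\mu-1$ is a root of $p$, and use the shifted Cayley transform $c_\mu(A)=(1+A-\mu)(1-A+\mu)^{-1}$. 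For this choice $c_\mu$ is defined and invertible-valued on all of $\{A:\mathrm{ch}(A)=p\}$ and restricts to a homeomorphism of that set onto $\{g\in\operatorname{GL}(V):\mathrm{ch}(g)=\tilde p\}$ for the polynomial $\tilde p$ whose roots are the $c_\mu$-images of the roots of $p$; moreover, every matrix occurring in the formula is a polynomial in $A$, so $c_\mu$ commutes with $\operatorname{GL}(V)$-conjugation and with $(\cdot)^t$. Hence $c_\mu\times\mathrm{id}\times\mathrm{id}$ is a $\operatorname{GL}(V)$- and $\theta$-equivariant isomorphism of $l$-spaces from $Z_p$ onto the closed subset $C=\{g:\mathrm{ch}(g)=\tilde p\}\times V\times V^*$ of $\operatorname{GL}(V)\times V\times V^*$. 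A $\operatorname{GL}(V)$-invariant distribution on $Z_p$ therefore transports to a $\operatorname{GL}(V)$-invariant distribution on $C$, which — being supported on a closed set — is a $\operatorname{GL}(V)$-invariant distribution on $\operatorname{GL}(V)\times V\times V^*$; by Theorem~\ref{GL theorem} it is $\theta$-invariant, and transporting back finishes the argument.

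The one substantial ingredient is Theorem~\ref{GL theorem}, and that is where I expect the real work to lie; it is carried out in \cite{GL case}. Its proof follows the Harish-Chandra descent plus Fourier-transform strategy of \cite{AGRS}, but in positive characteristic the customary reduction to the nilpotent cone breaks down when an irreducible factor of $\mathrm{ch}(A)$ is inseparable, and the new ingredient of \cite{GL case} — exploiting the rank-one perturbations $v\otimes\phi$ through the determinant criterion of Proposition~\ref{matrix determinant} — is precisely what repairs this. So the substance of both Theorem~\ref{GL theorem} and Theorem~\ref{GL theorem Lie-alg} is concentrated in \cite[Theorem~3.1]{GL case}, while the argument above is only the elementary transfer between the group and the Lie algebra; I do not foresee any further obstacle.
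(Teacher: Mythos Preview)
Your opening sentence is exactly how the paper handles it: Theorem~\ref{GL theorem Lie-alg} is stated as an immediate corollary of \cite[Theorem~3.1]{GL case}, with no further argument given. So on that point you and the paper agree.

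Your alternative derivation from the group version via a shifted Cayley transform is correct and pleasant, but it reverses the logical direction used in \cite{GL case}: there Theorem~3.1 is the Lie-algebra statement, and the group statement (your Theorem~\ref{GL theorem}) is obtained \emph{from} it, not the other way around. So in effect you are passing from $\mathfrak{gl}$ to $\mathrm{GL}$ and then invoking a result whose proof goes back through $\mathfrak{gl}$; nothing is wrong with this, but it is a detour rather than an independent route. The one genuine advantage of writing it out is pedagogical: it makes explicit that the group and Lie-algebra formulations are interchangeable fiberwise over the characteristic-polynomial map, via a Cayley transform whose shift $\mu$ is chosen per fiber. If you keep the alternative argument, you might remark that this is the same linearization mechanism used later (Section~\ref{lie-alg}) in the orthogonal/unitary/symplectic setting.
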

\section{Reformulations of the problem}\label{reformulations}
Let $V, G, \tG, \chi$ be as in Section \ref{preliminaries}.
Both Theorem \ref{G(V), G(W) dist} and Theorem \ref{J dist} follow from the following theorem:
\begin{theorem} \label{main goal}
Any $(\tG, \chi)$-equivariant distribution on $G\times V$ is 0.
\end{theorem}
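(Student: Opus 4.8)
The plan is to prove Theorem~\ref{main goal} by a combination of Harish-Chandra style descent (adapted to positive characteristic, so yielding only a partial restriction on the support), linearization via the Cayley transform, a stratification of the resulting Lie algebra problem, and finally a direct analysis on a single orbit, following the overall architecture of \cite{AGRS}, \cite{Wald}, \cite{Sun} but injecting the new ideas of \cite{GL case} at the points where separability is used.

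First I would reformulate the statement. A $(\tG,\chi)$-equivariant distribution on $G\times V$ is the same as a $G$-invariant distribution on $G\times V$ killed by the antiautomorphism coming from any element of $\tG\setminus G$; since $\tG/G$ has order $2$ and acts via some $\sigma$, it suffices to show every $G$-invariant distribution on $G\times V$ is $\sigma$-invariant, which is exactly what the Gelfand--Kazhdan machinery needs. Using the characteristic polynomial map $\Delta:G\times V\to\KK[x]$ and the Localization Principle (Theorem~\ref{LocPrin}) together with Corollary~\ref{LocPrinCor}, I reduce to studying, for each monic polynomial $p$, the fiber $\Delta^{-1}(p)$. On such a fiber the semisimple part of the $G$-component is essentially fixed, so Frobenius descent (Theorem~\ref{Frob}) along the orbit of a semisimple element lets me replace $G$ by the centralizer of a semisimple element and $V$ by a smaller space; iterating, I reduce to the situation where the relevant operator is ``close to'' having a single generalized eigenvalue. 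The key departure from characteristic zero is that over an imperfect field one cannot peel off all semisimple parts — nonseparable irreducible factors obstruct full descent — so after this step one is left not at the nilpotent cone but at a coarser locus; this weaker reduction is exactly the one used in \cite{GL case}, and I would carry it out uniformly for $\mathfrak o,\mathfrak u,\mathfrak{sp}$ using Lemma~\ref{eigenvalues perp} to get orthogonal decompositions according to the (pairs of) irreducible factors $f_i$ with $f_i^*=\pm f_j$.

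Next I linearize: the Cayley transform $c(g)=(1-g)(1+g)^{-1}$ carries a neighborhood of the support (after the descent above the support is constrained away from the locus where $1+g$ is singular, or one handles that locus separately by a further descent) to the Lie algebra $\mathfrak g$, intertwining conjugation with the adjoint action and $\sigma$ with the corresponding Lie-algebra involution, so Theorem~\ref{main goal} is reduced to the analogous statement for $(\tG,\chi)$-equivariant distributions on $\mathfrak g\times V$. On the Lie algebra I apply Proposition~\ref{full classification}: every $A\in\mathfrak g$ decomposes orthogonally into simple split, simple non-split, and (for $\mathfrak o,\mathfrak{sp}$) simple even/odd nilpotent blocks. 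The split blocks are the crucial new input — on a split block the form induces a perfect pairing $V'\oplus\overline{V'^*}$ and the problem there becomes a $\mathrm{GL}$-problem of the shape handled by Theorems~\ref{GL theorem} and \ref{GL theorem Lie-alg}; this is where I would introduce, adapting the family $\rho$ of automorphisms from \cite{GL case} to the unitary, orthogonal and symplectic settings (this is the content of the section on automorphisms), the extra symmetries needed to conclude $\sigma$-invariance of distributions supported on such strata. Using Corollary~\ref{Product} I can treat a direct sum of blocks one block at a time.

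Then I run the stratification argument. Having reduced to $\mathfrak g\times V$ and fixed the block type of the semisimple-ish part, I stratify the remaining space by orbit type and argue by Noetherian induction on the strata: an equivariant distribution supported on the union of strata of dimension $\le d$ is controlled by its restriction to the open stratum of that dimension (a single orbit, up to the action of a reductive centralizer) plus a distribution supported on lower strata; the inductive hypothesis handles the latter and Frobenius descent reduces the former to a point, i.e.\ to an equivariant distribution on the vector space $V$ (or a twist of it) with respect to the centralizer. This is essentially verbatim from \cite{AGRS},\cite{Wald},\cite{Sun}, with the single change that the nilpotency constraint present there is dropped — which, as in \cite{GL case}, turns out not to be needed. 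Finally, the single-orbit / single-stratum analysis: here one must show a distribution on a vector space equivariant under a specified reductive group and twisted by a sign character vanishes, which is done by the ``either the support shrinks or the Fourier transform has the same support'' trick of \cite{AGRS} (using that Frobenius descent commutes with Fourier transform, Theorem~\ref{Frob}(iii)), combined with the homogeneity/degree argument, plus the split-block reduction to $\mathrm{GL}$ via Theorems~\ref{GL theorem},~\ref{GL theorem Lie-alg}. I expect the main obstacle to be the automorphisms section — constructing, for each of the split-block types over $\mathfrak o$, $\mathfrak u$, $\mathfrak{sp}$, the analogue of the $\rho$-automorphisms of \cite{GL case} that simultaneously respect the sesquilinear form and supply enough extra equivariance to defeat the otherwise-surviving distributions — since the character-zero proofs here rely on separable descent that is simply unavailable, and everything downstream depends on getting these automorphisms right.
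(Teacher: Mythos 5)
Your overall architecture --- localization along the characteristic-polynomial map, a weakened Harish-Chandra descent, Cayley-transform linearization, automorphisms in the style of \cite{GL case}, stratification by orbit dimension, and a Fourier/Frobenius analysis on a single orbit, all inside an induction on $\dim V$ --- is indeed the paper's. But the load-bearing new ingredient is aimed at the wrong target, and by your own account everything downstream depends on it. In the paper, simple split blocks never reach the block-analysis stage: Theorems \ref{HC dec 1} and \ref{HC dec 2} already force the characteristic polynomial on the support to be a power of a single irreducible $f$ with $f=\pm f^*$, and the $GL$-input (Theorem \ref{GL theorem Lie-alg}) is consumed \emph{inside} that descent, via Frobenius descent along the space of decompositions $V=V_1\oplus V_2$ into the two isotropic generalized eigenspaces attached to a coprime factorization $g_1\cdot(\pm g_1^*)$. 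The new family $\rho_g:(A,v)\mapsto (A,g(A)v)$ has a different job: on a fixed fiber of $\Delta$ it converts the single vanishing $\langle v,v\rangle=0$ of Lemma \ref{v perp} into $\langle A^k v,v\rangle=0$ for all $k$ (Proposition \ref{inside R}). That support condition is what makes the translations $\nu_\lambda$, $\mu_\lambda$ preserve the characteristic polynomial (Proposition \ref{matrix determinant}), which is what the stratification actually uses: the downward induction over orbit dimensions shows that the restriction to an open orbit and its partial Fourier transform are supported in $\tO$, hence $v\in Q_A$ (Proposition \ref{inside Q}, Theorem \ref{QA-RA}), and only then does the shift-invariance analysis of the non-split and nilpotent blocks close Claim \ref{orbit}. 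Directing the $\rho$-automorphisms at split strata leaves unexplained how you would obtain the $Q_A$-support conditions your single-orbit step needs.

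Two further places where your plan follows the characteristic-zero template that breaks here. Descent ``along the orbit of a semisimple element, replacing $G$ by its centralizer'' is precisely the step obstructed by inseparability; the paper instead descends along the variety of orthogonal (or isotropic) decompositions of $V$ dictated by a coprime factorization of the characteristic polynomial and invokes the simultaneous induction on $\dim V$ for the smaller groups $G(V_i)$ (together with Theorem \ref{GL theorem Lie-alg}); the centralizer-of-a-semisimple-element descent is used only once, in Theorem \ref{HC dec 3} for $\mathfrak{sp}$ with a separable nonlinear factor, where the centralizer is a unitary group over $\FF[T]/f(T)$. Finally, your single-orbit step tacitly assumes that Frobenius descent produces a $\chi$-twisted problem for the centralizer; this requires an element of $\tC_A\setminus C_A$ for every $A$ (Lemma \ref{tc}, via \cite{MVW}, with extra care about determinants in the $SO$ case, Appendix \ref{SO centralizer}), an ingredient absent from your outline. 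Note also that the block analysis does not show the descended distribution vanishes, only that it is invariant under a suitable $s\in\tC_A\setminus C_A$ (e.g.\ it can be a Lebesgue measure), so ``vanishes by a homogeneity/degree argument'' is not quite the right expectation there.
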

\begin{proof}[Proof that Theorem \ref{main goal} implies Theorem \ref{G(V), G(W) dist}]
This proof is the same as the proof of \cite[Proposition 5.1]{AGRS}.
Use the notations given in the introduction (e.g. $V,W,G(V),\tG(V)$).
We actually prove that Theorem \ref{main goal} for $W$ implies Theorem \ref{G(V), G(W) dist} for $V$.
The idea is to consider the set
$$Y:=\{w\in W | <w,w> = <v_{n+1},v_{n+1}> \}.$$
and use Frobenius descent (Theorem \ref{Frob}) on the projection $G(W)\times Y \to Y$. The group $\tG(W)$ Acts on $Y$ with centralizer $\tG(V)$. The fiber of the projection is $G(W)$. So we get a bijection between $(\tG(V), \chi)$-equivariant distributions on $G(W)$ and $(\tG(W), \chi)$-equivariant distributions on $G(W)\times Y$. The latter of which has no non-zero elements by the assumption on $W$. Thus any $(\tG(V), \chi)$-equivariant distribution on $G(W)$ is 0, which immediately implies Theorem \ref{G(V), G(W) dist}.
\end{proof}
\begin{proof}[Proof that Theorem \ref{main goal} implies Theorem \ref{J dist}]
The group $S$ in the formulation of Theorem \ref{J dist} decomposes as a product of groups of types $O, SO, U, Sp$, with $\sigma$ a product of appropriate anti-involutions. By Corollary \ref{LocPrinCor} of the localization principle, it follows that Theorem \ref{main goal} implies an analogous claim for $S$, which can be seen to easily imply Theorem \ref{J dist}.
\end{proof}
The proof of the theorem is by induction on $\dim V$, proving simultaneously the following theorem:
\begin{theorem}\label{main goal for Lie Alg}
Any $(\tG, \chi)$-equivariant distribution on $\mathfrak{g}\times V$ is 0.
\end{theorem}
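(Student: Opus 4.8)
The plan is to prove Theorems \ref{main goal} and \ref{main goal for Lie Alg} simultaneously by induction on $\dim V$, following the architecture of \cite{AGRS}, \cite{Wald} and \cite{Sun} while grafting on the positive-characteristic innovations of \cite{GL case}. The case $\dim V\le 1$ is immediate. For the inductive step, suppose $\xi$ is a nonzero $(\tG,\chi)$-equivariant distribution --- on $G\times V$ in one statement, on $\mathfrak{g}\times V$ in the other --- and aim for a contradiction. The two statements feed each other: the Cayley transform identifies a $G$-invariant neighbourhood of $1$ in $G$ with one of $0$ in $\mathfrak{g}$ compatibly with the $\tG$-action and with $\sigma$, so the group statement near $1$ reduces to the Lie-algebra statement near $0$ (with a shifted variant handling the eigenvalue $-1$), while elsewhere descent to smaller groups brings in the inductive hypothesis for either statement.

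First I would carry out a Harish-Chandra-type descent. Composing with the characteristic-polynomial map $\Delta$ of Section \ref{preliminaries} and applying the localization principle (Corollary \ref{LocPrinCor}), it is enough to treat the fibres. Over a fibre, decompose $V$ into the generalized eigenspaces $V_i$ of $A$ (or $g$) attached to the irreducible factors $f_i$ of $\Delta(A)$; by Lemma \ref{eigenvalues perp} these organize into mutually orthogonal clusters, each of which is either a non-degenerate subspace carrying an operator of a smaller classical group, or a hyperbolic pair $V_i\oplus\overline{V_i^{*}}$ on which the problem is a $GL$-problem. The $GL$-type clusters are killed by Theorems \ref{GL theorem} and \ref{GL theorem Lie-alg}; a proper classical cluster is killed by Corollary \ref{Product}, Frobenius descent (Theorem \ref{Frob}), and induction, the interaction with the $V$-direction being controlled by Proposition \ref{matrix determinant}. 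This is where positive characteristic bites: transverse slicing at a \emph{separable} semisimple part works as in characteristic zero, but at an \emph{inseparable} one the centralizer is no longer of the expected shape and the descent breaks down, so this step only forces $\xi$ to be supported on pairs $(A,v)$ for which $\Delta(A)$ is a power of a single irreducible polynomial which is either $x$ (the nilpotent locus; for the group, the unipotent locus up to sign) or inseparable --- a possibility that is vacuous over characteristic zero but not over characteristic $p$.

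To close the gap I would adapt the device of \cite{GL case}: a family $\rho$ of automorphisms of $\mathfrak{g}\times V$ and of $G\times V$, compatible with the relevant anti-involution and with the $\tG$-action up to the character $\chi$, under which the weak support statement above is transported onto the genuine nilpotent cone $\mathcal{N}_{\mathfrak{g}}\times V$ (resp.\ the unipotent locus). From here the proof returns to the classical track: apply the Fourier transform $\Fou$ in the $\mathfrak{g}$-variable (with respect to the trace form, resp.\ its $\FF$-trace in the Hermitian case) and in the $V$-variable with respect to $B$; since $\Fou\xi$ is again $(\tG,\chi')$-equivariant for the appropriate twisted character, the same reductions show it too is supported on the same small conical set. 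Then stratify the nilpotent cone by $G$-orbits: if $\xi\neq 0$, pick an orbit $\mathcal{O}$ open in the support, restrict to the open locus where only $\mathcal{O}$ occurs, and use Frobenius descent over $\mathcal{O}$ to reduce to a $(\operatorname{Stab}_{\tG},\chi)$-equivariant distribution on the fibre. On this slice, in place of $\mathfrak{sl}_2$-theory (which can fail in small characteristic) one reads off the needed scaling from the explicit normal forms of Proposition \ref{full classification}; combined with the two Fourier-dual support constraints this forces incompatible homogeneity degrees, hence the slice distribution --- and so $\xi$ near $\mathcal{O}$ --- vanishes, contradicting the maximality of $\mathcal{O}$ and completing the induction.

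The main obstacle is exactly the breakdown of Harish-Chandra descent at inseparable semisimple elements, and, correspondingly, the construction of the automorphism family $\rho$ together with the proof that it has the compatibility with $\sigma$, with the $\tG$-action, and with $\Fou$ needed to transport the weak support conclusion down to the nilpotent cone; this is the step with no characteristic-zero counterpart. A secondary difficulty is running the single-orbit analysis of \cite{AGRS}, \cite{Wald} and \cite{Sun} without $\mathfrak{sl}_2$-triples, which is handled using the block classification of Proposition \ref{full classification}.
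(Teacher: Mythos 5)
Your overall architecture (simultaneous induction on $\dim V$, localization over the characteristic polynomial, the $GL$-theorems for the split clusters, Frobenius descent to smaller classical groups, the Cayley transform, and the replacement of $\mathfrak{sl}_2$-theory by explicit block normal forms) matches the paper, and you correctly identify inseparability as the point where Harish-Chandra descent breaks. But there is a genuine gap at the centre of your plan: the step in which the family $\rho$ is supposed to ``transport'' the weak support statement onto the nilpotent cone (resp.\ unipotent locus). No such transport exists, and it is not what the automorphisms do. All the automorphisms in play --- $\nu_\lambda$, $\mu_\lambda$ and $\rho_g$ --- preserve the characteristic polynomial of $A$ on the relevant support (Proposition \ref{matrix determinant}; $\rho_g(A,v)=(A,g(A)v)$ does not touch $A$ at all), so they act fibrewise over $\Delta$ and can never move a point whose characteristic polynomial is a power of an inseparable irreducible polynomial into the nilpotent cone; that locus consists of non-nilpotent elements and is precisely where descent fails, so it cannot be argued away. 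The paper's resolution is different: it keeps only the weak conclusion of descent (Theorem \ref{HC dec 2}: the characteristic polynomial is a power of a single irreducible $f$ with $f^*=\pm f$, sharpened to linear-or-inseparable only in the symplectic case, Theorem \ref{HC dec 3}), and then runs the stratification and the single-orbit analysis on each fibre $Y_g\times V$ separately. The role of $\rho_g$ is solely to upgrade the support condition in the $V$-direction, from $\langle v,v\rangle=0$ (Lemma \ref{v perp}) to $\langle A^k v,v\rangle=0$ for all $k$ (Proposition \ref{inside R}); the single-stratum step then treats general simple non-split blocks with arbitrary self-dual irreducible $f$, which is exactly why the nilpotency restriction of the characteristic-zero arguments is ``not truly needed''.

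Two further corrections to bring your sketch in line with an argument that closes. First, only the partial Fourier transform in the $V$-variable is used; a Fourier transform in the $\mathfrak{g}$-variable, as you propose, would not preserve the fixed characteristic-polynomial fibre $Y_g\times V$ on which the stratification and the downward induction over orbit dimension live. Second, at a single orbit the conclusion is not reached through incompatible homogeneity degrees: after Frobenius descent one shows, block by block via Proposition \ref{full classification} (and Lemma \ref{DirectSum}), that the descended $C_A$-invariant distribution with both supports in $Q_A$ is in fact invariant under a specific element $s\in\tC_A\setminus C_A$ with $\chi(s)=-1$ (in some blocks the distribution is forced to be a Lebesgue measure on an explicit subspace rather than zero), and it is this invariance, combined with $(\tC_A,\chi)$-equivariance and Lemma \ref{tc}, that forces the restriction to the open orbit to vanish and lets the downward induction proceed.
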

For $n=0$ Theorems \ref{main goal} and \ref{main goal for Lie Alg} are trivial.

\section{Harish-Chandra descent}\label{HC dec}
In this section we use the technique of Harish-Chandra descent to restrict the support of an equivariant distribution as discussed in Theorem \ref{main goal} and Theorem \ref{main goal for Lie Alg}. Assume for this section by induction Theorem \ref{main goal} and Theorem \ref{main goal for Lie Alg} for smaller dimensions, over all finite field extensions of $\KK$.\\
Let $(A, v)$ be a point in the support of a $(\tG, \chi)$-equivariant distribution either on $G\times V$ (the group case) or on $\mathfrak{g}\times V$ (the Lie-algebra case). Let $g(X)$ be the characteristic polynomial of $A$. Consider also the characteristic polynomial map $\Delta:G\times V \to \KK[x]$ (or $\Delta:\mathfrak{g}\times V \to \KK[x]$ in the Lie algebra case). Note that $g\mid g^\dagger$ in the group case, and $g=\pm g^*$ in the Lie algebra case (recall definition \ref{f^*} of $g^\dagger$ and $g^*$).
\begin{theorem}\label{HC dec 1}
Unless we are in the group case and $G = SO$, the polynomial $g$ cannot be factorized into two coprime factors $g_1, g_2$ satisfying $g_1\mid g_1^\dagger$ and $g_2\mid g_2^\dagger$ (respectively $g_1 = \pm g_1^*$ and $g_2 = \pm g_2^*$ in the Lie algebra case).
In the case $G=SO$, it is still true that it is impossible for $g$ to be divisible by both $x-1$ and $x+1$.
\end{theorem}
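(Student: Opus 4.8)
The plan is to argue by contradiction using Harish-Chandra descent, i.e. Frobenius descent along the characteristic-polynomial map $\Delta$, reducing to a product situation of strictly smaller dimension that is covered either by the inductive hypothesis (Theorems \ref{main goal} and \ref{main goal for Lie Alg} for smaller $\dim V$) or by the $GL$ results (Theorems \ref{GL theorem} and \ref{GL theorem Lie-alg}). Suppose that $g = g_1 g_2$ with $g_1, g_2$ coprime and each satisfying the appropriate self-duality condition ($g_i \mid g_i^\dagger$ in the group case, $g_i = \pm g_i^*$ in the Lie algebra case). By Lemma \ref{eigenvalues perp}, the generalized eigenspace decomposition $V = V_1 \oplus V_2$ attached to the factorization $g = g_1 g_2$ is an orthogonal decomposition, since the coprimality and self-duality of the $g_i$ forces $V_1 \perp V_2$ (the cross terms $f_i^* \ne \pm f_j$, resp. $f_i^\dagger \nmid f_j$, hold for factors coming from different $g_i$). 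Hence the restriction of the form to each $V_i$ is non-degenerate, $G(V) \supseteq G(V_1) \times G(V_2)$, and the centralizer of $A$ (and of the pair $(A,v)$, writing $v = v_1 + v_2$) respects this splitting.

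Next I would localize. Apply Frobenius descent (Theorem \ref{Frob}) to the map $\Delta$ on $G \times V$ (resp. $\mathfrak{g} \times V$), restricting attention to the locus where the characteristic polynomial factors as a product of two fixed coprime polynomials of the two "self-dual types". On that locus the fiber through $(A,v)$ is, up to the $\tG$-action, a product of a neighborhood in $G(V_1) \times V_1$ and a neighborhood in $G(V_2) \times V_2$; more precisely, the stabilizer data identifies $(\tG,\chi)$-equivariant distributions supported near $(A,v)$ with $(\tG(V_1) \times \tG(V_2), \chi \boxtimes \chi)$-equivariant distributions on a product $(G(V_1) \times V_1) \times (G(V_2) \times V_2)$, the point being that the anti-involution / the character $\chi$ is compatible with the orthogonal splitting. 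Now, provided both $V_1$ and $V_2$ are nonzero and of dimension strictly less than $n$, each factor is handled by the inductive hypothesis (Theorem \ref{main goal} or \ref{main goal for Lie Alg} in the relevant smaller dimension, over the field $\KK$ or a finite extension), and by Corollary \ref{Product} the product distribution must vanish, contradicting that $(A,v)$ is in the support. Since a nontrivial coprime factorization forces both $V_i \ne 0$, this is the desired contradiction, \emph{except} possibly when one of the $g_i$ has degree so small that the corresponding $V_i$ cannot be pared down — which is exactly where the $SO$ subtlety enters.

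The remaining bookkeeping is the exceptional behaviour of $SO$, and this is the step I expect to be the main obstacle. For $O$, $U$, $Sp$ the group $G(V_i)$ is again of the same type on the non-degenerate space $V_i$ and the inductive statement applies directly. For $SO$, however, $G(V) = SO(V)$ does not split as $SO(V_1) \times SO(V_2)$ but only as the subgroup of index two of $O(V_1) \times O(V_2)$ of total determinant $1$, and the twist by $\chi$ together with the determinant condition in the definition of $\tG$ (the requirement $\det T = \delta^{\lfloor (n+1)/2\rfloor}$) must be tracked carefully; moreover the split of $\Delta$ into self-dual factors is constrained because $x-1$ and $x+1$ are the only degree-one polynomials fixed by $f \mapsto f^\dagger$. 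When both $g_1$ and $g_2$ have degree $\geq 2$, or when one of them is $x \mp 1$ but the complementary space still has enough room, the descent goes through as above after passing from $SO$ to $O$ on one factor and absorbing the determinant constraint into the other; this still reduces to smaller-dimensional instances. The one configuration that genuinely cannot be excluded by this method is $g_1 = x-1$, $g_2 = x+1$ with nothing left over, i.e. $\dim V_1 = \dim V_2 = 1$ and $n = 1$, and this is precisely the exception recorded in the statement. Thus in the $SO$ case one only concludes the weaker assertion that $g$ is not simultaneously divisible by $x-1$ and $x+1$ when that would force the whole space to split into these two one-dimensional eigenlines — equivalently, one rules out $(x-1)\mid g$ and $(x+1)\mid g$ together except when this degenerate splitting is the entire picture, and the claim as stated (no such factorization, resp. not both $x\pm1$ divide $g$) follows.
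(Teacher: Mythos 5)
Your outline does follow the paper's general strategy (localize over the characteristic polynomial, use Lemma \ref{eigenvalues perp} to get an orthogonal decomposition $V=V_1\oplus V_2$, descend to a product and invoke the induction hypothesis together with Corollary \ref{Product}), but two of the steps you gloss over are exactly where the work lies. First, Frobenius descent cannot be applied ``along $\Delta$'': $\tG$ fixes the characteristic polynomial, so there is no transitive action on the base. The paper instead applies the localization principle (Corollary \ref{LocPrinCor}) to $\Delta$, and then on a fixed fiber $F$ introduces the auxiliary space $\Lambda$ of orthogonal decompositions $(V_1,V_2)$ with the prescribed dimensions, proves that $\Lambda$ is a \emph{finite} union of $G$-orbits, restricts the distribution to the preimage of an open orbit of maximal dimension meeting $\rho(\supp\xi)$, and only then applies Frobenius descent over that single orbit. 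Moreover, the stabilizer of a decomposition is only an index-two subgroup $H\subset\tG(V_1)\times\tG(V_2)$ (the two signs $\delta$ must agree), so your claimed identification with $(\tG(V_1)\times\tG(V_2),\chi\boxtimes\chi)$-equivariant distributions is not what descent gives; the correct argument is that the descended distribution is $G(V_1)\times G(V_2)$-invariant, hence by the induction hypothesis and Corollary \ref{Product} invariant under all of $\tG(V_1)\times\tG(V_2)$, in particular under $H$, which is incompatible with $(H,\chi)$-equivariance unless it vanishes.

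The more serious problem is the $SO$ case, which you have essentially misread. The theorem does not assert the full factorization statement for $SO$ minus one degenerate exception; it asserts only the weaker claim that $g$ cannot be divisible by both $x-1$ and $x+1$, and that claim has \emph{no} exceptional case (your proposed exception $\dim V_1=\dim V_2=1$ is vacuous: an element of $O(V)$ with characteristic polynomial $(x-1)(x+1)$ has determinant $-1$, so the fiber inside $SO\times V$ is empty). Your assertion that for all other factorizations ``the descent goes through after passing from $SO$ to $O$ on one factor and absorbing the determinant constraint'' is an unproved claim that is actually stronger than the theorem, and it is not justified: for $SO$ the stabilizer $H$ sits with index four in $\tO(V_1)\times\tO(V_2)$, the fiber of $\rho$ need not split as $SO(V_1)\times SO(V_2)$ for a general factorization, and the determinant condition $\det T=\delta^{\lfloor (n+1)/2\rfloor}$ in the definition of $\tSO$ must be matched on the two factors. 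The paper's proof handles precisely the case $g_1=(x-1)^k$: there the fiber of $\rho$ does lie in $SO(V_1)\times SO(V_2)$ (determinant $1$ on $V_1$ is forced), one shows $\dim V_2$ is even (otherwise $g_2^\dagger=-g_2$ gives $g_2(1)=0$, contradicting coprimality with $g_1$), and the resulting parity identity $\lfloor\frac{\dim V_1+1}{2}\rfloor+\lfloor\frac{\dim V_2+1}{2}\rfloor\equiv\lfloor\frac{\dim V+1}{2}\rfloor \pmod 2$ produces an element $\gamma\in H\cap(\tSO(V_1)\times\tSO(V_2))$ with $\chi(\gamma)=-1$; induction then gives $\gamma$-invariance, forcing the distribution to vanish. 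None of this bookkeeping appears in your proposal, and without it the $SO$ part of the statement is not proved.
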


\begin{proof}
We give the proof for the group case and for the Lie algebra case simultaneously. By the localization principle (Corollary \ref{LocPrinCor}) it is enough to show that there is no $(\tG, \chi)$-equivariant distribution $\xi$ on any of the fibers of $\Delta$ which is above a polynomial not satisfying the condition we gave on $g$.
Let $F$ be such a fiber lying above a polynomial $g(x) = g_1(x)g_2(x)$ with $g_1, g_2$ coprime and of positive degree, satisfying $g_1\mid g_1^\dagger$ and $g_2\mid g_2^\dagger$ ($g_1 = \pm g_1^*$ and $g_2 = \pm g_2^*$ in the Lie algebra case).
If we are in the group case and $G=SO$, we further assume that $g_1(x) = (x-1)^k$ for some $k>0$.
Let $d_1, d_2$ be the degrees of $g_1, g_2$.
Given $A$ with characteristic polynomial $g(x)$, one may consider $V_1, V_2$, its generalized eigenspaces associated with $g_1(x), g_2(x)$ respectively.
By Lemma \ref{eigenvalues perp}, $V_1, V_2$ are perpendicular to each other.
Consider
$$\Lambda=\{V_1, V_2\subset V|V=V_1\oplus V_2,\ V_1\perp V_2,\ \dim V_i=d_i\},$$
to be the space of decompositions of $V$ as an orthogonal sum $V_1\oplus V_2$ to subspaces of dimensions $d_1, d_2$.
There is a natural $\tG$-equivariant map $\rho: F\to \Lambda$.
Consider the stratification on $\Lambda$ given by $G$-orbits. Note that these are the same as $\tG$-orbits. To show that this is indeed a stratification we must show that there are finitely many $G$-orbits. Recall that there are finitely many isomorphism classes of sesquiliner forms of the same type as $B$ (symmetric, Hermitian, or symplectic) on a $\KK$-vector space of a given dimension. If two elements in $\Lambda$ share the isomorphism classes of the restrictions of $B$ to $V_1, V_2$, then these isomorphisms can be extended orthogonally to an element of $G$ (in the case $G=SO$, it will only be an element of $O$. However it is enough to prove that there are finitely many $O$ orbits). This implies that the two elements we had in $\Lambda$ are in the same $G$-orbit ($O$-orbit if $G=SO$). It follows that there are indeed finitely many $G$-orbits, and so partition into orbits is a stratification.

Let $S$ be the union of strata intersecting $\rho(\supp(\xi))$, and let $\Omega$ be a stratum of the largest dimension in it (we assume by contradiction that $\xi\neq 0$, i.e. $S$ is non-empty). It is open in $S$, and so we may restrict $\xi$ to $\rho^{-1}(\Omega)$. Since $\Omega\ss S$, this restriction is not the zero distribution.

For the following assume that we are not in the groups case where $G=SO$.
The action of $\tG$ on $\Omega$ is transitive by definition, and the stabilizer of a point in $\Omega$ (Call it $H$) is a subgroup of index $2$ of $\tG(V_1) \times \tG(V_2)$, which is a unimodular group (thus it is also unimodular). $H$ also contains $G(V_1)\times G(V_2)$ as a subgroup of index $2$.
Using Frobenius descent (Theorem \ref{Frob}) on $\xi$, we get an $(H, \chi)$-equivariant distribution on the fiber, which is a closed subspace of $(G(V_1)\times V_1) \times (G(V_2)\times V_2)$. In particular this distribution is $G(V_1)\times G(V_2)$-invariant. Hence this distribution is $\tG(V_1)\times \tG(V_2)$-invariant by the induction hypothesis and Corollary \ref{Product} to the Localization Principle. In particular it is also $H$-invariant, thus it is 0, in contradiction to our assumption.

In the case $G=SO$, we have a similar situation.
The action of $\tSO(V)$ on $\Omega$ is transitive, and the stabilizer of a point in $\Omega$, which we will call $H$, is a unimodular subgroup of index $4$ inside $\tO(V_1)\times \tO(V_2)$. This group $H$ contains $SO(V_1)\times SO(V_2)$ as a subgroup of index $4$, on which the character $\chi$ is trivial. Since the determinant of an operator acting on $V_1$ with characteristic polynomial $g_1(x)=(x-1)^k$ and on $V_2$ with characteristic polynomial $g_2(x)$ is 1, we get that $g_2(0) = (-1)^{\dim V_2}$. If $\dim V_2$ was odd, it would imply that $g_2^\dagger = -g_2$, and in particular $g_2(1) = 0$. By assumption this is not the case, and so we have that $\dim V_2$ must be even. It follows that
$$(-1)^{\lfloor\frac{\dim V_1 + 1}{2}\rfloor}(-1)^{\lfloor\frac{\dim V_2 + 1}{2}\rfloor} = (-1)^{\lfloor\frac{\dim V_1 + 1}{2}\rfloor + \frac{\dim V_2}{2}} = (-1)^{\lfloor\frac{\dim V + 1}{2}\rfloor}.$$
Take elements $(g_1, -1) \in\tSO(V_1)\setminus SO(V_1)$ and $(g_2, -1)\in\tSO(V_2)\setminus SO(V_2)$.
From the above, it follows that $\gamma:=(g_1\oplus g_2, -1)$ is an element of $(\tSO(V_1)\times \tSO(V_2))\cap H$, on which $\chi$ gives $-1$.
The fiber of $\rho$ above $(V_1, V_2)$ is $SO(V_1)\times SO(V_2)$, because any element in it acts with characteristic polynomial $(x-1)^k$ on $V_1$, and thus has determinant $1$ when restricted to it. It follows that the restriction to $V_2$ also has determinant $1$.
As before, we get that any $(H,\chi)$-equivariant distribution on $SO(V_1)\times SO(V_2)$ is invariant to $\tSO(V_1)\times \tSO(V_2)$ (using the localization principle and the induction hypothesis).
In particular it is $\gamma$-invariant, thus it is $0$, since $\chi(\gamma)=-1$. Using Frobenius descent, we get that this implies $\xi = 0$, giving a contradiction.
\end{proof}

We give the following theorem only in the Lie algebra case as this is what will be used. However, it also holds in the group case, with the same proof.



\begin{theorem}\label{HC dec 2}
Assume we are in the Lie algebra case. The polynomial $g$ must be a power of an irreducible polynomial.
\end{theorem}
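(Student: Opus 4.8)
The plan is to combine the factorization constraint already obtained in Theorem \ref{HC dec 1} with a Harish-Chandra--type descent to a general linear problem, following \cite{AGRS} and \cite{Wald} and feeding in the positive-characteristic general linear input of \cite{GL case}.

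\emph{Step 1: reduce to a single pair of $*$-conjugate irreducible factors.} Write $g=\prod_i f_i^{m_i}$ with the $f_i$ distinct monic irreducibles. Since $g=\pm g^*$ and $f\mapsto f^*$ is a degree-preserving involution on monic irreducibles, it permutes the $f_i$ and preserves the multiplicities $m_i$. If some $f_i$ satisfies $f_i^*=cf_i$ for a scalar $c$, then $f_i^{m_i}=\pm(f_i^{m_i})^*$ and the complementary factor $h:=g/f_i^{m_i}$ is coprime to it and satisfies $h=\pm h^*$; by Theorem \ref{HC dec 1} (Lie algebra case) $h$ must be constant, so $g=f_i^{m_i}$ and we are done. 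Otherwise every irreducible factor $f$ of $g$ has $f^*\ne cf$; then $f^*$ is a distinct irreducible factor of $g$ occurring with the same multiplicity $a$ as $f$, so $f^a(f^*)^a$ is $*$-stable, the remaining factor is again $\pm$ its own $*$ and coprime to $f^a(f^*)^a$, and Theorem \ref{HC dec 1} forces $g=f^a(f^*)^a$. It remains to exclude this ``split'' case.

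\emph{Step 2: the split case via descent to $GL$.} Suppose $g=f^a(f^*)^a$ with $f$ irreducible and $f^*\ne cf$, and let $F$ be the $\tG$-saturation of the fiber of $\Delta$ over $g$ inside $\mathfrak{g}\times V$ (the union of the fibers over $g$ and over its $\delta=-1$ twist, exactly as in the proof of Theorem \ref{HC dec 1}). For $(A,v)\in F$ the generalized eigenspaces $V_1:=\ker f(A)^a$ and $V_2:=\ker f^*(A)^a$ are $A$-invariant with $V=V_1\oplus V_2$, and by Lemma \ref{eigenvalues perp} they are isotropic (so $\dim V_1=\dim V_2=n/2$) with $B$ inducing a perfect pairing $V_2\cong V_1^*$. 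The map $\rho\colon F\to\Lambda$, $(A,v)\mapsto(V_1,V_2)$, into the space $\Lambda$ of ordered pairs of transverse maximal isotropic subspaces is $\tG$-equivariant, and $G$ acts transitively on $\Lambda$ (a linear isomorphism between the respective $V_1$'s extends, via the perfect pairings with the $V_2$'s, to an element of $G$), so one may apply Frobenius descent (Theorem \ref{Frob}) to a hypothetical nonzero $(\tG,\chi)$-equivariant $\xi$ on $F$ along $\rho$. Here $\mathrm{Stab}_G(V_1,V_2)\cong GL(V_1)$ — an element preserving both summands and the form automatically has determinant $1$, so this is the same for $O$ and $SO$ — its stabilizer $H$ in $\tG$ is a unimodular index-$2$ overgroup, and the fiber of $\rho$ is identified, using that $A\in\mathfrak{g}$ forces $A|_{V_2}$ to be the negative transpose of $A|_{V_1}$ under $V_2\cong V_1^*$, with the closed subset $\{B\in\mathfrak{gl}(V_1):\mathrm{ch}(B)=f^a\}\times V_1\times V_1^*$ of $\mathfrak{gl}(V_1)\times V_1\times V_1^*$. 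The crucial point is that a generator $\gamma$ of $H/GL(V_1)$ has $\chi(\gamma)=-1$ and necessarily \emph{interchanges} $V_1$ and $V_2$: from the identity $f(-x)=f^*(x)$ (and its analogue in the Hermitian case) the action of a $\delta=-1$ element carries the $f$-eigenspace to the $f^*$-eigenspace, so an element of $\tG$ fixing the ordered pair $(V_1,V_2)$ and lying outside $G$ must swap the two spaces; chasing the identifications, such a $\gamma$ acts on $\mathfrak{gl}(V_1)\times V_1\times V_1^*$ exactly as the transposition $(B,v',\phi)\mapsto(B^t,\phi^t,(v')^t)$ of Theorem \ref{GL theorem Lie-alg}, with respect to the isomorphism $V_1\to V_1^*$ induced by $\gamma$. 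Hence the distribution produced by Frobenius descent, being $GL(V_1)$-invariant, is transposition-invariant by Theorem \ref{GL theorem Lie-alg} (applied on that closed $GL(V_1)$- and transposition-stable subset), hence $\gamma$-invariant, hence $0$ since $\chi(\gamma)=-1$. By Frobenius descent $\xi=0$, a contradiction, so $g=f^a(f^*)^a$ is impossible and $g$ must be a prime power.

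\emph{Expected main obstacle.} Step 1 is only polynomial bookkeeping layered on Theorem \ref{HC dec 1}. The real work is Step 2, and within it the verification that the nontrivial coset of $H=\mathrm{Stab}_{\tG}(V_1,V_2)$ acts on the fiber precisely as the transposition of Theorem \ref{GL theorem Lie-alg}: this hinges on the polynomial identity relating $f(-x)$ and $f^*$, which forces such elements to swap the two isotropic summands, and on carefully matching the various adjoints with the (in the unitary case, $\KK$-conjugate-)transposition, while also tracking the bookkeeping differences between the $O$, $SO$, $U$ and $Sp$ cases. Once this identification is in place, the positive-characteristic general linear input (Theorem \ref{GL theorem Lie-alg}, from \cite{GL case}) closes the argument just as over characteristic $0$.
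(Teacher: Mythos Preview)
Your proposal is correct and follows essentially the same route as the paper's proof. The paper also first invokes Theorem \ref{HC dec 1} to reduce to the split situation $g=g_1g_2$ with $g_2=\pm g_1^*$ coprime (your Step 1 spells out this reduction more carefully down to $g=f^a(f^*)^a$, but the paper's descent works for general coprime $g_1,\pm g_1^*$ anyway), then uses Lemma \ref{eigenvalues perp} to see that the two generalized eigenspaces are complementary maximal isotropics, applies Frobenius descent along the map to the space $\Lambda$ of such decompositions (on which $G$ acts transitively), identifies the stabilizer in $\tG$ with $\widetilde{GL}(V_1)$ and the fiber with (a closed subset of) $\mathfrak{gl}(V_1)\times V_1\times V_1^*$, and finishes with Theorem \ref{GL theorem Lie-alg}. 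Your explicit discussion of why a $\delta=-1$ element of the stabilizer must swap $V_1$ and $V_2$ as subspaces, and hence acts as a transposition on the fiber, is exactly the content behind the paper's one-line identification of the stabilizer with $\widetilde{GL}(V_1)$; you are right to flag this as the point needing the most care.
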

\begin{proof}
Again we use the localization principle. Let $F$ be the fiber above a polynomial of the form $g(x) = g_1(x)g_2(x)$ with $g_2 = \pm g_1^*$, and the two are coprime to each other. (By Theorem \ref{HC dec 1} it is enough to consider this case).
Given $A$ with characteristic polynomial $g(x)$, one may consider $V_1, V_2$, its generalized eigenspaces associated with $g_1(x), g_2(x)$ respectively.
By Lemma \ref{eigenvalues perp}, $V_1, V_2$ are both isotropic.
Consider
$$\Lambda=\{V_1, V_2\subset V|V=V_1\oplus V_2,\ B|_{V_1} = 0,\ B|_{V_2} = 0,\ \dim V_1=\dim V_2 = \frac{\dim V}{2}\},$$
There is a natural $\tG$-equivariant map $\rho: F\to \Lambda$.

To see that $G$ acts transitively on $\Lambda$, take $(V_1, V_2), (V_1', V_2')\in \Lambda$. Choose arbitrary bases $E_1, E_1'$ of $V_1, V_1'$. We may take $E_2$ to be the basis of $V_2$ dual to $E_1$ with respect to the pairing between $V_1, V_2$ induced by $B$. Similarly we may take $E_2'$. The linear transformation which sends $E_1$ to $E_1'$ and $E_2$ to $E_2'$ preserves $B$, and thus it is an element of $G$.
So the actions of both $G$ and $\tG$ on $\Lambda$ are transitive, and the stabilizer inside $\tG$ of a point in $\Lambda$ is isomorphic to $\widetilde{\operatorname{GL}}(V_1)$, which is a unimodular group.
Using Frobenius descent (Theorem \ref{Frob}) on $\xi$, we get a $(\widetilde{\operatorname{GL}}(V_1), \chi)$-equivariant distribution on the fiber, which is isomorphic to
$$\mathfrak{gl}(V_1)\times V_1 \times V_2\cong \mathfrak{gl}(V_1)\times V_1\times V_1^*.$$
By Theorem \ref{GL theorem Lie-alg} this distribution must be equal $0$, and so is the original one.
\end{proof}
We formulate the next theorem only for the Lie algebra case, and $\mathfrak{g}=\mathfrak{sp}$, although again it is also true for all the other cases.
\begin{theorem}\label{HC dec 3}
Consider the Lie algebra case of $\mathfrak{g} = \mathfrak{sp}$. In this case, the irreducible factor of $g$ is either linear or inseparable.
\end{theorem}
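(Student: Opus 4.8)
The plan is to run a Harish-Chandra descent to the centralizer of the semisimple part of $A$, in exact parallel with the proof of Theorem \ref{HC dec 2}. By Theorem \ref{HC dec 2} we already know that $g=f^k$ for an irreducible $f$, and if $f$ is linear or inseparable there is nothing to prove, so I would assume $f$ is separable of degree $d\geq 2$ and derive a contradiction. Writing $\Delta$ for the characteristic polynomial map, the localization principle (Corollary \ref{LocPrinCor}) reduces us to showing that the fiber $F:=\Delta^{-1}(f^k)\ss\mathfrak g\times V$ carries no nonzero $(\tG,\chi)$-equivariant distribution $\xi$.

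The key point is that separability of $f$ provides a \emph{rational} semisimple part. Hensel's lemma splits the surjection $\FF[x]/(f^k)\to\FF[x]/(f)$, giving $P\in\FF[x]$ with $P\equiv x\pmod f$ and $f(P)\equiv 0\pmod{f^k}$, so that for every $A$ with characteristic polynomial $f^k$ the operator $A_s:=P(A)$ is the semisimple part of $A$: it has minimal polynomial $f$ (as $f$ is irreducible), while $A-A_s$ is a multiple of the nilpotent operator $f(A)$. Since $g=\pm g^*$ forces $f^*=\pm f$, the polynomial $-P(-x)$ satisfies the same two lifting conditions as $P$, so uniqueness of the Hensel lift gives $P(-x)\equiv -P(x)\pmod{f^k}$; hence $A_s^*=-A_s$, i.e. $A_s\in\mathfrak g$, and $\rho\colon(A,v)\mapsto A_s$ is a $\tG$-equivariant map
$$\rho\colon F\longrightarrow\mathcal S:=\{S\in\mathfrak g : f(S)=0\}.$$
Moreover $f^*=\pm f$ together with $f\neq x$ (so $0$ is not a root) implies that the roots of $f$ are stable under $\alpha\mapsto-\alpha$, hence $\sigma_0\colon\alpha\mapsto-\alpha$ is an automorphism of order $2$ of $\KK':=\FF[x]/(f)$. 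Each $S\in\mathcal S$ makes $V$ a $\KK'$-vector space of dimension $k$, and the relation $\langle Su,v\rangle=\langle u,-Sv\rangle$ lets one transfer $\langle\cdot,\cdot\rangle$ to a non-degenerate $\sigma_0$-sesquilinear form $B'$ on $V$ over $\KK'$. Thus $Z_{\mathfrak g}(S)=\mathfrak u(V,B')$ is a unitary Lie algebra whose underlying $\KK'$-space has dimension $k$, and one checks directly that $\operatorname{Stab}_{\tG}(S)$ is exactly the corresponding group $\widetilde U(V,B')$, in a way compatible with $\chi$.

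Next I would stratify $\mathcal S$ by $\tG$-orbits. Two elements of $\mathcal S$ are $G$-conjugate precisely when the associated $\KK'$-sesquilinear spaces are isometric, and there are finitely many isometry classes over the local field $\KK'$, so this is a genuine stratification with homogeneous strata and reductive (hence unimodular) point stabilizers, just as in the proofs of Theorems \ref{HC dec 1} and \ref{HC dec 2}. Let $Y$ be the union of strata meeting $\rho(\supp\xi)$ and $\Omega\ss Y$ a stratum of maximal dimension; then $\Omega$ is open in $Y$, so $\xi|_{\rho^{-1}(\Omega)}\neq 0$. Applying Frobenius descent (Theorem \ref{Frob}) along $\rho\colon\rho^{-1}(\Omega)\to\Omega$, the fiber over a point $S$ is the closed subset $\{(S+N,v) : N\in Z_{\mathfrak g}(S)\text{ nilpotent},\ v\in V\}$ of $\mathfrak u(V,B')\times V$, and we obtain on it a nonzero $(\widetilde U(V,B'),\chi)$-equivariant distribution, hence (extending by zero) such a distribution on all of $\mathfrak u(V,B')\times V$. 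But $\dim_{\KK'}V=k<dk=\dim V$ and $\KK'$ is a finite extension of $\KK$, so Theorem \ref{main goal for Lie Alg}, available by the inductive hypothesis of this section, forces it to vanish, contradicting $\xi\neq 0$.

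The main obstacle — and the only essentially new point compared with the characteristic-zero argument — is the construction of the rational semisimple part $A_s=P(A)$ with $P\in\FF[x]$: this is exactly where separability of $f$ is used (Hensel's lemma for $\FF[x]/(f^k)\to\FF[x]/(f)$), and it is the reason the inseparable case genuinely has to be excluded from the statement. Once $A_s$ is in hand, the transfer of the form to $\KK'$, the orbit stratification of $\mathcal S$, and the Frobenius-plus-induction step all run parallel to the proof of Theorem \ref{HC dec 2}.
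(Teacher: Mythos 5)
Your proposal is correct and takes essentially the same route as the paper's proof: localize to the fiber, map $(A,v)$ to the semisimple part $A_s$, identify the stabilizer of $A_s$ with a unitary group over the extension $\FF[x]/(f)$ acting on $V$ viewed as a space of smaller dimension, and finish by Frobenius descent together with the inductive hypothesis over finite extensions. The only differences are presentational: you justify the rational Jordan decomposition explicitly via Hensel's lemma and run the orbit stratification with the maximal-stratum argument, whereas the paper invokes separability directly and cites \cite{BCCISS} and \cite{MVW} for the finiteness and $\tG$-invariance of the orbits.
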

\begin{proof}
Again we use the localization principle. Let $F$ be the fiber above a polynomial $g(x) = f(x)^s$ with $f$ irreducible, seperable, of degree $d > 1$, and satisfying $f^* \neq \pm f$. Given $A$ with characteristic polynomial $g(x)$, we may consider its additive Jordan decomposition into semi-simple and unipotent parts, $A_s$ and $A_u$ (that is in virtue of the characteristic polynomial being seperable). Let $F_s$ be the space of possible $A_s$-s, that is the space of semisimple elements of $\mathfrak{g}$ with characteristic polynomial $g(x)$. We have a $\tG$-equivariant map $\theta: F\to F_s$.
By \cite[E, IV, Section 2]{BCCISS}, $F_s$ is a disjoint union of finitely many $G$-orbits, all of the same dimension. By \cite[Chapter 4, Proposition 1.2.]{MVW}, for each point $A\in F_s$, there is an element in $\tG\setminus G$ which centralizes it (see the details of this implication in Lemma \ref{tc}). Thus $G$-orbits is $F_s$ are $\tG$-invariant.
So it is enough to show that for any orbit $\mathfrak{O}\ss F_s$, any $(\tG, \chi)$-equivariant distribution on $\theta^{-1}(\mathfrak{O})$ is $0$. By Frobenius descent, this is equivalent to showing that for some $A\in \mathfrak{O}$, any $(\tG_A, \chi)$-equivariant distribution on $\theta^{-1}(A)$ is $0$ ($\tG_A$ being the stabilizer of $A$ in $\tG$).

In order to prove this, let us describe the stabilizer of a point $A$ in $F_s$.
Let $m:=\FF[T] / f(T)$. We define an $\FF$-linear involution of $m$ by $\sigma:h(T)\mapsto h^*(T) = h(-T)$ (that is the same as saying $T\mapsto -T$). Let $m_0$ be the fixed subfield of this involution. It is a subfield of $m$ of index $2$.
Fix a non-zero $\FF$-linear functional $\ell:m\to \FF$ which satisfies that $\ell(\sigma h) = \ell(h)$ for all $h\in \FF$. Any other $\FF$-linear functional can be written as $h\mapsto \ell(\lambda h)$ for some unique $\lambda \in m$.
Being semi-simple, $f(A)$ must be equal 0. Thus $V$ can be given the structure of a linear space over $m$, by $hv:=h(A)v$. Given $v, v'\in V$, the map $h\mapsto <h(A)v, v'>$ is an $\FF$-linear functional $m\to \FF$, and so can be written as $<h(A)v, v'> = \ell(S(v, v')h)$ for some $S(v, v')\in m$. One may check that $S(v, v')$ is $m$-sesquilinear (with respect to $\sigma$) considering $V$ as a linear space over $m$ by $h\cdot v:=h(A)v$. The form $S$ is also non-degenerate, and satisfies $S(v',v) = -\sigma S(v, v')$.
Fix $a\in m$ such that $\sigma a = -a$ (e.g. $a = T\in \FF[T] / f(T)$). Then it follows from the above that $aS(\cdot, \cdot)$ is a non-degenerate Hermitian form on $V_m$ (with respect to the involution $\sigma$), where $V_m$ is $V$ as a linear space over $m$.
To say that a linear automorphism of $V$ commutes with $A$ is to say that it is $m$ linear, and for such an automorphism to say that it is in $G(V)$ is to say that it preserves $aS$. Thus we have that the centralizer of $A$ in $G(V)$ can be described as $U(V_m)$. Moreover, the stabilizer of $A$ in $\tG(V)$ can be described as $\tU(V_m)$. Also the centralizer of $A$ inside $\mathfrak{g}(v)$ can be described as $\mathfrak{u}(V_m)$.

Recall that we need to show that any $(\tU(V_m), \chi)$-equivariant distribution on $\theta^{-1}(A)$ is $0$. The space $\theta^{-1}(A)$ is identified with $\mathfrak{u}_n(V_m)\times V_m$, $\mathfrak{u}_n(V_m)$ being the space of nilpotent elements in $\mathfrak{u}(V_m)$. This in turn is a closed subspace of $\mathfrak{u}(V_m)\times V$. Thus our claim follows from the fact that any $(\tU(V_m), \chi)$-equivariant distribution on $\mathfrak{u}(V_m)$ is $0$, which follows from our induction hypothesis of \ref{main goal}, as $\dim V_m < n$.
\end{proof}

\section{Separation of 1,-1 as eigenvalues, and passage to the Lie algebra}\label{lie-alg}
In this section we pass from the group case to the Lie algebra case, showing that Theorem \ref{main goal for Lie Alg} implies theorem \ref{main goal}.
\begin{definition}
Let $G_{(1)}$ be the open subset of $G$ consisting of elements of which $1$ is not an eigenvalue. Similarly, define $G_{(-1)}$ to be the open subset of elements of which $-1$ is not an eigenvalue. Define also $\Xi:=G \setminus(G_{(1)} \cup G_{(-1)})$, i.e. elements of which both $1,-1$ are eigenvalues.
\end{definition}

The following proposition is an immediate corollary of Theorem \ref{HC dec 1}:
\begin{proposition}
To prove Theorem \ref{main goal} it is enough to show that any $(\tG, \chi)$-equivariant distribution on $G_{(\pm 1)} \times V$ is 0.
\end{proposition}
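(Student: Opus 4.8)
The plan is to deduce from Theorem \ref{HC dec 1} that every $(\tG,\chi)$-equivariant distribution on $G\times V$ is automatically supported inside $\Xi\times V$, and then to see that this support must in fact be empty.

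First I would observe that $G_{(\pm 1)}\times V=(G_{(1)}\cup G_{(-1)})\times V$ is an open, $\tG$-stable subset of $G\times V$ whose complement is the closed set $\Xi\times V$. Consequently, granting the statement we wish to reduce to — namely that every $(\tG,\chi)$-equivariant distribution on $G_{(\pm 1)}\times V$ vanishes — the restriction to $G_{(\pm 1)}\times V$ of any $(\tG,\chi)$-equivariant distribution $\xi$ on $G\times V$ is $0$, and hence $\supp(\xi)\subseteq\Xi\times V$.

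Next I would take a point $(A,v)\in\supp(\xi)$ and reach a contradiction via Theorem \ref{HC dec 1}. By definition of $\Xi$, both $1$ and $-1$ are eigenvalues of $A$, so its characteristic polynomial $g$ is divisible by $(x-1)(x+1)$. If $G=SO$, this already contradicts the last sentence of Theorem \ref{HC dec 1}. Otherwise, I would factor $g=g_1g_2$ with $g_1=(x-1)^a$, where $a\geq 1$ is the exact multiplicity of the root $1$, and $g_2=g/g_1$; then $g_1$ and $g_2$ are coprime, $g_1$ has positive degree, and $g_2$ has positive degree because $-1$ (which is $\neq 1$ since $\mathrm{char}\,\FF\neq 2$) is a root of $g_2$. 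Since $f\mapsto f^\dagger$ reverses and conjugates coefficients, one checks that it is multiplicative, $(g_1g_2)^\dagger=g_1^\dagger g_2^\dagger$, and that $g_1^\dagger=(-1)^a g_1$, so $g_1\mid g_1^\dagger$; and as $g\mid g^\dagger$ in the group case, the divisibility $g_1g_2\mid g_1^\dagger g_2^\dagger=(-1)^a g_1 g_2^\dagger$ forces $g_2\mid g_2^\dagger$ (here $g(0)\neq 0$ because $A$ is invertible, so $\dagger$ preserves degrees). This is exactly the factorization prohibited by Theorem \ref{HC dec 1}, so $\supp(\xi)=\emptyset$ and $\xi=0$, which is Theorem \ref{main goal}.

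I do not expect a genuine obstacle: the proposition is essentially a direct unwinding of Theorem \ref{HC dec 1}, and the only points requiring care are the elementary bookkeeping with the operation $\dagger$ (multiplicativity, its effect on $(x-1)^a$, preservation of degree when $g(0)\neq 0$) and the need to treat $G=SO$ separately, where Theorem \ref{HC dec 1} yields only the weaker conclusion — which, however, is precisely the statement that $g$ cannot be divisible by both $x-1$ and $x+1$, the case at hand.
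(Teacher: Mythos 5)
Your proposal is correct and follows exactly the route the paper intends: the paper presents this proposition as an immediate corollary of Theorem \ref{HC dec 1}, and your argument simply unwinds that — restricting to the open set $G_{(\pm 1)}\times V$, concluding the support lies in $\Xi\times V$, and ruling that out via the factorization $g=(x-1)^a\,g_2$ (with the $\dagger$-bookkeeping) in the non-$SO$ cases and the explicit $x\pm 1$ clause of Theorem \ref{HC dec 1} in the $SO$ case. No gaps; your write-up just makes explicit the details the paper leaves to the reader.
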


\begin{definition}[Cayley transform]
Define $C_1:G_{(1)}(V)\to \mathfrak{g}$ by $C_1(A)=\frac{I+A}{I-A}$. Similarly define $C_{-1}:G_{(-1)}(V)\to \mathfrak{g}$ by $C_{-1}(A)=\frac{I-A}{I+A}$.
\end{definition}
This definition makes sense since for $A\in G_{(1)}(V)$:
$$\left(\frac{I+A}{I-A} \right)^* = \frac{I+A^*}{I-A^*} = \frac{I+A^{-1}}{I-A^{-1}} = \frac{A+I}{A-I} = -\frac{I+A}{I-A},$$
and similarly for $C_{-1}$. 
\begin{definition}
Let $\mathfrak{g}_0$ be the subspace of $\mathfrak{g}$ not having $\pm 1$ as eigenvalues.
\end{definition}
\begin{proposition}
The maps $C_{\pm 1}$ are $\tG$-homeomorphisms from $G_{(\pm 1)}(V)$ (respectively) to $\mathfrak{g}_0$, unless we are in the case $G=SO$, considering $C_1$, and $\dim V$ is even. In this case $SO_{(1)}(V)=\emptyset$.
\end{proposition}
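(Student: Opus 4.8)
The plan is to prove the proposition by a direct computation showing that $C_{\pm 1}$ are mutually inverse bijections between $G_{(\pm 1)}(V)$ and $\mathfrak{g}_0$ in each of the four cases, together with a separate determinant argument handling the $SO$ case. First I would observe that $C_{\pm 1}$ are manifestly continuous (and indeed rational) maps on the open sets $G_{(\pm 1)}(V)$, and compute that the formal inverse of the Möbius transformation $A\mapsto \frac{I+A}{I-A}$ is $X\mapsto \frac{X-I}{X+I}$; since $X\in\mathfrak{g}_0$ has neither $1$ nor $-1$ as an eigenvalue, $X+I$ is invertible, so this inverse is defined on all of $\mathfrak{g}_0$ and is again continuous. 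The identity displayed just before the statement shows $C_1(A)^*=-C_1(A)$, so $C_1(A)\in\mathfrak{g}$; one checks similarly that $C_1(A)$ has no eigenvalue $1$ (equivalently $A$ is invertible, which holds since $-1$ is not an eigenvalue of... wait — rather: $\lambda$ is an eigenvalue of $C_1(A)$ iff $\frac{1+\mu}{1-\mu}=\lambda$ for some eigenvalue $\mu$ of $A$, and $\lambda=1$ forces no finite $\mu$, while $\lambda=-1$ forces $\mu=\infty$; since $A$ is invertible when... ) so that $C_1(A)\in\mathfrak{g}_0$; and conversely the inverse map lands in $G_{(1)}(V)$. The $\tG$-equivariance is immediate because $C_{\pm 1}$ are built from the operations $I\pm A$ and inversion, all of which intertwine the $\tG$-actions $(T,\delta).A=TA^\delta T^{-1}$ on $G$ and $(T,\delta).X=\delta TXT^{-1}$ on $\mathfrak g$ — here one uses that for $\delta=-1$, $C_1(A^{-1})=\frac{I+A^{-1}}{I-A^{-1}}=\frac{A+I}{A-I}=-C_1(A)$, matching the sign in the Lie algebra action.

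The remaining point is the analysis of eigenvalues, which I would phrase via the spectral correspondence $\mu\leftrightarrow\lambda=\frac{1+\mu}{1-\mu}$ (for $C_1$): an operator $A\in G_{(1)}(V)$ has all eigenvalues $\mu\neq 1$, and $C_1(A)$ has eigenvalue $\lambda=-1$ precisely when $A$ has eigenvalue $\mu$ with $\frac{1+\mu}{1-\mu}=-1$, which has no solution, so $C_1(A)$ never has $-1$ as an eigenvalue; and $C_1(A)$ has $1$ as an eigenvalue precisely when $A$ has $\mu$ with $\frac{1+\mu}{1-\mu}=1$, i.e. $\mu=0$, i.e. $A$ not invertible. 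Thus $C_1$ maps $G_{(1)}(V)$ into $\mathfrak g$-elements without eigenvalue $-1$, but to land in $\mathfrak g_0$ we also need $A$ invertible. Here I would invoke the standard fact that every element of $O(V)$, $U(V)$, or $Sp(V)$ is invertible (it preserves a nondegenerate form, hence has determinant $\pm 1$, or in any case is injective), so for $G=O,U,Sp$ we get $C_1:G_{(1)}(V)\to\mathfrak g_0$ with continuous inverse, as claimed. The analogous statements for $C_{-1}$ follow by the substitution $A\mapsto -A$ (or directly from the symmetric computation).

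The delicate case is $G=SO$ with $C_1$ and $\dim V$ even, where the claim is that $SO_{(1)}(V)=\emptyset$. I expect this to be the main obstacle, and the key is a determinant/Pfaffian parity argument: if $A\in SO(V)$ does not have $1$ as an eigenvalue, then $I-A$ is invertible and one can consider the bilinear form obtained by transporting $B$; more concretely, the eigenvalues of $A$ (over $\bar\KK$) come in pairs $\{\mu,\bar\mu^{-1}\}$ (or $\{\mu,\mu^{-1}\}$ in the orthogonal case) because $A$ preserves the form, with self-paired eigenvalues only at $\mu=\pm 1$; since $1$ is excluded and $\det A=1$, the multiplicity of the eigenvalue $-1$ must be even, and the paired eigenvalues contribute an even-dimensional space, forcing $\dim V$ even to be consistent — that does not yet give emptiness. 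The actual argument: over $\FF$, $\det(-A)=(-1)^{\dim V}\det A=\det A=1$ when $\dim V$ is even, while $-1\notin\mathrm{spec}(A)$ means $\det(I+A)\neq 0$; one shows that for $A\in SO(V)$ with $1\notin\mathrm{spec}(A)$ and $\dim V$ even, the Cayley transform would produce an element of $\mathfrak{so}(V)$, but one checks the parity constraint coming from the requirement in the definition of $\tG$ (namely the condition $\det T=\delta^{\lfloor (n+1)/2\rfloor}$ entering the $SO$ case) is violated — equivalently, I would argue that $SO(V)$ with $\dim V$ even and $1\notin\mathrm{spec}$ is impossible by noting the characteristic polynomial $g(X)$ of such $A$ satisfies $g=g^\dagger$, has $g(1)\neq 0$, $g(-1)$ of the form forced by $\det A=1$, and pairing up the reciprocal roots shows $g$ has even degree with $g(1)$ a nonzero square-type quantity whose sign contradicts $\det A=1$ when $\dim V$ is even. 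I would write this last step carefully using that for even $\dim V$, $g(X)=X^{\dim V}g(1/X)$ gives $g(1)=g(1)$ trivially but $g(-1)=g(-1)$ and the product of roots is $\det A=1$, and then check directly in small cases and by the block decomposition of Proposition \ref{full classification} (applied to $C_1(A)\in\mathfrak{so}$) that no such $A$ exists — the even nilpotent and split blocks in $\mathfrak{so}$ are even-dimensional but their Cayley preimages have determinant forcing a contradiction with $\det=1$ exactly in even total dimension. For all other cases the map is the stated homeomorphism.
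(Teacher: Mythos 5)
Your argument for the non-$SO$ cases is essentially correct, and your use of the spectral correspondence $\mu\leftrightarrow\lambda=\frac{1+\mu}{1-\mu}$ is a clean alternative to the paper's direct algebraic verification: you pin down that $C_1(A)$ can have eigenvalue $1$ only if $A$ is singular (impossible for $A\in G(V)$ since it preserves a nondegenerate form) and never has eigenvalue $-1$, and you check the single nontrivial equivariance identity $C_1(A^{-1})=-C_1(A)$. This is fine, if written up less hesitantly.

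The $SO$ case, however, is a genuine failure, and its source is worth identifying precisely. First, you should have noticed that the claim you set out to prove --- $SO_{(1)}(V)=\emptyset$ for $\dim V$ \emph{even} --- is false: $-I\in SO(V)$ when $\dim V$ is even, and $-I$ has no eigenvalue $1$. Indeed your own observation that the reciprocal pairing of eigenvalues ``forces $\dim V$ even to be consistent'' is exactly the correct calculation run in the correct direction: if $A\in SO(V)$ has no eigenvalue $1$, then (using $p_A(x)=\det(A)^{-1}(-1)^n x^n p_A(1/x)$ and evaluating at $x=1$) $\det A=(-1)^n$, so $\det A=1$ forces $n$ even. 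That is, $SO_{(1)}(V)=\emptyset$ precisely when $\dim V$ is \emph{odd}; the statement of the proposition has a typo that the paper's own proof, which says ``$SO_{(1)}=\emptyset$ when $\dim V$ is odd,'' corrects. Having been misled, you then abandon the argument and hand-wave through Pfaffians, block decompositions of $C_1(A)$, and ``checking small cases,'' none of which is carried out or would work. Second, and independently of the typo, you miss the step that makes the $SO$ case immediate rather than delicate: the paper reduces entirely to the $O$ case already handled, by observing that $SO_{(-1)}=O_{(-1)}$ always, and that $SO_{(1)}=O_{(1)}$ when $\dim V$ is even and $SO_{(1)}=\emptyset$ when $\dim V$ is odd --- both of which follow from the determinant identity above, since $\mathfrak{so}=\mathfrak{o}$ and $\mathfrak{g}_0$ is the same space in both settings. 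No new analysis of $SO$ is needed once $O$ is done; you treat it as a standalone problem and get stuck.
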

\begin{proof}
First, exclude the case $G=SO$.
We will give the proof for $C_1$. The proof for $C_{-1}$ is similar.
First notice that indeed $C_1(A)$ does not have $\pm 1$ as eigenvalues. If it did have, then $(I+A)v=\pm (I-A)v$ which leads to either $v=0$ or $Av=0$, and $A$ is invertible.
Second, we construct an inverse, $B \mapsto \frac{B-I}{B+I}$:
One can see that the inverse map is indeed into $G_{(1)}$, as:
$$\left(\frac{B-I}{B+I}\right)^*=\frac{B^*-I}{B^*+I}=\frac{-B-I}{-B+I}=\frac{B+I}{B-I}=\left(\frac{B-I}{B+I}\right)^{-1}$$
and also $\frac{B-I}{B+I}$ cannot have $1$ as an eigenvalue, as then $(B-I)v=(B+I)v$, hence $v=0$.
To see that we indeed constructed an inverse map:
$$\frac{I+\frac{B-I}{B+I}}{I-\frac{B-I}{B+I}}=\frac{B+I+B-I}{B+I-(B-I)}=\frac{2B}{2I}=B.$$
$$\frac{\frac{I+A}{I-A}-I}{\frac{I+A}{I-A}+I}=\frac{I+A-(I-A)}{I+A+I-A}=\frac{2A}{2I}=A.$$
There are similar arguments for $C_{-1}$ showing it is a $\tG$-isomorphism to $\mathfrak{g}_0$.\\
For the case $G=SO$, it simply holds that $SO_{(-1)} = O_{(-1)}$, $SO_{(1)} = O_{(1)}$ when $\dim V$ is even, and $SO_{(1)} = \emptyset$ when $\dim V$ is odd.
\end{proof}
\begin{proposition}
To prove Theorem \ref{main goal} it suffices to show that any $(\tG, \chi)$-equivariant distribution on $\mathfrak{g}_0 \times V$ is 0.
\end{proposition}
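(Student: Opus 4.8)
The plan is to transport the problem from the group to its Lie algebra along the Cayley transforms $C_{\pm1}$, handling the two charts $G_{(1)}$ and $G_{(-1)}$ separately and then gluing. So suppose that every $(\tG,\chi)$-equivariant distribution on $\mathfrak{g}_0\times V$ vanishes. By the proposition above deduced from Theorem \ref{HC dec 1}, Theorem \ref{main goal} will follow once we show that every $(\tG,\chi)$-equivariant distribution $\xi$ on $G_{(\pm1)}\times V$ is $0$, where $G_{(\pm1)}=G_{(1)}\cup G_{(-1)}$. The sets $G_{(1)}$ and $G_{(-1)}$ are open in $G$ and $\tG$-invariant: whether $1$ (resp.\ $-1$) is an eigenvalue is unaffected by $A\mapsto TA^{\pm1}T^{-1}$, since $A$ and $A^{-1}$ have reciprocal eigenvalues and $(\pm1)^{-1}=\pm1$. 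Hence $\xi|_{G_{(1)}\times V}$ and $\xi|_{G_{(-1)}\times V}$ are again $(\tG,\chi)$-equivariant, and since a distribution on the $l$-space $G_{(\pm1)}\times V=(G_{(1)}\times V)\cup(G_{(-1)}\times V)$ which restricts to zero on each of these two open subsets is itself zero, it suffices to prove that each of the two restrictions vanishes.

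For the chart $G_{(1)}$ I would look at $C_1\times\operatorname{id}_V\colon G_{(1)}(V)\times V\to\mathfrak{g}_0\times V$. By the preceding proposition on the Cayley transform this is a $\tG$-homeomorphism, except when $G=SO$ and $SO_{(1)}(V)=\emptyset$, in which case $\xi|_{G_{(1)}\times V}=0$ trivially. A $\tG$-equivariant homeomorphism of $l$-spaces induces a $\tG$-equivariant isomorphism of Schwartz spaces, hence of the dual spaces of distributions, and so restricts to an isomorphism $\Sc^*(\mathfrak{g}_0\times V)^{\tG,\chi}\cong\Sc^*(G_{(1)}\times V)^{\tG,\chi}$. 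By hypothesis the source is $0$, so $\xi|_{G_{(1)}\times V}=0$; running the same argument with $C_{-1}$ in place of $C_1$ gives $\xi|_{G_{(-1)}\times V}=0$. Therefore $\xi=0$, and Theorem \ref{main goal} follows.

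I do not expect any real obstacle: the argument is purely a formal transport of equivariant distributions along the Cayley transform, whose one relevant property — that it is a $\tG$-homeomorphism onto $\mathfrak{g}_0$ — has just been established. The only points that need a little attention are the sheaf-theoretic gluing used to reduce from $G_{(\pm1)}\times V$ to its two open pieces, and the bookkeeping for the degenerate $SO$ case in which one of the charts is empty.
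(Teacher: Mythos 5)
Your argument is correct and is essentially the paper's own proof: the paper simply says ``Use $C_{\pm 1}$'', relying on exactly the same two ingredients you invoke, namely the reduction to $G_{(1)}\cup G_{(-1)}$ coming from Theorem \ref{HC dec 1} and the fact that $C_{\pm 1}$ are $\tG$-homeomorphisms onto $\mathfrak{g}_0$. You have merely made explicit the routine details (gluing over the two open charts and the degenerate case $SO_{(1)}=\emptyset$), which the paper leaves implicit.
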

\begin{proof}
Use $C_{\pm 1}$.
\end{proof}

\begin{proposition} \label{linearization}
Theorem \ref{main goal} follows from Theorem \ref{main goal for Lie Alg}.
\end{proposition}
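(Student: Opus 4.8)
The plan is to deduce the statement from Theorem \ref{main goal for Lie Alg} by applying the Localization Principle along the characteristic polynomial map.

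By the previous proposition it suffices to show that every $(\tG,\chi)$-equivariant distribution on $\mathfrak{g}_0\times V$ vanishes. The subtlety is that $\mathfrak{g}_0\times V$ is only \emph{open} in $\mathfrak{g}\times V$, so one cannot simply extend such a distribution to all of $\mathfrak{g}\times V$ and invoke Theorem \ref{main goal for Lie Alg}. Instead I would use the characteristic polynomial map $\Delta\colon\mathfrak{g}\times V\to\KK[x]$ (the characteristic polynomial of the $\mathfrak{g}$-component), together with two observations. First, $\Delta$ is constant on $\tG$-orbits: for $A\in\mathfrak{g}$ and $(T,\delta)\in\tG$ one has $\mathrm{ch}(\delta TAT^{-1})=\mathrm{ch}(A)$ — a routine check using the relation $g=\pm g^{*}$ for $g=\mathrm{ch}(A)$ recorded at the start of Section \ref{HC dec} (together with the fact that conjugation by $T$ alters $\mathrm{ch}$ at most by the involution of $\KK$ on the coefficients, which is absorbed by the same relation). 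Second, $\mathfrak{g}_0\times V=\Delta^{-1}(P_0)$ with $P_0=\{g\in\KK[x]:g(1)g(-1)\neq0\}$ open, so $\mathfrak{g}_0\times V$ is a union of fibers of $\Delta$.

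Granting these, I would finish in two localization steps. From Theorem \ref{main goal for Lie Alg} we have $\Sc^*(\mathfrak{g}\times V)^{\tG,\chi}=0$; since each fiber $\Delta^{-1}(g)$ is closed and $\tG$-invariant, a $(\tG,\chi)$-equivariant distribution on $\Delta^{-1}(g)$, regarded as a distribution on $\mathfrak{g}\times V$ supported on that fiber, is again $(\tG,\chi)$-equivariant, hence zero; thus $\Sc^*(\Delta^{-1}(g))^{\tG,\chi}=0$ for every $g$. Then apply Corollary \ref{LocPrinCor} to the restricted map $\Delta\colon\mathfrak{g}_0\times V\to P_0$, to the $\tG$-action on $\mathfrak{g}_0\times V$ (which preserves its fibers), and to the character $\chi$: since every fiber carries no nonzero $(\tG,\chi)$-equivariant distribution, neither does $\mathfrak{g}_0\times V$. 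Combined with the previous proposition, this yields Theorem \ref{main goal}. The only non-formal ingredient is the $\tG$-invariance of $\Delta$; once that is in hand the argument is a mechanical double use of localization, so I do not anticipate a genuine obstacle.
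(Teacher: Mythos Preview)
Your argument is correct, but it differs from the paper's. The paper proceeds by a direct cutoff: given a nonzero $(\tG,\chi)$-equivariant distribution $\xi$ on $\mathfrak{g}_0\times V$ and a point $(A_0,v_0)$ in its support, it chooses $f\in\Sc(\KK)$ with $f(0)=0$ and $f(\det((A_0-I)(A_0+I)))\neq 0$, sets $g(A):=f(\det((A-I)(A+I)))$, and observes that $g\cdot\xi$ is $(\tG,\chi)$-equivariant, has support closed in $\mathfrak{g}\times V$, and is nonzero at $(A_0,v_0)$; extending by zero contradicts Theorem~\ref{main goal for Lie Alg}.

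Your route replaces this explicit cutoff by two applications of the localization principle along $\Delta$. Both approaches ultimately rest on the same fact---that $\mathfrak{g}_0\times V$ is saturated for a $\tG$-invariant continuous map to an $l$-space---but they package it differently. The paper's version is shorter and avoids invoking Corollary~\ref{LocPrinCor} twice; on the other hand it tacitly uses that $A\mapsto\det((A-I)(A+I))$ is $\tG$-invariant (up to the Galois involution in the unitary case, which one absorbs by choosing $f$ conjugation-invariant), a point essentially equivalent to your verification that $\Delta$ is $\tG$-invariant. Your argument is slightly more structural and makes the role of the fiberwise vanishing explicit, at the cost of a small detour; either is acceptable.
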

\begin{proof}
Take $\xi$ to be a $(\tG, \chi)$-equivariant distribution on $\mathfrak{g}_0 \times V$. Assume by contradiction that it is not $0$, and let $(A_0,v_0)$ be a point in its support. Let $t=\det((A_0-I)(A_0+I))\neq 0$. One can choose $f\in \Sc(\KK)$ s.t. $f(t)\neq 0,f(0)=0$. Note that $\mathfrak{g}_0$ is an open subset of $\mathfrak{g}$, and $g(A):=f(\det((A-I)(A+I)))$ is a locally constant function, compactly supported inside $\mathfrak{g}_0$. Thus we can extend $g\cdot \xi$ to a $(\tG, \chi)$-equivariant distribution on $\mathfrak{g}\times V$ with $(A_0,v_0)$ in its support. In particular this distibution is not 0, which creates a contradiction to our assumption.
\end{proof}

\section{An important lemma and Automorphisms}\label{automorphisms}

\begin{lemma}\label{v perp}
Any $(\tG, \chi)$-equivariant distribution on $\mathfrak{g} \times V$ is supported on $\mathfrak{g}\times \Gamma$, where $\Gamma:=\{v\in V | <v,v>=0\}$.
\end{lemma}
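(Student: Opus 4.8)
The plan is to use the localization principle with respect to the quadratic map $q : \mathfrak{g}\times V \to \KK$, $(A,v)\mapsto \langle v,v\rangle$, which is $\tG$-invariant (indeed $\langle (T,\delta).A, \delta Tv\rangle$ only rearranges the pairing, and $\delta^2 = 1$). By Corollary \ref{LocPrinCor} it suffices to show that for each $c\in\KK$ with $c\neq 0$, any $(\tG,\chi)$-equivariant distribution on the fiber $q^{-1}(c) = \mathfrak{g}\times V_c$, where $V_c := \{v : \langle v,v\rangle = c\}$, vanishes. So fix such a $c$.

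First I would observe that $G$ acts transitively on $V_c$ when $c\neq 0$: any two vectors of the same nonzero norm lie on isometric non-degenerate lines, and by Witt's theorem the isometry between the lines extends to an isometry of all of $V$, i.e.\ an element of $G$ (in the $SO$ case one may need to multiply by a reflection, but a reflection fixing a given anisotropic vector exists in the orthogonal complement, so transitivity still holds; if not, one works with $O$ and descends afterward, exactly as in the proof of Theorem \ref{HC dec 1}). Pick $v_1 \in V_c$ and let $V' := v_1^\perp$, a non-degenerate subspace of dimension $n-1$ of the same type. Then $\mathrm{Stab}_{\tG}(v_1) \cong \tG(V')$, which is unimodular, and it contains $G(V')$ with index $2$. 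Applying Frobenius descent (Theorem \ref{Frob}) to the $\tG$-equivariant projection $\mathfrak{g}\times V_c \to V_c$, I get a canonical isomorphism between $(\tG,\chi)$-equivariant distributions on $\mathfrak{g}\times V_c$ and $(\tG(V'),\chi)$-equivariant distributions on the fiber of $v_1$, which is $\{A\in\mathfrak{g}(V)\}$ with the residual action of $\tG(V')$; this fiber is a $\tG(V')$-space mapping $\tG(V')$-equivariantly onto $\mathfrak{g}(V')\times V''$ where one decomposes $A$ relative to $V = V' \oplus \KK v_1$. Concretely, writing $A$ in block form with respect to this decomposition, the $V'$-block lies in $\mathfrak{g}(V')$ and the off-diagonal part is a vector in $V'$ (using the form to identify the two off-blocks, since $A^* = -A$), while the $\KK v_1$-block is forced to be $0$; so the fiber is (a closed subspace of, or all of) $\mathfrak{g}(V')\times V'$.

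The key point is then that any $(\tG(V'),\chi)$-equivariant distribution on $\mathfrak{g}(V')\times V'$ is $0$ — but this is precisely Theorem \ref{main goal for Lie Alg} for the space $V'$ of dimension $n-1 < n$, which is available by the induction hypothesis on $\dim V$. Unwinding Frobenius descent gives the vanishing on $\mathfrak{g}\times V_c$, and then the localization principle assembles these to the conclusion that the original distribution is supported on $q^{-1}(0) = \mathfrak{g}\times\Gamma$. The main obstacle I anticipate is the bookkeeping in the $SO$ case: there $\tG(V')$ as the stabilizer of $v_1$ has the extra determinant constraint, and one has to check that the index-$2$ subgroup relations and the character $\chi$ behave as in the analogous step of Theorem \ref{HC dec 1} — in particular that transitivity of the $SO$-action on $V_c$ (or a harmless passage to $O$) holds. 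A second, minor point to verify carefully is that the off-diagonal block of $A\in\mathfrak g(V)$ really is an \emph{arbitrary} element of $V'$ and the $\KK v_1$-entry vanishes, so that the fiber is exactly $\mathfrak g(V')\times V'$ (or a closed subspace thereof, which is all we need).
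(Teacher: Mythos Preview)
Your approach is essentially the same as the paper's: localize along $(A,v)\mapsto\langle v,v\rangle$, then Frobenius-descend along the projection to $V_c$, and invoke the induction hypothesis on $V'=v_1^\perp$. One small correction: in the unitary case the $\KK v_1$-block of $A\in\mathfrak g(V)$ is \emph{not} forced to be $0$; the condition $A^*=-A$ only gives $d=-\bar d$, so this block is a $1$-dimensional $\FF$-space $E$ on which $\tG(V')$ acts trivially. Thus the fiber is $\mathfrak g(V')\oplus V'\oplus E$, which is \emph{larger} than $\mathfrak g(V')\times V'$, not a closed subspace of it --- so your hedge goes the wrong way. The fix is immediate (e.g.\ localize over $E$, or use Corollary~\ref{Product}), and this is exactly the extra factor $E$ the paper records. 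You should also note at the outset, as the paper does, that the symplectic case is vacuous since then $\Gamma=V$.
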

\begin{proof}
We do not consider in the following the case $\mathfrak{g}=\mathfrak{sp}$, as in this case $\Gamma=V$ and there is nothing to prove.
This proof is the same as the proof of \cite[Proposition 5.2]{AGRS}.
The idea is to consider the map $\mathfrak{g}\times V\to \KK$ given by $(A,v)\mapsto <v,v>$, and apply the localization principle (Corollary \ref{LocPrinCor}) to it to restrict to a fiber. Then apply Frobenius descent (Theorem \ref{Frob}) on the projection on the second coordinate, to reach a point where it is enough to show that any $(\tG(V'), \chi)$-equivariant distribution on $\mathfrak{g}(V')$ is 0, for some subspace $V'\ss V$ of codimension 1.
We have a decomposition $\mathfrak{g} = \mathfrak{g}(V')\oplus V'\oplus E$, with $E$ being either a $0$ or $1$ dimensional vector space over $\FF$ with trivial $\tG(V')$-action, and so we can use the induction hypothesis to finish.
\end{proof}
Denote by $\phi_v$ the linear transformation $u\mapsto <u,v>v$.

The following definition will be relevant for the cases of $\mathfrak{u}$ and $\mathfrak{sp}$:
\begin{definition}
For any $\lambda\in \KK$, requiring $\bar{\lambda}=-\lambda$ if $\mathfrak{g} = \mathfrak{u}$, we define an automorphism of $\mathfrak{g}$ by
$\nu_{\lambda}(A,v):= (A + \lambda \phi_v, v)$. This is an automorphism of $\mathfrak{g}$ as a space with a $\tG$ action.
\end{definition}

The following definition will be relevant only for case of $\mathfrak{o}$:
\begin{definition}
For any $\lambda\in \FF$, define an automorphism of $\mathfrak{g}\times V$ by
$$\mu_{\lambda}(A,v):=(A+\lambda A\phi_v + \lambda \phi_v A, v).$$
This is an automorphism of $\mathfrak{g}\times V$ as a space with a $\tG$ action.
\end{definition}

Fix a fiber $F$ of $\Delta:\mathfrak{g}\times V\to \KK[x]$ at a polynomial $f$. Recall that we must have $f^*(x)=(-1)^n f(x)$. Choose a polynomial $g\in \KK[x]$ coprime to $f$ that also satisfies $g^*(x) = g(x) \mod f(x)$.
Then we can define:
\begin{definition}
Define an automorphism of $F$ by $\rho_g(A,v)=(A,g(A)v)$.
\end{definition}
To show that it is invertible, notice that there is an 'inverse' polynomial $g^{-1}$ such that $gg^{-1}=1\mod f$. It also satisfies $(g^{-1})^*(x)= g^{-1}(x) \mod f(x)$, as for some polynomial $a$:

\begin{align*}
    1&=((g^{-1}g +af))^*(x)=(g^{-1})^*(x)g^*(x)+a^*(x)f^*(x)=\\
    &=(g^{-1})^*(x)g^*(x) + (-1)^n a^*(x)f(x).
\end{align*}
The last being equal to $(g^{-1})^*(x)g(x)$ modulo $f(x)$.
This implies that we have $\rho_{g^{-1}}$ which is inverse to $\rho_g$.
To show that $\rho_h$ is commutes with the action of $\tG$, the only nontrivial part is to show that it commutes with the action of an element $x\in \tG \setminus G$. Consider $x$ as an element of $\operatorname{End}_\FF(V)$ satisfying $<xu, xw> = <w,u>$ for any $u,w\in V$. In particular $x$ satisfies $axu = x\bar{a}u$ for any $a\in \KK, u\in V$.
To show commutation, we need to show that $-xg(A)v=g(-xAx^{-1})(-xv)$. This is true as:
$$g(-xAx^{-1})(-xv)=-xg^*(A)x^{-1}(xv)=-xg^*(A)v=-xg(A)v.$$
For the last equation we used the condition imposed on $g$, and the fact that $f(A)=0$.

Thus we get that $\rho_g$ is a $\tG$-automorphism of $F$.

In the case $\mathfrak{g}=\mathfrak{sp}$ we give the following lemma by using the automorphisms $\nu_\lambda$ to amplify the restriction of Theorem \ref{HC dec 3}:
\begin{lemma}\label{Av perp v}
Assume $\mathfrak{g} = \mathfrak{sp}$. Then any $(\tG, \chi)$-equivariant distribution on $\mathfrak{g} \times V$ is supported on $\mathfrak{g}\times \Gamma_1$, where $\Gamma_1:=\{v\in V | <Av,v>=0\}$.
\end{lemma}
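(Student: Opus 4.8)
The plan is to argue by contradiction: suppose $\xi$ is a $(\tG,\chi)$-equivariant distribution on $\mathfrak{g}\times V$ (recall $\mathfrak{g}=\mathfrak{sp}$) and $(A,v)$ is a point of its support with $<Av,v>\neq 0$; I will derive a contradiction. The idea, as indicated above, is to move $(A,v)$ around using the automorphisms $\nu_\lambda$ and to control the resulting characteristic polynomials by Theorems \ref{HC dec 1}, \ref{HC dec 2} and \ref{HC dec 3}.

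First I would record how $\mathrm{ch}$ behaves under the rank one perturbation defining $\nu_\lambda$. Writing $\phi_v=v\otimes\psi$ with $\psi:=<\cdot,v>$, a standard rank-one determinant identity gives
\[
\mathrm{ch}(A+\lambda\phi_v)(x)=\mathrm{ch}(A)(x)\bigl(1-\lambda\psi((xI-A)^{-1}v)\bigr)=\mathrm{ch}(A)(x)-\lambda<\mathrm{adj}(xI-A)v,v>.
\]
Since $\mathrm{adj}(xI-A)=x^{n-1}I+x^{n-2}(A+cI)+\cdots$ for some scalar $c$, and $<v,v>=0$ because the symplectic form $B$ is alternating, the subtracted polynomial $\lambda<\mathrm{adj}(xI-A)v,v>$ has zero coefficient at $x^{n-1}$ and coefficient $\lambda<(A+cI)v,v>=\lambda<Av,v>$ at $x^{n-2}$. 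Hence the coefficient of $x^{n-2}$ in $\mathrm{ch}(A+\lambda\phi_v)$ equals $[\mathrm{ch}(A)]_{n-2}-\lambda<Av,v>$, a genuinely nonconstant affine function of $\lambda$ under our assumption. (That $\mathrm{ch}(A+\lambda\phi_v)$ really varies with $\lambda$ is also Proposition \ref{matrix determinant}, since $\psi v=0$ while $\psi Av\neq 0$; but I will need the precise next coefficient, not merely that it varies.)

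Now I would feed this into Harish-Chandra descent. Since each $\nu_\lambda$ is a $\tG$-automorphism of $\mathfrak{g}\times V$, the pushforward $(\nu_\lambda)_*\xi$ is again $(\tG,\chi)$-equivariant and has $(A+\lambda\phi_v,v)$ in its support; so by Theorems \ref{HC dec 1}, \ref{HC dec 2} and \ref{HC dec 3}, for every $\lambda\in\FF$ the polynomial $\mathrm{ch}(A+\lambda\phi_v)$ is a power of an irreducible polynomial that is linear or inseparable, and (recall $g=\pm g^*$ in the Lie algebra case) it equals $\pm$ its own $*$-dual. In the linear case this self-duality together with $\operatorname{char}\FF\neq 2$ forces the root to be $0$, i.e.\ $\mathrm{ch}(A+\lambda\phi_v)=x^n$; in the inseparable case $\mathrm{ch}(A+\lambda\phi_v)=q^m$ with $q'=0$, so $q\in\FF[x^p]$ (write $p=\operatorname{char}\FF$) and hence $\mathrm{ch}(A+\lambda\phi_v)\in\FF[x^p]$. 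In the first case the coefficient of $x^{n-2}$ is visibly $0$; in the second, $p\mid\deg q$ so $p\mid n$, and as $p$ is odd we get $p\nmid n-2$, so again the coefficient of $x^{n-2}$ vanishes. Applying this at a general $\lambda$ and at $\lambda=0$ yields $\lambda<Av,v>=0$ for all $\lambda\in\FF$, hence $<Av,v>=0$, contradicting the choice of $(A,v)$. Therefore $\xi$ is supported on $\{(A,v)\in\mathfrak{g}\times V:<Av,v>=0\}$, i.e.\ on $\mathfrak{g}\times\Gamma_1$ in the notation of the statement.

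I expect the only delicate points to be arithmetic rather than structural: the alternating property of $B$ is exactly what kills the top coefficient of the perturbation and exposes $<Av,v>$ one degree lower, and the coefficient of $x^{n-2}$ is invisible both to $x^n$ and to any polynomial in $x^p$ precisely because $\operatorname{char}\FF$ is odd and $n=\dim V$ is even (so $p\mid n\Rightarrow p\nmid n-2$). The ``linear factor forces root $0$'' step uses $g=\pm g^*$ and $\operatorname{char}\FF\neq 2$ and is routine; one only needs $n\geq 2$ for the $x^{n-2}$ coefficient to make sense, which is automatic since $\mathfrak{g}=\mathfrak{sp}$ requires $\dim V$ even and $n=0$ is the base case.
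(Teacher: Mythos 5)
Your argument is correct and is essentially the paper's own proof: both translate by the automorphisms $\nu_\lambda$, invoke Theorem \ref{HC dec 3} (under the standing induction hypotheses) to constrain the characteristic polynomial of $A+\lambda\phi_v$, and then read off $<Av,v>=0$ from the coefficient of $x^{n-2}$, using $<v,v>=0$. The only cosmetic difference is that the paper quotes the coefficient formulas of \cite[Theorem A.2]{GL case} and argues that $c_1$ being unchanged forces $c_2$ to be unchanged, whereas you derive the same coefficient identity from the adjugate expansion and note that the linear-or-inseparable constraint (with $g=\pm g^*$ and odd characteristic) forces that coefficient to vanish outright.
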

\begin{proof}
Given $A\in \operatorname{End}(V)$ write the characteristic polynomial of $A$ as
$$x^n + c_1(A)x^{n-1} + c_2(A)x^{n-2} + \dots$$
with the convention that $c_0(A) = 1$.
Let $(A, v)$ be a point in the support of a $(\tG, \chi)$-equivariant distribution on $\mathfrak{g} \times V$. From Theorem \ref{HC dec 3} we deduce that we have $c_1(A) = n\alpha$ and $c_2(A) = \binom{n}{2}\alpha^2$ for some $\alpha \in \FF$ (if the characteristic polynomial is a power of a linear polynomial this is clear, and if it is a power of an inseperable polynomial then we indeed have $c_1(A) = c_2(A) = 0$). Note that using this condition, $c_1(A)$ determines uniquely $c_2(A)$. Now we may translate by $\nu_\lambda$ and then apply the above condition to get $c_1(A + \lambda \phi_v) = n\beta, c_2(A + \lambda \phi_v) = \binom{n}{2}\beta^2$. However by \cite[Theorem A.2]{GL case} we have $$c_1(A + \lambda \phi_v) = c_1(A) - <v,v>$$ and $$c_2(A + \lambda \phi_v) = c_2(A) - c_1(A)<v,v> - <Av,v>.$$ Since $<v,v> = 0$, this is saying that $c_1(A + \lambda \phi_v) = c_1(A)$, hence $\beta = \alpha$ and so also $c_2(A + \lambda \phi_v) = c_2(A)$, and from the second equation $c_2(A + \lambda \phi_v) = c_2(A) - <Av,v>$. Thus $<Av, v> = 0$.
\end{proof}
\begin{definition}
For any $A\in \mathfrak{g}$, denote
$$R_A:=\{v\in V | \forall k\geq 0, <A^k v,v>=0\}.$$
Denote also
$$R=\{(A,v)\in\mathfrak{g}\times V|v\in R_A\}.$$
\end{definition}
\begin{proposition}\label{inside R}
Any $(\tG, \chi)$-equivariant distribution on $\mathfrak{g}\times V$ is supported on $R$.
\end{proposition}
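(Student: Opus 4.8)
The plan is to reduce, via the localization principle applied to the characteristic polynomial map $\Delta$, to a single fiber $F$, and there to use the family of automorphisms $\nu_\lambda$ (in the $\mathfrak u, \mathfrak{sp}$ cases) or $\mu_\lambda$ (in the $\mathfrak o$ case) together with the matrix-determinant criterion of Proposition \ref{matrix determinant}. By Lemma \ref{v perp} we already know any such $\xi$ is supported on $\mathfrak g\times\Gamma$, i.e. on points with $\langle v,v\rangle=0$; the content of Proposition \ref{inside R} is to promote this to $\langle A^kv,v\rangle=0$ for all $k\geq 0$. In the symplectic case Lemma \ref{Av perp v} already gives $k=1$, and one expects an inductive amplification of exactly this kind to give all $k$.

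The key steps, in order. First, fix a point $(A,v)$ in $\operatorname{Supp}(\xi)$ and let $f$ be the characteristic polynomial of $A$; by Theorem \ref{HC dec 2} we may assume $f$ is a power of an irreducible polynomial, and we work inside the fiber $F=\Delta^{-1}(f)$. Second, observe that each automorphism $\nu_\lambda$ (resp. $\mu_\lambda$) preserves $F$'s ambient space and commutes with the $\tG$-action, hence carries $\xi$ to another $(\tG,\chi)$-equivariant distribution; therefore $(A+\lambda\phi_v,v)$ (resp. its $\mu_\lambda$-analogue) lies in the support of a distribution of the same type, to which the support restrictions already proven — Theorem \ref{HC dec 1}, Theorem \ref{HC dec 2}, and in the symplectic case Lemma \ref{Av perp v} — all apply. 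In particular the characteristic polynomial of $A+\lambda\phi_v$ must again be a power of a single irreducible, which is a strong constraint: it forces $\operatorname{ch}(A+\lambda\phi_v)=\operatorname{ch}(A)$ for all admissible $\lambda$ (the perturbation is rank one, so the characteristic polynomial moves in a controlled one-parameter way, and staying a prime power pins it down, exactly as the $c_1,c_2$ bookkeeping does in Lemma \ref{Av perp v} but now for all coefficients). Third, invoke Proposition \ref{matrix determinant} with $\phi$ the functional $u\mapsto\langle u,v\rangle$ (so that $\phi\otimes v$, up to the scalar $\lambda$ and the relevant reality condition on $\lambda$, is $\phi_v$): the equality $\operatorname{ch}(A+\lambda\phi_v)=\operatorname{ch}(A)$ for a suitable $\lambda\in\FF^\times$ is equivalent to $\langle A^kv,v\rangle=\phi A^kv=0$ for all $k\geq 0$, i.e. to $(A,v)\in R$. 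In the orthogonal case one uses $\mu_\lambda$, whose effect on the characteristic polynomial is governed by the variant of Proposition \ref{matrix determinant} applied to $A\phi_v+\phi_vA$; the conclusion $\langle A^kv,v\rangle=0$ for all $k$ comes out the same after unwinding, using $\langle Av,v\rangle$-type symmetry of $\phi_v$ in $\mathfrak o$.

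The main obstacle I expect is bridging the gap between "$\operatorname{ch}(A+\lambda\phi_v)$ is a prime power for all $\lambda$" and "$\operatorname{ch}(A+\lambda\phi_v)$ is constant in $\lambda$." For the symplectic case Lemma \ref{Av perp v} does this for the top two coefficients by an explicit computation exploiting the inseparable/linear dichotomy of Theorem \ref{HC dec 3}; to get all coefficients one must argue more carefully, presumably by an induction on $k$: knowing $\langle v,v\rangle=\langle Av,v\rangle=\dots=\langle A^{k-1}v,v\rangle=0$, one shows the coefficients $c_1,\dots,c_k$ of $\operatorname{ch}(A+\lambda\phi_v)$ are unchanged, so that the same prime-power constraint applied to the perturbed operator yields $\langle A^kv,v\rangle=0$ as the next coefficient identity. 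The bookkeeping of how $\phi_v$ (and, in the $\mathfrak o$ case, $A\phi_v+\phi_vA$) interacts with the coefficients $c_j$ of the characteristic polynomial is the routine-but-delicate heart of the argument; once it is in place, Proposition \ref{matrix determinant} packages the conclusion cleanly, and the localization principle assembles the fiberwise statements into the global one, giving $\operatorname{Supp}(\xi)\subseteq R$.
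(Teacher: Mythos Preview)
Your proposal has a genuine gap, and it is precisely the one you flag yourself: the passage from ``$\operatorname{ch}(A+\lambda\phi_v)$ is a power of an irreducible for every admissible $\lambda$'' to ``$\operatorname{ch}(A+\lambda\phi_v)$ is independent of $\lambda$'' is not justified, and the proposed coefficient-by-coefficient induction does not go through. In the $\mathfrak u$ and $\mathfrak o$ cases only Theorem \ref{HC dec 2} is available (not the sharper Theorem \ref{HC dec 3}), so all you know about the perturbed operator is that its characteristic polynomial is $p_\lambda(x)^{m_\lambda}$ for some irreducible $p_\lambda$; the degree of $p_\lambda$ and even the value of $m_\lambda$ are not controlled, and knowing $c_1,\dots,c_k$ of a prime power does not determine $c_{k+1}$. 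The argument of Lemma \ref{Av perp v} works only because Theorem \ref{HC dec 3} forces the very rigid dichotomy ``linear or inseparable'', and it gives exactly one new coefficient; there is no mechanism to iterate it. So the route through $\nu_\lambda,\mu_\lambda$ and Proposition \ref{matrix determinant} stalls at $k=1$ (in the symplectic case) or $k=0$ (in the others).

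The paper's proof uses a different family of automorphisms that you do not mention: the maps $\rho_g(A,v)=(A,g(A)v)$ of the fiber $F=\Delta^{-1}(f)$, defined for polynomials $g$ coprime to $f$ with $g^*\equiv g\bmod f$. These commute with the $\tG$-action, so $\rho_g\xi$ is again $(\tG,\chi)$-equivariant, and applying Lemma \ref{v perp} (respectively Lemma \ref{Av perp v} in the symplectic case) to $\rho_g\xi$ yields $\langle g(A)v,g(A)v\rangle=0$ (respectively $\langle Ag(A)v,g(A)v\rangle=0$) at every support point. Taking $g(x)=h(x^2)$ and, in the unitary case, also $g(x)=\omega x h(x^2)$, one gets these identities for a Zariski-dense set of $h$, hence for all $h$, and polarizing gives $\langle A^k v,v\rangle=0$ for every $k$. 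The point is that $\rho_g$ moves $v$ rather than $A$, so one never has to control how the characteristic polynomial varies; the only input needed is the single scalar constraint already supplied by Lemma \ref{v perp} or Lemma \ref{Av perp v}.
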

\begin{proof}
By the Localization principle (Corollary \ref{LocPrinCor}), it is enough to show that any $(\tG, \chi)$-equivariant distribution on a fiber $F$ of $\Delta$ at a polynomial $f$ is supported on $R\cap F$ (note that $R$ is $\tG$-invariant).
Let $\xi$ be such a distribution, and let $(A,v)$ be a point in $\mathrm{supp}(\xi)$.
Let us start with the case of $\mathfrak{u}$.

Choose $\omega\in\KK^\times$ with $\bar{\omega}=-\omega$.
Let $g\in\FF[x]$ and consider $g_1(x)=g(x^2)$ and $g_2(x)=\omega x g(x^2)$. They satisfy $\overline{g_1}(-x)=g_1(x),\overline{g_2}(-x)=g_2(x)$. Choose $g$ such that $g_1$ will be coprime to $f$.
We can apply $\rho_{g_1}$ to $\xi$ and extend back to $\mathfrak{g}\times V$ to get that by Lemma \ref{v perp}, $<g_1(A)v,g_1(A)v>=0$. We know this for a Zariski dense subset of polynomials $g\in \FF[x]$, and so for all $g\in \FF[x]$. The same goes for $g_2$. So in particular:
$$<A^{2k}v,v>=\frac{<(A^{2k}+I)v,(A^{2k}+I)v>-<A^{2k}v,A^{2k}v>-<v,v>}{2}=0.$$
\begin{align*}
<A^{2k+1}v,v>=&\omega^{-1}<\omega A^{2k+1}v, v>=\frac{\omega^{-1}}{2}(<(\omega A^{2k+1}+I)v,(\omega A^{2k+1}+I)v>-\\
&-<\omega A^{2k+1}v,\omega A^{2k+1}v>-<v,v>)=0.
\end{align*}
Note that indeed it follows from $A^* = -A$ that $<A^{2k}v, v> = <v, A^{2k}v>$ and that $<\omega A^{2k+1}v,v> = <v,\omega A^{2k+1}v>$.

Now for the case of $\mathfrak{o}$, we still have  $g_1$, and the same proof as before shows that $<A^{2k}v,v>=0$.
But it is always true that 
$$<A^{2k+1}v,v>=<v, -A^{2k+1}v> = -<A^{2k+1}v,v>,$$
and hence $<A^{2k+1}v,v>=0$.

For the case of $\mathfrak{sp}$, we use the same technique but to the condition imposed from Lemma \ref{Av perp v}. This way we get that for a Zariski dense subset of $\FF[x]$ (and thus for all $g\in \FF[x]$) that $<Ag(A^2)v, g(A^2)v> = 0$. From this we are able to get:

\begin{align*}
<A^{2k+1}v,v>=&\frac{1}{2}(<A(A^{2k}+I)v,(A^{2k}+I)v>-\\
&-<A\cdot A^{2k}v,A^{2k}v>-<Av,v>)=0.
\end{align*}
Moreover, it is always true that
$$<A^{2k}v,v>=<v, A^{2k}v> = -<A^{2k}v,v>,$$
and hence $<A^{2k}v,v> = 0$.
\end{proof}

\begin{lemma}
Given $(A,v)\in R$, we have \begin{enumerate}[label=(\roman*)]
\item For any $\lambda\in \KK$ for which $\nu_\lambda$ is defined, $\Delta(\nu_\lambda(A,v))=\Delta(A,v)$.
\item For any $\lambda\in \FF$, $\Delta(\mu_\lambda(A,v))=\Delta(A,v)$.
\end{enumerate}
\end{lemma}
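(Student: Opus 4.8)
The plan is to deduce both statements from Proposition \ref{matrix determinant}. Throughout, write $\phi\in V^*$ for the linear functional $\phi(u):=<u,v>$, so that $\phi_v=v\otimes\phi$ (in the $\mathfrak{o}$ case $\phi$ is $\FF$-linear since $B$ is symmetric bilinear; in the $\mathfrak{u}$ case it is $\KK$-linear since $B$ is linear in the first argument). The hypothesis $(A,v)\in R$ says precisely that $\phi A^k v=<A^kv,v>=0$ for every $k\ge 0$; in particular $<v,v>=0$ and $<Av,v>=0$. Note also that $\Delta$ factors through the $\mathfrak g$-component, and both $\nu_\lambda$ and $\mu_\lambda$ fix $v$, so it suffices to compare characteristic polynomials of the first components.

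For (i): the first component of $\nu_\lambda(A,v)$ is $A+\lambda\phi_v=A+\lambda\,v\otimes\phi$. The condition $\phi A^kv=0$ for all $k\ge 0$ is item (1) of Proposition \ref{matrix determinant}, so item (2) gives $\mathrm{ch}(A+\lambda\,v\otimes\phi)=\mathrm{ch}(A)$ for all $\lambda\in\KK$, in particular for every $\lambda$ for which $\nu_\lambda$ is defined; this is (i).

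For (ii) (so $\mathfrak g=\mathfrak o$ and $\KK=\FF$): first I would record the two identities coming from $(A,v)\in R$, namely $\phi_v^2=<v,v>\phi_v=0$ and $\phi_v A\phi_v=<Av,v>\phi_v=0$ (both are immediate from $\phi_v C\phi_v=<Cv,v>\phi_v$ for any operator $C$). Using $\phi_vA\phi_v=0$, a one-line computation gives
$$(I+\lambda\phi_v)A(I+\lambda\phi_v)=A+\lambda(A\phi_v+\phi_vA),$$
which is exactly the first component of $\mu_\lambda(A,v)$. Since $\mathrm{ch}$ is conjugation-invariant and $\mathrm{ch}(CD)=\mathrm{ch}(DC)$, and since $(I+\lambda\phi_v)^2=I+2\lambda\phi_v$ by $\phi_v^2=0$, we get
$$\Delta(\mu_\lambda(A,v))=\mathrm{ch}\big((I+\lambda\phi_v)A(I+\lambda\phi_v)\big)=\mathrm{ch}\big(A(I+\lambda\phi_v)^2\big)=\mathrm{ch}(A+2\lambda A\phi_v).$$
Finally $A\phi_v=(Av)\otimes\phi$ is a rank-one operator with $\phi A^k(Av)=<A^{k+1}v,v>=0$ for all $k\ge 0$ (again because $(A,v)\in R$), so Proposition \ref{matrix determinant} applied with $Av$ in place of $v$ yields $\mathrm{ch}(A+2\lambda A\phi_v)=\mathrm{ch}(A)=\Delta(A,v)$, proving (ii).

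There is essentially no serious obstacle: the only point requiring care is that $(I+\lambda\phi_v)A(I+\lambda\phi_v)$ is a ``$gAg$'' rather than a genuine conjugation, which is why one must rewrite it as $A(I+\lambda\phi_v)^2=A+2\lambda A\phi_v$ before invoking Proposition \ref{matrix determinant} a second time; everything else is bookkeeping with the relations $\phi_v^2=0$ and $\phi_vA\phi_v=0$ furnished by $(A,v)\in R$.
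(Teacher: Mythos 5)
Your proof is correct. Part (i) coincides with the paper's argument: both simply feed the relations $<A^kv,v>=0$ into Proposition \ref{matrix determinant}. For part (ii) you take a genuinely different, and somewhat slicker, route than the paper. The paper uses the proposition \emph{twice}, iteratively: first it perturbs $A$ by $\lambda A\phi_v=(Av)\otimes\phi$ (using $<A^{k+1}v,v>=0$), and then perturbs $A+\lambda A\phi_v$ by $\lambda\phi_v A$, which requires the extra verification $<(A+\lambda A\phi_v)^kv,A^*v>=0$, carried out by showing $(A+\lambda A\phi_v)^kv=A^kv$. You avoid the second application altogether via the algebraic identity $(I+\lambda\phi_v)A(I+\lambda\phi_v)=A+\lambda(A\phi_v+\phi_vA)$, valid because $\phi_vA\phi_v=<Av,v>\phi_v=0$, and then use $\mathrm{ch}(CD)=\mathrm{ch}(DC)$ together with $(I+\lambda\phi_v)^2=I+2\lambda\phi_v$ (from $\phi_v^2=<v,v>\phi_v=0$) to reduce to the single rank-one perturbation $A+2\lambda A\phi_v$, which is handled by one application of the proposition exactly as in the paper's first step. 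Both arguments rest on the same key input (Proposition \ref{matrix determinant} plus the relations defining $R_A$); yours trades the iterative bookkeeping for the standard fact that $CD$ and $DC$ have the same characteristic polynomial, which is a legitimate and clean simplification, while the paper's version stays entirely within repeated rank-one perturbations. No gaps: the identities $\phi_vC\phi_v=<Cv,v>\phi_v$, the self-adjointness issues, and the final application with $Av$ in place of $v$ are all used correctly (and part (ii) only concerns the $\mathfrak{o}$ case, where the form is symmetric bilinear, so linearity of $\phi$ is not an issue).
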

\begin{proof}
For $\Delta(\nu_\lambda(A,v))$, this follows directly from Proposition \ref{matrix determinant}.
For $\Delta(\mu_\lambda(A,v))$, this also follows from Proposition \ref{matrix determinant}, but with an iterative use:\\
Since $<A^k Av, v>=0$ for all $k\geq 0$, $\Delta(A)=\Delta(A+A\phi_v)$. To prove that we also have $\Delta(A+A\phi_v)=\Delta(A+A\phi_v+\phi_v A)$, we must check that $<(A+A\phi_v)^k v, A^* v>=0$ for all $k\geq 0$. Now for any $k\geq 0$ we have $\phi_v A^k v = <A^k v, v>v = 0$, so
$$<(A+A\phi_v)^k v, A^* v>=<A^k v, A^* v> = <A^{k+1} v, v> = 0.$$
\end{proof}

\section{Stratification}\label{stratification}
For any $g\in \KK [x]$ which is a power of an irreducible polynomial, let $Y_g$ be the subspace of $\mathfrak{g}$ consisting of elements with characteristic polynomial $g$.
By the Localization principle (Corollary \ref{LocPrinCor}), the previous reformulations, and Theorem \ref{HC dec 2}, it is enough to prove that any $(\tG, \chi)$-equivariant distribution on $\Delta^{-1}(g)=Y_g\times V$ is $0$, for any $g$ as above. Let us fix $g$ and prove this claim for it.

We proceed similarly to \cite{AGRS} and \cite{GL case}. The strategy will be to stratify $Y_g$ and restrict stratum by stratum the possible support for a  $(\tG, \chi)$-equivariant distribution (note that $Y_g$ is a union of finitely many $\tG$ orbits). For the unitary case, choose $\omega\in \KK$ s.t. $\bar{\omega}=-\omega $ (in the symplectic case denote $\omega = 1$). For $\lambda\in \FF$, denote by $\eta_\lambda$ either $\nu_{\lambda\omega}$ or $\mu_\lambda$, depending on which case we are in.
\begin{notation}
Denote by $P_i(g)$ the union of all $\tG$-orbits of $Y_g$ of dimension at most $i$, and let $R_i(g):=R\cap (P_i(g) \times V)$. Also, for any open $\tG$-orbit $O$ of $P_i(g)$ set $$\tO:=(O\times V) \cap \bigcap_{\lambda\in\FF}\eta_{\lambda}^{-1}(R_i(g)).$$
Note that $P_i(g)$ are Zariski closed inside $Y_g$, $P_k(g)=Y_g$ for $k$ big enough, and $P_{-1}(g)=\emptyset$.
\end{notation}

We denote by $\Fou_V$ the Fourier transform on $V$ with respect to the non-degenerate $\FF$-bilinear form $(u,v)\mapsto \operatorname{tr}_{\KK/\FF}(<u,v>)$. It will also be used to denote the partial Fourier transform on $V$ when applied to $X\times V$ for some space $X$.
In the cases of $\mathfrak{g} = \mathfrak{o}$ and $\mathfrak{g} = \mathfrak{u}$, $\Fou_V$ commutes with the action of $\tG$. In the case of $\mathfrak{g} = \mathfrak{sp}$, it is not true.
Instead, the action on $\tG$ after applying $\Fou_V$ is compatible with the action of $\tG$ on $V$ by $(g, \delta).v:= gv$ (recall that the usual action of $\tG$ on $V$ is by $(g, \delta).v:= \delta gv$).
Since $-1\in Sp$, we still have that Fourier transform maps $\Sc^*(X\times V)^{H, \tau}$ into itself for any $X\ss \mathfrak{sp}$, any subgroup $H$ of $\tSp$ containing $-1$, and any $\tau\in \{1, \chi\}$.
\begin{claim} \label{orbit}
Let $g\in \KK[x]$ be a polynomial which is a power of an irreducible polynomial $f$ satisfying $f = \pm f^*$.
Let $O$ be an open $\tG$-orbit of $P_i(g)$. Suppose $\xi$ is a $(\tG, \chi)$-equivariant distribution on $O\times V$ such that
$$\mathrm{supp}(\xi) \ss \tO ,$$
and
$$\mathrm{supp}(\Fou_{V}(\xi)) \ss \tO .$$
Then $\xi=0$.
\end{claim}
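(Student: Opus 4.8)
The plan is to reduce Claim \ref{orbit} to an assertion about a single $\tG$-orbit $O$ in $P_i(g)$, and then to apply Frobenius descent along $\rho: O \times V \to O$ (projection onto the first coordinate) to transfer the problem to the fiber over a chosen point $A \in O$, where it becomes a problem about $(\tG_A, \chi)$-equivariant distributions on $\{A\} \times V \cong V$ that are supported, together with their Fourier transforms, on the subspace $R_A \cap \bigcap_\lambda \eta_\lambda^{-1}(R_A)$ (using that $\tO$ restricted to the fiber is exactly this set, and that the automorphisms $\eta_\lambda$ fix $A$). The centralizer $\tG_A$ is, by the analysis in the proof of Theorem \ref{HC dec 3} (and its obvious adaptations to the other cases), a group of the form $\widetilde{U}(V_m)$, $\widetilde{O}(V_m)$, $\widetilde{Sp}(V_m)$, or a product thereof, acting linearly on $V$ over the étale algebra $m = \KK[T]/f(T)$; the key point is that since $f$ is a \emph{power of a single irreducible} polynomial, $V$ is a free $m$-module and the action is essentially irreducible as an $m$-module up to the block structure.

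The heart of the argument should then be the following: the subspace $\mathrm{supp}(\xi)|_{\text{fiber}}$ is contained in the common zero locus of the $\tG_A$-invariant quadratic-type functions $v \mapsto \langle A^k v, v\rangle$ and their $\eta_\lambda$-translates, and simultaneously $\Fou_V(\xi)$ is supported there too. One then observes that $R_A$ (resp.\ its intersection with the $\eta_\lambda$-translates) is not the whole of $V$ and has positive codimension — this uses the explicit description of $R_A$ together with the fact, coming from the $\rho_g$-automorphism machinery and Lemma \ref{v perp}/\ref{Av perp v}, that for generic $v$ the element $(A,v)$ is not in $R$. A distribution on $V$ which, along with its Fourier transform, is supported on a proper closed conic-type subset that is \emph{self-dual enough}, can be shown to vanish by a homogeneity/Weil-representation argument exactly as in \cite[Sections 5--6]{AGRS}, \cite{Sun}, and \cite[Section 6]{GL case}: one invokes the Fourier-transform-homogeneity dichotomy for $\mathrm{Stab}$-invariant distributions, uses that the relevant set is isotropic for the pairing, and concludes via an induction on $\dim V_m$ (which is strictly smaller than $n$, so the outer induction hypothesis of Theorems \ref{main goal} and \ref{main goal for Lie Alg} applies to $V_m$). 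I would also use Corollary \ref{Product} to split off the distinct simple blocks of $A$.

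The main obstacle I expect is the step where one must show that the constraint ``$\mathrm{supp}(\xi)$ and $\mathrm{supp}(\Fou_V \xi)$ both lie in $\widetilde{O}$'' genuinely forces vanishing — i.e.\ verifying that the set $\widetilde O$ intersected with a fiber is isotropic (or at least ``small'' in the precise sense needed for the Fourier-homogeneity argument) and that it is compatible with the partial Fourier transform on $V$, including the twisted compatibility in the symplectic case noted just before the claim (where $\Fou_V$ intertwines the two actions of $\tG$ on $V$ differing by $\chi$). This is where the interplay between the automorphisms $\nu_\lambda, \mu_\lambda$, the polynomial-twist automorphisms $\rho_g$, and the quadratic conditions defining $R$ must be combined carefully; once that geometric input is in place, the distributional endgame is a routine adaptation of the characteristic-zero arguments, and the positive-characteristic subtleties (inseparable $f$, handled by Theorem \ref{HC dec 3}) have already been quarantined into the case analysis of Section \ref{HC dec}.
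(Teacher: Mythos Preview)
Your Frobenius-descent reduction to the fiber over a fixed $A$ is right and matches the paper, but the endgame you sketch has a genuine gap. You propose to identify $\tC_A$ with some $\widetilde{U}(V_m)$ (etc.) over $m=\KK[T]/f(T)$ and then invoke the outer induction hypothesis because $\dim_m V_m < n$. This fails precisely in the most important case: when $f$ is linear (in particular the nilpotent case $f(x)=x$, which is exactly what survives after Sections~\ref{HC dec}--\ref{automorphisms}), one has $m=\KK$ and $\dim V_m = n$, so there is no dimension drop and no induction to appeal to. The description of $\tC_A$ you borrow from the proof of Theorem~\ref{HC dec 3} also assumed $A$ semisimple; here $A$ is typically a single (non-semisimple) block and its centralizer is not one of the classical groups over $m$.

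What the paper actually does at this point is different and more concrete. First it introduces the set
\[
Q_A=\{v: \phi_v\in [A,\mathfrak g]\}\quad\text{(resp.\ }A\phi_v+\phi_v A\in [A,\mathfrak g]\text{ in the orthogonal case)},
\]
and shows via a tangent-space argument (Proposition~\ref{inside Q}) that the fiber of $\tO$ over $A$ lies in $Q_A$; this is the step that converts the family of $\eta_\lambda$-conditions into a single linear constraint. It then reduces Claim~\ref{orbit} to: every $C_A$-invariant distribution on $V$ whose support and Fourier support lie in $Q_A$ is already $\tC_A$-invariant (``$A$ is nice''). Niceness is stable under orthogonal direct sums, and via the block decomposition of Proposition~\ref{full classification} one is reduced to checking each \emph{simple} block type by hand. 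For a simple non-split block one uses the self-dual filtration $V^i=\ker f(A)^i$ to see that $Q_A\subseteq V^{\lfloor d/2\rfloor}$, and a distribution supported on $V^{\lfloor d/2\rfloor}$ with Fourier transform supported there is either zero (odd $d$) or a multiple of the Lebesgue measure on $V^{d/2}$ (even $d$), hence visibly $s$-invariant. The remaining nilpotent block types are handled by short explicit computations. So the missing ingredients in your outline are (i) the set $Q_A$ and the tangent-space inclusion, and (ii) the direct block-by-block filtration analysis in place of the inapplicable inductive step.
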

This claim will be proven in the next section.\\
Let us now show how it implies the main theorems. Recall that Theorem \ref{main goal for Lie Alg}, which states that any $(\tG, \chi)$-equivariant distribution on $\mathfrak{g}\times V$ is 0,
implies Theorems \ref{G(V), G(W) dist} and  \ref{J dist}. This is by virtue of Theorem \ref{linearization} and what is shown in Section \ref{reformulations}.
\begin{proof}[Proof of Theorem \ref{main goal for Lie Alg}]
We prove the following claim by downward induction - any $(\tG, \chi)$-equivariant distribution on $\Delta^{-1}(g)$ is supported inside $R_i(g)$. This claim for $i$ big enough follows from Proposition \ref{inside R}, and the claim for $i=-1$ implies Theorem \ref{main goal for Lie Alg} by the localization principle (Corollary \ref{LocPrinCor}) and Proposition \ref{HC dec 2}, as already explained in the top of this section.
For the induction step, take such a distribution $\xi$. As $P_i(g)\setminus P_{i-1}(g)$ is a disjoint union of open orbits, it is enough to show that the restriction of $\xi$ to any $O\times V$, where $O$ is an open orbit of $P_i(g)$, is zero. Let $\zeta=\xi|_{O\times V}$ be such a restriction.
By the induction hypothesis applied to $\eta_{\lambda}(\xi)$, we know that  $\mathrm{supp}(\zeta) \ss \tO$ and similarly $\mathrm{supp}(\Fou_{V}(\zeta)) \ss \tO$. Hence by Claim \ref{orbit}, $\zeta=0$.
\end{proof}

\section{Handling a single stratum - proof of Claim \ref{orbit}}\label{single stratum}
\subsection{Nice operators}
This subsection closely follows \cite[Section 6]{AGRS} and \cite[Subsection 4.3]{GL case} but we give it here for completeness.

\begin{notation}
For $A\in \mathrm{gl}(V)$, set in the cases $\mathfrak{g} = \mathfrak{u}, \mathfrak{sp}$:
$$Q_A:=\{v \in V | \phi_v \in [A,\mathfrak {g} (V)]\}.$$
In the case $\mathfrak{g} = \mathfrak{o}$, set:
$$Q_A:=\{v \in V | A\phi_v + \phi_v A \in [A,\mathfrak {g} (V)]\}.$$
Here $[B,C]:=BC-CB$ is the Lie bracket, and $[A,\mathfrak {g} (V)]=\{[A,B]|B\in \mathfrak {g} (V)\}$.  
\end{notation}

\begin{proposition}\label{inside Q}
If $(A,v)\in\tO$ then $v\in Q_A$.
\end{proposition}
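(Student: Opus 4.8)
The plan is to use only one consequence of $(A,v)\in\tO$ — that a certain explicit affine line through $A$ stays inside $P_i(g)$ — and then to read off the conclusion by differentiating that line against the tangent space of the orbit $O$.

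Concretely, put $\psi:=\phi_v$ if $\mathfrak g=\mathfrak{sp}$, $\psi:=\omega\phi_v$ if $\mathfrak g=\mathfrak u$, and $\psi:=A\phi_v+\phi_v A$ if $\mathfrak g=\mathfrak o$; in each case $\psi\in\mathfrak g(V)$ and $\eta_\lambda(A,v)=(A+\lambda\psi,v)$ for all $\lambda\in\FF$. Since $(A,v)\in\tO$, for every $\lambda\in\FF$ we have $\eta_\lambda(A,v)\in R_i(g)\ss P_i(g)\times V$, so $A+\lambda\psi\in P_i(g)$. As the map $\lambda\mapsto A+\lambda\psi$ is continuous into $P_i(g)$ and sends $0$ to $A\in O$, and $O$ is open in $P_i(g)$, there is a neighborhood $U$ of $0$ in $\FF$ with $A+\lambda\psi\in O$ for all $\lambda\in U$. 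Now $O$, being an orbit of the conjugation action of $\tG$, is a smooth locally closed subvariety of $\mathfrak g(V)$, hence an analytic submanifold near $A$ with $T_A O=[A,\mathfrak g(V)]$ (justified below). Since the polynomial curve $\lambda\in U\mapsto A+\lambda\psi$ lies in this submanifold and has velocity $\psi$ at $\lambda=0$, we conclude $\psi\in[A,\mathfrak g(V)]$. Unwinding the three cases, this is exactly $v\in Q_A$ (for $\mathfrak g=\mathfrak{sp}$ directly, with the evident $\omega$-adjustment for $\mathfrak g=\mathfrak u$, and for $\mathfrak g=\mathfrak o$ it reads $A\phi_v+\phi_v A\in[A,\mathfrak g(V)]$).

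The only point needing genuine justification — and the only place this differs from the characteristic-$0$ argument of \cite[\S6]{AGRS} — is the identification $T_A O=[A,\mathfrak g(V)]$, equivalently the smoothness of the stabilizer $G_A$ of $A$ in $G$. The differential at the identity of $g\mapsto gAg^{-1}$ is $X\mapsto[X,A]$, with image $[A,\mathfrak g(V)]$ and kernel the centralizer $\mathfrak g_A=\operatorname{Lie}(G_A)$; hence $[A,\mathfrak g(V)]\ss T_A O$ always, and $\dim[A,\mathfrak g(V)]=\dim\mathfrak g(V)-\dim\operatorname{Lie}(G_A)\le\dim G-\dim G_A=\dim O=\dim T_A O$, with equality throughout iff $G_A$ is smooth. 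Smoothness holds here because $A^*=-A$ forces the centralizer of $A$ in $GL(V)$ — itself smooth, being the unit group of a finite-dimensional algebra — to be stable under the order-two (anti)automorphism cutting out $G(V)$, and a fixed-point subgroup of an involution of a smooth group scheme is smooth when the characteristic is odd (for $SO$ one intersects further with $\ker\det$, which preserves smoothness). Alternatively the equality $\dim[A,\mathfrak g(V)]=\dim O$ can be read off from the block decomposition of Proposition \ref{full classification}. I expect this smoothness/tangent-space step to be the main obstacle in a fully self-contained write-up; everything else above is formal.
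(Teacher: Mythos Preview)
Your argument is correct and follows essentially the same route as the paper: one shows the affine line $\lambda\mapsto A+\lambda\psi$ stays in $P_i(g)$, hence in the open orbit $O$ for small $\lambda$, and therefore its velocity $\psi$ lies in the tangent space $T_A O=[A,\mathfrak g(V)]$. Your careful justification of the tangent-space identification via smoothness of the stabilizer in odd characteristic is more than the paper provides---it simply asserts that the Zariski tangent space is $[A,\mathfrak g(V)]$---so your write-up is in fact more complete on this point.
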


\begin{proof}
Consider a point $(A,v)\in \tO$.
The Zariski tangent space to $O$ at $A$ is $[A,\mathfrak {g} (V)]$. Denote by $A_\lambda$ the operator $A+\lambda\omega \phi_v$ in the unitary case, $A+\lambda\phi_v$ in the symplectic case, or $A + \lambda(A \phi_v + \phi_v A)$ in the orthogonal case. Since $A_\lambda$ is contained in $O$ for $\lambda$ small enough (as $\eta_{\lambda}$ keeps $A$ inside $P_i(g)$, in which $O$ is open), we get that $\phi_v\in [A,\mathfrak {g} (V)]$ (or $A\phi_v + \phi_v A \in [A, \mathfrak{g}(V)]$ in the orthogonal case).
\end{proof}

\begin{theorem} \label{QA-RA}
Unless we are in the case $\mathfrak{g} = \mathfrak{o}$ and the characteristic polynomial of $A$ is equal to $x^n$, we have $Q_A \ss R_A$. In this case, we still have $<A^{k}v, v>=0$ for any $v\in Q_A$ and $k\geq 1$.
\end{theorem}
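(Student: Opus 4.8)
The plan is to reduce the statement to the elementary trace identity
$$\operatorname{tr}(X\phi_v)=<Xv,v>\qquad(X\in\mathrm{gl}(V)),$$
which holds because $X\phi_v$ is the rank one map $u\mapsto <u,v>(Xv)$, together with the cyclicity of the trace, which gives $\operatorname{tr}(A^k[A,B])=\operatorname{tr}(A^{k+1}B)-\operatorname{tr}(A^{k}BA)=0$ for every $k\ge 0$ and every $B$. (In the unitary case $v\in Q_A$ really means $\omega\phi_v\in[A,\mathfrak{g}(V)]$ with $\bar\omega=-\omega$, so that the operator lies in $\mathfrak{g}$; this only rescales $\phi_v$ by a fixed nonzero scalar and is irrelevant for vanishing statements, so I suppress $\omega$ below.)

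First I would treat $\mathfrak{g}=\mathfrak{u}$ and $\mathfrak{g}=\mathfrak{sp}$: if $v\in Q_A$ then $\phi_v=[A,B]$ for some $B\in\mathfrak{g}(V)$, hence $<A^kv,v>=\operatorname{tr}(A^k\phi_v)=\operatorname{tr}(A^k[A,B])=0$ for all $k\ge 0$, which is exactly $v\in R_A$. Next, for $\mathfrak{g}=\mathfrak{o}$ and $v\in Q_A$ we have $A\phi_v+\phi_vA=[A,B]$ with $B\in\mathfrak{o}(V)$; applying $\operatorname{tr}(A^k\,\cdot\,)$ to both sides and using cyclicity of the trace gives
$$2<A^{k+1}v,v>=\operatorname{tr}\big(A^{k}(A\phi_v+\phi_vA)\big)=\operatorname{tr}(A^k[A,B])=0$$
for all $k\ge 0$, and since $\operatorname{char}\FF\ne 2$ this is precisely the second assertion of the theorem, $<A^mv,v>=0$ for all $m\ge 1$.

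It then remains, in the orthogonal case, to recover the missing degree-zero term $<v,v>$ in order to conclude $v\in R_A$. For this I would use that in the present setting the characteristic polynomial of $A$ is a power of an irreducible polynomial $f$ (Theorem \ref{HC dec 2}, invoked at the beginning of Section \ref{stratification}). If $f\ne x$ then $0$ is not an eigenvalue of $A$, so $A$ is invertible; by the Cayley--Hamilton theorem we may then write $I=\sum_{m=1}^{n}b_mA^m$ with vanishing constant term, whence $<v,v>=<Iv,v>=\sum_{m\ge 1}b_m<A^mv,v>=0$ and $v\in R_A$. The only case left out is $f=x$, i.e.\ characteristic polynomial $x^n$ and $A$ nilpotent — exactly the exception in the statement, where there is no invertibility to exploit. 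I expect the only real (and quite mild) subtlety to be this last point: the symmetrized bracket $A\phi_v+\phi_vA$ shifts the trace identity so that it controls only $<A^mv,v>$ for $m\ge 1$, and one genuinely needs $A$ invertible — hence the reduction to prime-power characteristic polynomials, which identifies non-invertibility with nilpotency — in order to retrieve $<v,v>$.
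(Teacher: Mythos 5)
Your proposal is correct and follows essentially the same route as the paper: the identity $\tr(A^k\phi_v)=<A^kv,v>$ together with cyclicity of the trace gives $v\in R_A$ in the unitary and symplectic cases, and in the orthogonal case gives $<A^kv,v>=0$ for $k\geq 1$ only. Your recovery of $<v,v>$ when the characteristic polynomial is not $x^n$ (writing the identity as a polynomial in $A$ with no constant term, using that the characteristic polynomial is a power of an irreducible $f\neq x$ so $A$ is invertible) is exactly the paper's step ``there is a polynomial $h$ with $Ah(A)=\operatorname{Id}$,'' so nothing further is needed.
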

\begin{proof}
For the unitary and symplectic cases:
Assume that $\phi_v= [A,B]$, for some $B \in G$. Then
$$<A^k v, v> = \tr A^k \phi_v = \tr[A, A^k B] = 0.$$
For the orthogonal case:
Assume that $A\phi_v + \phi_v A = [A,B]$, for some $B \in G$. Then
\begin{equation*}
\begin{split}
\tr A^k A\phi_v & =\tr A^k \phi_v A = \frac{\tr (A^{k+1}\phi_v + A^k \phi_v A)}{2} = \frac{\tr A^{k}(AB - BA)}{2}=\frac{\tr[A, A^{k+1}B]}{2}=\\
&=0.
\end{split}
\end{equation*}
Now $\tr A^k A\phi_v=<A^{k+1}v, v>$, so we know for any $k\geq 1$ that $<A^{k}v, v>=0$. If the characteristic polynomial $g$ (which is a power of an irreducible polynomial) is not a power of $x$, then there is a polynomial $h(A)$ s.t. $Ah(A) = \operatorname{Id}$. This implies that $<v, v>=<Ah(A)v, v> = 0$, and so $v\in R_A$.
\end{proof}

\begin{notation}
Let $A \in \mathfrak{g}$. We denote by
$C_A$ the stabilizer of $A$ in $G$ and by $\tC_A$ the stabilizer of $A$ in $\tG$.
\end{notation}
\begin{lemma} \label{tc}
For any $A\in\mathfrak{g}$, $\tC_A\neq C_A$.
\end{lemma}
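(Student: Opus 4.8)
Lemma \ref{tc} asserts that for any $A \in \mathfrak{g}$, the stabilizer $\tC_A$ in $\tG$ strictly contains the stabilizer $C_A$ in $G$; equivalently, there exists an element of $\tG \setminus G$ fixing $A$. Since $C_A$ has index at most $2$ in $\tC_A$ (because $G$ has index $2$ in $\tG$), the claim is simply that this index is exactly $2$, i.e. that some $(T,-1) \in \tG$ satisfies $T A T^{-1} = -A$ (in the Lie algebra case), which amounts to finding an $\FF$-linear automorphism $T$ of $V$ with $\langle Tu, Tv\rangle = \langle v, u\rangle$ and $TAT^{-1} = -A$ (plus the determinant condition in the $SO$ case).

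The plan is to use the block decomposition of $A$ provided by Proposition \ref{full classification}: $A$ is an orthogonal direct sum of simple split, simple non-split, and simple (even/odd) nilpotent blocks. Since an orthogonal direct sum of such anti-involutions $T_i$ on the summands gives the desired $T$ on $V$, it suffices to construct $T$ on each block type. For a simple split block $V = V' \oplus \overline{V'^*}$ with $A$ acting as $A' \oplus A''$ and $\langle A'u, v\rangle = \langle u, -A''v\rangle$, one takes $T$ to be a suitable map swapping the two isotropic pieces; the relation $\langle Tu, Tv \rangle = \langle v, u\rangle$ together with $TAT^{-1} = -A$ follows from the defining compatibility of $A'$ and $A''$. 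This is essentially the observation cited from \cite[Chapter 4, Proposition 1.2]{MVW}: every element of $\mathfrak{g}$ (or $G$) is ``$\tG$-conjugate to its negative (or inverse)''. For the simple non-split and nilpotent blocks, one reads off the desired $T$ from the explicit bases given in Definition \ref{block types}: on a nilpotent block with basis $e, Ae, \dots, A^{d-1}e, f, Af, \dots, A^{d-1}f$, the map sending $A^i e \mapsto (-1)^i A^i e$, $A^i f \mapsto (-1)^i A^i f$ visibly conjugates $A$ to $-A$, and one checks against the listed formulas for $\langle A^i e, A^j f\rangle$ that it also intertwines the form with its transpose.

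For the $SO$ case there is the extra constraint $\det T = (-1)^{\lfloor (n+1)/2 \rfloor}$ built into the definition of $\tG$; here one must verify that the $T$ constructed above (or an adjustment of it by an element of $O(V)$ commuting with $A$, which does not affect $TAT^{-1} = -A$) can be arranged to have the correct determinant. This is possible because the centralizer $C_A$ in $O(V)$ always contains elements of determinant $-1$ unless $A$ forces otherwise, and in the problematic cases one can instead rescale a non-split block's $T$; this bookkeeping of determinants is the main technical nuisance. Alternatively, and perhaps more cleanly, one invokes \cite[Chapter 4, Proposition 1.2]{MVW} directly for the $O$/$Sp$/$U$ cases and then checks separately that the determinant of the intertwiner can be normalized in the $SO$ setting.

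The step I expect to be the main obstacle is precisely this determinant normalization in the $SO$ case: while the existence of \emph{some} $T \in \mathrm{Aut}_\FF(V)$ with $\langle Tu,Tv\rangle = \langle v,u\rangle$ and $TAT^{-1} = -A$ is immediate from the block decomposition, ensuring one can simultaneously hit the prescribed value of $\det T$ requires either exhibiting an explicit $T$ with that determinant on each block or arguing that $C_A \cap O(V)$ surjects onto $\{\pm 1\}$ via $\det$ often enough to correct it. One subtle point worth isolating: if every block of $A$ were such that any form-transpose intertwiner has determinant $+1$ and the target parity is $-1$, the claim could fail — so part of the proof is checking this never happens for $A \in \mathfrak{so}(V)$, using that $\det(TAT^{-1}) = \det(-A) = (-1)^n \det A$ and the relation between $n$ and $\lfloor (n+1)/2\rfloor$.
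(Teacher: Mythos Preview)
Your plan coincides with the paper's: cite \cite[Chapter 4, Proposition 1.2]{MVW} for the $O/U/Sp$ cases, and for $SO$ decompose $A$ into simple blocks via Proposition \ref{full classification}, build $T$ blockwise, then control $\det T$. So the strategy is correct and essentially the same.

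There is one concrete slip. Your proposed $T$ on a nilpotent $2d$-block, namely $A^i e \mapsto (-1)^i A^i e$ and $A^i f \mapsto (-1)^i A^i f$, does \emph{not} satisfy $\langle Tu,Tv\rangle = \langle v,u\rangle$. Indeed on a simple even nilpotent block ($\mathfrak{o}$, $d$ even) one has, for $i+j=d-1$,
\[
\langle T A^i e,\, T A^j f\rangle = (-1)^{i+j}\langle A^i e, A^j f\rangle = (-1)^{d-1}\langle A^i e, A^j f\rangle = -\langle A^i e, A^j f\rangle,
\]
whereas the target (symmetric form) is $\langle A^j f, A^i e\rangle = +\langle A^i e, A^j f\rangle$. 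The easy fix, and what the paper does, is to put the extra sign on the $f$-chain: $A^i e \mapsto (-1)^i A^i e$, $A^i f \mapsto (-1)^{i+1} A^i f$. Then both $TAT^{-1}=-A$ and the form condition hold, and moreover $\det T = (-1)^{d} = (-1)^{\dim V/2}$.

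On the $SO$ determinant normalization that you flag as the main obstacle: the paper's resolution is clean and avoids your worry entirely. One checks blockwise that on every \emph{even}-dimensional simple block (split, non-split non-nilpotent, even nilpotent) the constructed $T_i$ has $\det T_i = (-1)^{n_i/2}$. Hence if all blocks are even-dimensional then $n$ is even and $\det T = (-1)^{n/2} = (-1)^{\lfloor (n+1)/2\rfloor}$ automatically. The only odd-dimensional blocks are non-split nilpotent blocks with $d$ odd, and for those one may replace $T_i$ by $-T_i$ (still in $O$, still anticommuting with $A$) to flip the sign of $\det T$ at will. So the bad scenario you describe, where every block rigidly forces a single determinant and the target parity is unreachable, never occurs.
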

\begin{proof}
We give here the proof for all cases except $G=SO$. The proof for $G=SO$ is given in Appendix \ref{SO centralizer}. By \cite[Chapter 4, Proposition 1.2.]{MVW}, There exists an $\FF$-linear map $T:V\to V$ which satisfies $TAT^{-1} = -A$, and such that for any $u,v\in V$, we have that $<Tu, Tv> = <v, u>$ (this condition implies that $s\lambda u = \overline{\lambda u}$).
Consider $s=(T,-1)$ as an element of $\tG$. Then $s.A= -TAT^{-1} = A$. Thus $s\in \tC(A)\setminus C_A$.
\end{proof}
Thus the $\tG$-orbit of $A$ is equal to its $G$-orbit.
It is known that $C_A$ is unimodular and hence $\tC_A$ is
also unimodular.
Claim \ref{orbit} follows now from Frobenius descent (Theorem \ref{Frob}), Proposition \ref{inside Q} and the following proposition.

\begin{proposition}
Let $A \in \mathfrak{g}$. Let $\eta \in \Sc^*(V)^{C_A}$. Suppose that both $\eta$ and
$\Fou_{V}(\eta)$ are supported in $Q_A$. Then $\eta \in \Sc^*(V)^{\tC_A}$.
\end{proposition}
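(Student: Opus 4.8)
The plan is to reduce the statement to a vanishing assertion, strip off the easy parts of the operator, and then settle the nilpotent (and inseparable) core along the lines of \cite[Section 6]{AGRS} and \cite[Subsection 4.3]{GL case}. First I would use Lemma \ref{tc} to write $\tC_A = C_A \sqcup C_A s$ with $s = (T,-1)$ and $s.A = A$. Since $s$ normalizes $C_A$, since $s$ preserves $Q_A$ (which depends only on $A$), and since $\Fou_V$ carries the $s$-action to one preserving the same equivariance type — using, in the symplectic case, that $-\operatorname{Id} \in C_A$, as in the remark before Claim \ref{orbit} — the distribution $\eta - s_*\eta$ still satisfies both support hypotheses and is $(\tC_A,\chi)$-equivariant. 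Hence it suffices to show that every $(\tC_A,\chi)$-equivariant $\eta \in \Sc^*(V)$ with $\supp\eta,\ \supp\Fou_V(\eta) \ss Q_A$ vanishes.

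Next I would decompose $V$ into $A$-stable blocks via Proposition \ref{full classification} and group the blocks according to the irreducible polynomial attached to them, pairing $f$ with $f^*$; this gives an orthogonal decomposition $V = V_1 \perp \dots \perp V_k$ into $A$-stable pieces with pairwise unrelated spectra. By Lemma \ref{eigenvalues perp} the cross $\operatorname{Hom}$-spaces vanish, so $C_A = \prod_j C_{A|_{V_j}}$ and $[A,\mathfrak g(V)] = \bigoplus_j [A|_{V_j}, \mathfrak g(V_j)]$, whence $Q_A = \bigoplus_j Q_{A|_{V_j}}$, while $\Fou_V$ factors as a product of the $\Fou_{V_j}$ and $\tC_A$ sits in $\prod_j \tC_{A|_{V_j}}$ containing $\prod_j C_{A|_{V_j}}$ as the expected index-$2$ diagonal subgroup. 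Corollary \ref{Product} then lets me treat the pieces one at a time, so I may assume $\operatorname{ch}(A) = f^s$ with $f$ irreducible and $f = \pm f^*$ — precisely the setting of Section \ref{stratification} — and, by Proposition \ref{full classification} again, that $A$ is an orthogonal sum of simple non-split blocks together with, when $f = x$, simple even (for $\mathfrak o$) or odd (for $\mathfrak{sp}$) nilpotent blocks.

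If $\deg f > 1$ and $f$ is separable, then passing to the field $m = \KK[x]/(f)$ with its induced involution realizes the problem on $V$ as one of the same kind over $m$ in strictly smaller $\FF$-dimension (via the centralizer description of $C_A$ and the identification of $Q_A$ with the corresponding set over $V_m$), so it is handled by the induction underlying the whole argument. A linear $f \neq x$ (possible only for $\mathfrak u$) is reduced to $f = x$ by subtracting the scalar part. What remains is thus: $A$ a sum of nilpotent blocks and/or blocks with $f$ inseparable — the latter being the genuine novelty over positive characteristic, where Jordan decomposition fails and no reduction via a field extension is available (this is where the characteristic-$0$ proofs break; cf. Theorem \ref{HC dec 3}, and the role of the automorphisms of Section \ref{automorphisms}). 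For these $A$ one analyses $Q_A$ block by block.

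For every block type except the simple even nilpotent blocks of $\mathfrak o$ and the simple odd nilpotent blocks of $\mathfrak{sp}$, $Q_A$ turns out to be thin — using Theorem \ref{QA-RA}, and for the non-split blocks that $f(0) \neq 0$ — small enough that the fact that both $\eta$ and $\Fou_V(\eta)$ are supported on $Q_A$, combined with a homogeneity argument for the dilation action on $V$ (as in \cite{AGRS}, \cite{Wald}), forces $\eta = 0$. The main obstacle is the remaining case, where $Q_A$ is essentially all of $V$ so the support hypotheses are vacuous: here vanishing has to come from the fact that $\eta$ is genuinely $(\tC_A,\chi)$-equivariant and not merely $C_A$-invariant, by computing the possible homogeneity weight of such an $\eta$ and showing it is incompatible with the character $\chi$ (in the symplectic case this invokes the Gauss-sum / oscillator-representation input following \cite{Sun}). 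Over positive characteristic the only additional checks are that these weight normalizations survive small characteristic and that no inseparable factor can occur inside a nilpotent block — both true — so that the arguments of \cite{AGRS}, \cite{Wald}, \cite{Sun}, \cite{GL case} transfer with only cosmetic changes.
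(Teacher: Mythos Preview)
Your opening reduction to the vanishing of $(\tC_A,\chi)$-equivariant distributions and the block decomposition are fine and match what the paper does (the paper packages the block step as ``nice is closed under orthogonal direct sums,'' Lemma~\ref{DirectSum}). The genuine gap is in your analysis of the individual blocks.

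First, your key geometric claim is false. For the simple even nilpotent blocks in $\mathfrak o$ and the simple odd nilpotent blocks in $\mathfrak{sp}$, $Q_A$ is \emph{not} essentially all of $V$. The paper computes it explicitly: writing $V = E\oplus F$ with $E=\Span(A^i e)$, $F=\Span(A^i f)$, one introduces the swap $P$ ($PA^ie=A^if$, $PA^if=A^ie$) and uses the trace identities $\tr((P\phi_{v,v}A^k)|_E)=0$ (resp.\ with an extra $A$ in the orthogonal case) to get $\langle A^k Pv_F, v_F\rangle = 0$ for the relevant $k$. This forces $Q_A\subseteq E_2\oplus F_2$ (half-dimensional) in the even case and $Q_A\subseteq E_3\oplus F_3$ (strictly less than half) in the odd case. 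So the support hypotheses are very much not vacuous: they pin $\eta$ down to a multiple of Lebesgue on $E_2\oplus F_2$ in the even case, and to $0$ outright in the odd case. No homogeneity weights, no oscillator/Gauss-sum input is used or needed; one simply checks that the specific $s=(T,-1)$ exhibited in the paper fixes that Lebesgue measure.

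Second, the separable/inseparable dichotomy you introduce for simple non-split blocks is a detour. The paper's argument for these is uniform in $f$: with $V^i=\ker f(A)^i$ one has $Q_A\subseteq R_A=V^{\lfloor d/2\rfloor}$ by Theorem~\ref{QA-RA}, and the support conditions on $\eta$ and $\Fou_V(\eta)$ give either $\eta=0$ (odd $d$, via a shift by $v\in V^{\lceil d/2\rceil}\setminus V^{\lfloor d/2\rfloor}$) or $\eta$ a multiple of Lebesgue on $V^{d/2}$ (even $d$), whose $s$-invariance is then a two-line check. Your proposed descent to $m=\KK[x]/(f)$ would at best reprove this, and it does not obviously interact well with the definition of $Q_A$ (which is phrased via $\phi_v$ and $[A,\mathfrak g(V)]$ over $\FF$, not over $m$); the induction hypothesis you want to invoke is about $\mathfrak g\times V$, not about this proposition.

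In short: replace the last two paragraphs of your plan by (i) the trace computation with the swap $P$ to bound $Q_A$ for the double-generator nilpotent blocks, and (ii) the uniform filtration argument for simple non-split blocks; then finish each case by either reading off $\eta=0$ from the support bound, or by exhibiting an explicit $s$ fixing the remaining Lebesgue measure.
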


\begin{definition}
Call an element $A\in \mathfrak{g}$ 'nice' if the previous proposition holds for $A$. Namely, $A$ is 'nice' if any distribution $\eta \in \Sc^*(V)^{C_A}$ such that both $\eta$ and $\Fou(\eta)$ are supported in $Q_A$ is also $\tC_A$-invariant.
\end{definition}

\begin{lemma} \label{DirectSum}
Let $A_1 \in \mathfrak{g}(V_1)$ and $A_2 \in \mathfrak{g}(V_2)$ be nice. Then $A_1 \oplus A_2 \in \mathfrak{g}(V_1\oplus V_2)$ is nice.
\end{lemma}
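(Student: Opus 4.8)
The plan is to reduce the "nice" property for a direct sum to that of the summands by a Fourier–theoretic separation of variables, in the spirit of the analogous lemma in \cite{AGRS} and \cite{GL case}. First I would unwind the definitions: writing $V = V_1 \oplus V_2$ and $A = A_1 \oplus A_2$, the centralizer $C_A$ contains $C_{A_1} \times C_{A_2}$, and for $v = (v_1, v_2)$ the operator $\phi_v$ (or $A\phi_v + \phi_v A$ in the orthogonal case) splits compatibly with the decomposition $\mathfrak{g}(V) \supseteq \mathfrak{g}(V_1) \oplus \mathfrak{g}(V_2)$, using that $V_1 \perp V_2$ with respect to $B$ (this orthogonality is what makes $A_1 \oplus A_2$ live in $\mathfrak{g}(V_1 \oplus V_2)$ in the first place, cf. Proposition \ref{full classification} and Lemma \ref{eigenvalues perp}). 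The key point to extract is that $Q_A \supseteq Q_{A_1} \times Q_{A_2}$, or at least that membership of $(v_1, v_2)$ in $Q_A$ forces $v_1 \in Q_{A_1}$ and $v_2 \in Q_{A_2}$ after projecting the bracket relation onto each block; and likewise that the relevant Fourier transform $\Fou_V$ factors as $\Fou_{V_1} \otimes \Fou_{V_2}$ since the $\FF$-bilinear form $\mathrm{tr}_{\KK/\FF}\langle\cdot,\cdot\rangle$ is the orthogonal sum of its restrictions.

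With these compatibilities in hand, I would take $\eta \in \Sc^*(V)^{C_A}$ with both $\eta$ and $\Fou_V(\eta)$ supported in $Q_A$, and argue by descent on one factor at a time. Restricting the support to $Q_{A_1} \times Q_{A_2}$ and using that $\eta$ is in particular $C_{A_1}$-invariant (viewing $C_{A_1}$ as acting trivially on $V_2$), one applies the hypothesis that $A_1$ is nice "fiberwise over $V_2$" — formally this is a Frobenius/localization argument: decompose $\eta$ over the $V_2$-coordinate, or use that nice-ness of $A_1$ together with Corollary \ref{Product} propagates invariance under $\tC_{A_1}$ acting on the first factor. Then repeat with the second factor to get invariance under $\tC_{A_2}$ on the second factor. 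The subtlety is that $\tC_A$ is in general larger than $\tC_{A_1} \times \tC_{A_2}$ (an element of $\tG \setminus G$ swapping or mixing blocks, or the element $s = (T,-1)$ from Lemma \ref{tc}); one must check that invariance under $C_A$ plus invariance under $\tC_{A_1} \times \tC_{A_2}$ on the respective factors already generates all of $\tC_A$, which holds because $\tC_A$ is generated by $C_A$ together with a single element of $\tC_{A_1}\setminus C_{A_1}$ (or $\tC_{A_2}\setminus C_{A_2}$), since $\tC_A / C_A \cong \ZZ/2$ and such an element lies in $\tC_A$ by Lemma \ref{tc} applied to a block.

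The main obstacle I anticipate is the Fourier-transform bookkeeping in the orthogonal and symplectic cases: one needs $\Fou_V(\eta)$ supported in $Q_A$ to also descend correctly, and since $\Fou_V$ does not commute with the full $\tG$-action in the symplectic case (only with the modified action, as noted before Claim \ref{orbit}), the induction must be set up so that at each stage one tracks the pair $(\eta, \Fou_V \eta)$ simultaneously and applies nice-ness of $A_i$ to both. A clean way to handle this is to prove the stronger statement that the space of $C_{A_1}$-invariant distributions on $V_1$ whose Fourier transform is supported in $Q_{A_1}$, tensored appropriately over $V_2$, is $\tC_{A_1}$-invariant — i.e. run the argument at the level of the distributional kernel — but I expect the cleanest route in the paper's framework is the iterated Frobenius descent sketched above, exactly paralleling the characteristic-zero proof, since none of those steps used separability.
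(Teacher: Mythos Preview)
Your proposal is correct and is precisely the approach the paper intends: the paper's own proof consists of the single line ``See the proof of \cite[Lemma 6.3]{AGRS}'', and what you outline is exactly that argument --- projecting $Q_A \subseteq Q_{A_1}\times Q_{A_2}$ via the block decomposition of the bracket relation, factoring $\Fou_V = \Fou_{V_1}\otimes\Fou_{V_2}$, and applying niceness of each $A_i$ fiberwise (the ``with parameters'' extension follows by pairing $\eta$ against test functions on the other factor, since partial Fourier transform commutes with this pairing).

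One small correction: your final sentence about generating $\tC_A$ is not quite right. An element of $\tC_{A_1}\setminus C_{A_1}$ alone does \emph{not} lie in $\tC_A$ --- extending $(T_1,-1)$ by the identity on $V_2$ fails both the $\tG(V)$ condition (in the unitary case) and the requirement of fixing $A_2$ under the twisted action $A_2\mapsto -A_2$. What you actually obtain from the two fiberwise steps is invariance of $\eta$ under $v_1\mapsto -T_1 v_1$ on the first factor \emph{and} under $v_2\mapsto -T_2 v_2$ on the second; composing these gives invariance under the action of $(T_1\oplus T_2,\,-1)$, and it is this combined element that represents the nontrivial coset of $\tC_A/C_A$. (In the $SO$ case one may need to flip the sign of one $T_i$ to meet the determinant constraint, exactly as in Appendix~\ref{SO centralizer}.) Your preceding paragraph already sets this up correctly; only the closing sentence misstates it.
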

\begin{proof}
See the proof of \cite[Lemma 6.3]{AGRS}.
\end{proof}
\subsection{A 'simple' operator is nice}

Using the classification of Proposition \ref{full classification}, we need to check that simple non-split, simple even nilpotent, and simple odd nilpotent blocks are nice (recall that we assumed the characteristic polynomial of our original operator to be a power of an irreducible polynomial, thus we need not check simple split operators).
Let $A$ be a block of one of these types.
Let $s = (T, -1)$ be an element of $\tC_A$ with $\chi(s)=-1$. We have $A = s.A = -TAT^{-1}$, and so $TA = -AT$
We need to prove the following claim for each of the possible block types.

\begin{claim} \label{simple blocks are nice}
Let $\xi$ be a $C_A$-invariant distribution on $V$, such that both $\xi$ and $\Fou(\xi)$ are supported on $Q_A$. Then $\xi$ is also $s$-invariant.
\end{claim}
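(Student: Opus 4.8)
The plan is to treat the three block types (simple non-split, simple even nilpotent, simple odd nilpotent) in turn, in each case following the template of \cite[Section 6]{AGRS} and \cite[Subsection 4.3]{GL case}. For a fixed block $A$, the general scheme is: first, compute $Q_A$ explicitly; second, understand the centralizer $C_A$ and its action on $V$; third, identify the element $s = (T,-1) \in \tC_A \setminus C_A$; fourth, show that any $C_A$-invariant distribution $\xi$ whose Fourier transform is also $C_A$-invariant and for which both $\xi$ and $\Fou(\xi)$ are supported on $Q_A$ must automatically be $s$-invariant. The key structural input is that for a \emph{simple} (minimal regular) block the centralizer $C_A$ acts with very few orbits on $V$ — essentially $V$ is cyclic for the algebra $\KK[A]$, so $C_A$-orbits are controlled by the value of some polynomial invariant — while $Q_A$, being defined by the condition $\phi_v \in [A,\mathfrak{g}(V)]$, turns out to be a small (often proper linear, or at least thin) subvariety of $V$. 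The interplay between "supported on a thin set" for both $\xi$ and $\Fou(\xi)$ forces $\xi$ to be supported at a point fixed by $s$ (or on a subspace on which $s$ acts trivially up to the available symmetry), which gives $s$-invariance.

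More concretely, I would first handle the \textbf{simple non-split} case. Here $A$ is minimal regular with characteristic polynomial a power $f^s$ of an irreducible polynomial $f = \pm f^*$; by Theorem \ref{QA-RA} (applied away from the exceptional $\mathfrak{o}$, $x^n$ case) we have $Q_A \subseteq R_A$, and one checks that in fact $R_A$ is a union of at most one or two $C_A$-orbits together with $0$, or better that $Q_A$ is contained in a proper $C_A$-stable subvariety meeting each relevant orbit in a Fourier-small way. The argument of \cite{AGRS} shows that the only $C_A$-invariant distribution on $V$ supported, together with its Fourier transform, on $Q_A$ is (a multiple of) the delta at $0$ — or else the bilinear form pairing $V$ with itself via $\langle A^k v, v\rangle$ degenerates on the support — and $\delta_0$ is manifestly $s$-invariant since $T0 = 0$. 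The \textbf{nilpotent} cases ($\mathfrak{g} = \mathfrak{o}$ with minimal polynomial $x^d$, $d$ even; $\mathfrak{g} = \mathfrak{sp}$ with $d$ odd) are where I would spend the most care: here $A$ is nilpotent, so $\KK[A]$-cyclicity means $V$ (dimension $2d$) is spanned by $e,Ae,\dots,A^{d-1}e,f,Af,\dots,A^{d-1}f$ as in Definition \ref{block types}, and one computes $[A,\mathfrak{g}(V)]$ and the condition defining $Q_A$ directly in these coordinates. One finds $Q_A$ is a union of finitely many $C_A$-orbits, and the crux is an $\mathfrak{sl}_2$-type homogeneity/weight argument (exactly as in \cite{AGRS}, using the grading by the cocharacter of the $\mathfrak{sl}_2$-triple through $A$): a distribution supported on $Q_A$ with Fourier transform supported on $Q_A$ must be homogeneous of a forced degree, and that degree is incompatible unless the distribution is supported at the origin. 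Again $s$ fixes $0$.

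The \textbf{main obstacle} I anticipate is the explicit computation of $Q_A$ and of $[A,\mathfrak{g}(V)]$ in the nilpotent cases over a field of positive characteristic: the $\mathfrak{sl}_2$-triple exists and the representation theory of $\mathfrak{sl}_2$ is the organizing principle in characteristic $0$, but in characteristic $p$ one must instead argue with the explicit basis $e, Ae, \dots, f, Af, \dots$ and track the weight grading by hand, making sure the homogeneity/Fourier-transform degree bookkeeping still goes through (this is precisely the kind of place where the characteristic-$0$ proofs can fail, so it needs the care flagged in the introduction). A secondary obstacle is verifying, in the exceptional $\mathfrak{o}$-case of Theorem \ref{QA-RA} where only $\langle A^k v, v\rangle = 0$ for $k \geq 1$ is available (not the full $v \in R_A$), that this weaker information still suffices to pin down the support; here one should combine it with the condition that $\Fou(\xi)$ is also supported on $Q_A$ and with the $C_A$-invariance to close the argument, possibly after a further reduction. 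Once each block type is handled, Lemma \ref{DirectSum} and Proposition \ref{full classification} assemble these into nicety for the original operator, completing the proof of Claim \ref{orbit}.
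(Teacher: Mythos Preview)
Your proposal has a genuine structural gap: you are reaching for the wrong mechanism. The paper's argument is \emph{not} an orbit-counting or homogeneity-degree argument at all, and in particular never invokes an $\mathfrak{sl}_2$-triple. The organizing principle is much more elementary: in every case one shows that $Q_A$ is contained in a \emph{linear subspace} $W \subset V$, and then uses the basic Fourier duality that ``$\Fou(\xi)$ supported on $W$'' is equivalent to ``$\xi$ invariant under translations by $W^\perp$''. Thus $\xi$ is supported on $W$ and translation-invariant by $W^\perp$. If $W \subsetneq W^\perp$, this forces $\xi = 0$; if $W = W^\perp$, then $\xi$ is a multiple of Lebesgue measure on $W$ (not $\delta_0$!), and one checks directly that the chosen $s$ preserves $W$ and hence that Lebesgue measure.

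Concretely: for a simple non-split block with minimal polynomial $f^d$, one has $Q_A \subseteq R_A = \ker f(A)^{\lfloor d/2\rfloor}$, whose orthogonal complement is $\ker f(A)^{\lceil d/2\rceil}$; the odd/even dichotomy on $d$ gives exactly the two alternatives above. For the even-nilpotent ($\mathfrak{o}$) and odd-nilpotent ($\mathfrak{sp}$) blocks, the paper computes $Q_A$ by a direct trace trick: one introduces the involution $P$ swapping the $e$- and $f$-chains, observes that $(PB)|_E$ makes sense and that $(P\phi_{v,v})|_E$ (or its $A$-twisted version) is a commutator with $A|_E$, and deduces $\langle A^k P v_F, v_F\rangle = 0$ for the relevant $k$. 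This pins $Q_A$ inside $E_2 \oplus F_2$ (even case) or $E_3 \oplus F_3$ (odd case), and the same support-versus-shift-invariance dichotomy finishes. Your proposed $\mathfrak{sl}_2$-homogeneity route is both unnecessary and, as you yourself flag, fragile in positive characteristic; the paper sidesteps it entirely.
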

We shall prove this claim in the following subsections.
This claim implies Claim \ref{orbit}.

\subsubsection{Simple non-split blocks}
Assume that $A$ is a simple non-split block with minimal polynomial $f^d$, $f$ irreducible, and $f^*=\pm f$. If we are in the case $\mathfrak{g} = \mathfrak{o}$, assume also that $f(x)\neq x$.
We know by Proposition \ref{QA-RA} that $Q_A\ss R_A$.
Consider the self-dual increasing filtration $V^i=\ker f(A)^i$. One can easily see that $R_A=V^{\floor{d/2}}$. The fact that $\Fou(\xi)$ is supproted on $V^{\floor{d/2}}$ means that $\xi$ is invariant to shifts by $(V^{\floor{d/2}})^{\perp}=V^{\ceil{d/2}}$. Now consider two cases:
\begin{enumerate}
    \item $d$ is odd. Then $V^{\floor{d/2}}\subsetneq V^{\ceil{d/2}}$. Choosing a vector $v\in V^{\ceil{d/2}}\setminus V^{\floor{d/2}}$, we get that $\xi$ is the same as $\xi$ shifted by $v$, and that it is supported on $V^{\floor{d/2}}\cap (v+V^{\floor{d/2}})=\emptyset$, thus $\xi=0$.
    \item $d$ is even. Then $V^{\floor{d/2}}= V^{\ceil{d/2}}=V^{d/2}$. Thus $\xi$ is the extension by 0 of $\xi |_{V^{d/2}}$, which is a shift invariant distribution on $V^{d/2}$. Thus it is a multiple of the Lebesgue measure on $V^{d/2}$. So it is left to check that if the Lebesgue measure $\zeta$ on $V^{d/2}$ is $C_A$ invariant it is also $s$-invariant. For this, first check that $V^{d/2}$ is $T$-invariant (and so $s$ invariant):
    \begin{equation*}
    \begin{split}
        TV^{d/2} &=Tf(A)^{d/2}V= \bar{f}(TAT^{-1})^{d/2}TV = \bar{f}(-A)^{d/2}V = f(A)^{d/2}V=V^{d/2}.
    \end{split}
    \end{equation*}
    
    So $s$ multiplies $\zeta$ by a constant $c$, which is positive, because $s$ preserves the positivity of the Lebesgue measure. Since $s^2\in C_A$, we have by assumption $s^2\zeta=\zeta$. So unless $\zeta=0$, $c^2=1$, hence by positivity $c=1$, and we're done.
\end{enumerate}

\subsubsection{Simple non-split nilpotent blocks in the orthogonal case}
Note that a simple non-split nilpotent block is such that $V$ has a basis of the form $e, Ae, \dots, A^{d-1}e$, the minimal polynomial of $A$ is equal to $x^d$, and for some non-zero constant $c\in \FF$,
$$<A^i e, A^j e> =
\begin{cases}
(-1)^j c & i + j = d-1\\
0 & \mathrm{otherwise}
\end{cases}.
$$
Note that this implies that $d$ must be odd.
Let $A$ be such a block. Denote
\begin{align*}
    V_1:= & \Span(e, \dots, A^{(d-3)/{2}}e),\\
    V_2:= & \Span(A^{(d-1)/{2}}e),\\
    V_3:= & A^{(d+1)/{2}}V = \Span(A^{(d+1)/{2}}e, \dots, A^{d-1}e).
\end{align*}
Then by Theorem \ref{QA-RA}, $Q_A\ss V_2\oplus V_3$. So $\xi$ is supported on $V_2\oplus V_3$, and is invariant to shifts by $(V_2\oplus V_3)^\perp = V_3$. So it is of the form $\delta_1 \otimes  R \otimes dv_3$, where $dv_3$ is the Lebesgue measure on $V_3$, $\delta_1$ is the Dirac measure at $0$ on $V_1$, and $R$ is some distribution on $V_2$.
Since it is enough to prove our claim for any valid choice of $s = (T, -1)\in \tC_A$, we may simply take $T(A^i e) = (-1)^{(d+1) / 2 + i} A^i e$. Then $s$ acts on $V$ by $A^i e\mapsto  (-1)^{(d-1) / 2 + i} A^i e$. The spaces $V_1, V_2, V_3$ are $s$ invariant, and $s$ acts on $V_2$ by identity. It is also clear that $dv_3, \delta_1$ are $s$-invariant. Thus $\xi$ is $s$ invariant, and we are done.

\subsubsection{Simple even nilpotent blocks}
Let $A\in \mathfrak{o}$ be a simple even nilpotent block.
Denote
$$E:=\Span(e, Ae, \dots, A^{d-1}e)$$
and
$$F:=\Span(f, Af, \dots, A^{d-1}f).$$
Denote also
$$E_1:=\Span(e, Ae, \dots, A^{d/2 - 1}e), E_2:=\Span(A^{d/2}e, A^{d/2 + 1}e, \dots, A^{d-1}e)$$
and
$$F_1:=\Span(f, Af, \dots, A^{d/2 - 1}f), F_2:=\Span(A^{d/2}f, A^{d/2 + 1}f, \dots, A^{d-1}f).$$
Let $P:V\to V$ be defined by $P A^i e = A^i f, P A^i f = A^i e$. So $PA = AP$.
For any two vectors $u,w\in V$, define the linear operator $\phi_{u,w}v := <v, w>u$.
For any $X\in \operatorname{End}(V)$, denote by $X|_E$ the linear operator from $E$ to itself which sends $v\in E$ to $u_E$, where $Xv = u_E + u_F$ and we have $u_E\in E, u_F\in F$.
Let $v\in Q_A$. By definition we have $A\phi_{v, v} + \phi_{v, v}A = [A, B]$ for some $B\in \mathfrak{g}$.
We have
\begin{align*}
(P(A\phi_{v, v} + \phi_{v, v}A))|_E =& (P[A, B])|_E = (PAB - PBA)|_E = (APB - PBA)|_E =\\
=& [A|_E, (PB)|_E],
\end{align*}
The last equation following from the fact that $A$ preserves $E$ and $F$.
From this it follows that for any $k\geq 0$,
\begin{align*}
\tr((P(A\phi_{v, v} + \phi_{v, v}A)A^k)|_E) = &\tr([A|_E, (PB)|_E] (A|_E)^k) = \\
 =& \tr([A|_E,  (PB)|_E(A|_E)^k]) = 0.
\end{align*}

However 
\begin{align*}
\tr((PA\phi_{v, v}A^k)|_E) = &\tr((AP\phi_{v, v}A^k)|_E)= \tr(A|_E(P\phi_{v,v}A^k)|_E) =\\=& \tr((P\phi_{v,v}A^k)|_E A|_E) = \tr((P\phi_{v, v}A^{k+1})|_E).
\end{align*}
So this expression is equal to half of the left hand side of the previous equation, and so
$$\tr((P\phi_{v, v}A^{k+1})|_E)=0.$$
But
$$\tr((P\phi_{v, v}A^{k+1})|_E) = \tr(\phi_{Pv, (-A)^{k+1}v}|_E) =\tr(\phi_{Pv_F, (-A)^{k+1}v_F}) <Pv_F, (-A)^{k+1}v_F>,$$
where $v = v_E + v_F$, $v_E\in E, v_F\in F$.
Thus for any $k\geq 1$ we have $<A^k Pv_F, v_F> = 0$. Similarly, $<A^k Pv_E, v_E> = 0$.
This implies $v\in E_2\oplus F_2$.
Thus $\xi$ is a distribution as in the statement of Claim \ref{simple blocks are nice}, it must be supported on $E_2\oplus F_2$ and invariant to translations by $(E_2\oplus F_2)^\perp = E_2\oplus F_2$. Thus it is equal to a multiple of the Lebesgue measure on $E_2\oplus F_2$.
As it is enough to prove the claim for any specific choice of $s=(T, -1)\in \tC_A$, we can choose $T$ to be $$A^i e\mapsto (-1)^i A^i e, A^i f \mapsto (-1)^{i+1} A^i f.$$
For this choice, $s$ acts by $A^i e\mapsto (-1)^{i+1} A^i e, A^i f \mapsto (-1)^i A^i f$, and so fixes $\xi$ as desired.

\subsubsection{Simple odd nilpotent blocks}
Let $A\in \mathfrak{sp}$ be a simple odd nilpotent block.
Denote
$$E:=\Span(e, Ae, \dots, A^{d-1}e)$$
and
$$F:=\Span(f, Af, \dots, A^{d-1}f).$$
Denote also
\begin{align*}
E_1:=&\Span(e, Ae, \dots, A^{\frac{d-3}{2}}e),\\
E_2:=&\Span(A^{\frac{d-1}{2}}e),\\
E_3:=&\Span(A^{\frac{d+1}{2}}e, \dots, A^{d-1}e),
\end{align*}
and
\begin{align*}
F_1:=&\Span(f, Af, \dots, A^{\frac{d-3}{2}}f),\\
F_2:=&\Span(A^{\frac{d-1}{2}}f),\\
F_3:=&\Span(A^{\frac{d+1}{2}}f, \dots, A^{d-1}f).
\end{align*}
The following is done similarly to the previous case, of simple even nilpotent blocks.
Let $P:V\to V$ be defined by $P A^i e = A^i f, P A^i f = A^i e$. So $PA = AP$.
For any two vectors $u,w\in V$, define the linear operator $\phi_{u,w}v := <v, w>u$.
For any $X\in \operatorname{End}(V)$, denote by $X|_E$ the linear operator from $E$ to itself which sends $v\in E$ to $u_E$, where $Xv = u_E + u_F$ and we have $u_E\in E, u_F\in F$.
Let $v\in Q_A$. By definition we have $\phi_{v, v}= [A, B]$ for some $B\in \mathfrak{g}$.
We have
\begin{align*}
(P\phi_{v, v})|_E =& (P[A, B])|_E = (PAB - PBA)|_E = (APB - PBA)|_E =\\
=& [A|_E, (PB)|_E],
\end{align*}
From this it follows that for any $k\geq 0$
\begin{align*}
\tr((P\phi_{v, v}A^k)|_E) = &\tr([A|_E, (PB)|_E] (A|_E)^k) = \\
 =& \tr([A|_E,  (PB)|_E(A|_E)^k]) = 0.
\end{align*}

But
$$\tr((P\phi_{v, v}A^k)|_E) = \tr(\phi_{Pv, (-A)^k v}|_E) = <Pv_F, (-A)^k v_F>,$$
where $v = v_E + v_F$, $v_E\in E, v_F\in F$.
Thus for any $k\geq 0$ we have $<A^k Pv_F, v_F> = 0$. Similarly, $<A^k Pv_E, v_E> = 0$.
This implies $v\in E_3\oplus F_3$.
Thus if $\xi$ is an equivariant distribution as in the statement of Claim \ref{simple blocks are nice}, it must be supported on $E_3\oplus F_3$ and invariant to translations by $(E_3\oplus F_3)^\perp = E_2\oplus E_3\oplus F_2\oplus F_3$. This clearly implies that $\xi = 0$.
\appendix
\section{Conjugacy classes and 'simple' elements in the orthogonal and unitary groups} \label{classifications}
Our goal in this appendix is to prove Proposition \ref{full classification}.
We will focus on the classification of $G$-conjugacy classes inside $\mathfrak{g}$, though most of the work, if not all of it, applies also for conjugacy classes in $G(V)$.
The classification will be in three parts. The first is using Lemma \ref{eigenvalues perp} separate the different non-related eigenvalues, the second is to separate the different sizes of rational (or Jordan) blocks, and the third is to separate blocks of the same size one from the other.
\subsection{Separating non-related eigenvalues}\label{separating non-related igenvalues}
Recall Definition \ref{f^*}.


Lemma \ref{eigenvalues perp} has the following corollary, using also Theorem \ref{Rational}:
\begin{corollary}\label{first classification}
Any $A\in\mathfrak{g}$ splits to a direct orthogonal sum of block of two types:
\begin{enumerate}[(A)]
    \item simple split blocks.
    \item Blocks of which the characteristic polynomial is $f^d$, and $f$ is irreducible and satisfies $f^*=\pm f$.
\end{enumerate}
\end{corollary}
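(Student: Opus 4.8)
\emph{Proof proposal.} The plan is to combine the primary (generalized eigenspace) decomposition of $V$ relative to $A$ with the rational canonical form, organized by the involution $f\mapsto f^*$ of Definition \ref{f^*}. Let $p$ be the characteristic polynomial of $A$ and write $p=\prod_{f}f^{d_f}$ over the distinct monic irreducible factors; for $N\gg 0$ set $V_f:=\ker f(A)^N$, so that $V=\bigoplus_f V_f$, each $V_f$ is $A$-invariant, and $A|_{V_f}$ has characteristic polynomial $f^{d_f}$. Since $g\mapsto g^*$ is a ring automorphism of $\KK[x]$ it sends irreducibles to irreducibles; write $\widetilde f$ for the monic normalization of $f^*$, so that $f^*=\pm f$ precisely when $\widetilde f=f$. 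Recalling that $p=\pm p^*$ in the Lie algebra case, the map $f\mapsto\widetilde f$ permutes the irreducible factors of $p$ (preserving multiplicities), so $V_{\widetilde f}\ne 0$ whenever $V_f\ne 0$.

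First I would extract orthogonality from Lemma \ref{eigenvalues perp}. If $f^*=\pm f$ then $f^*\ne\pm f'$ for every other monic irreducible factor $f'$, so $V_f\perp V_{f'}$; hence $V_f$ is orthogonal to its complement, $B|_{V_f}$ is non-degenerate, and $(V_f,A|_{V_f},B|_{V_f})$ is a block of type (B). If $f^*\ne\pm f$ then $\widetilde f\ne f$; Lemma \ref{eigenvalues perp} applied with $i=j$ gives that $V_f$ and $V_{\widetilde f}$ are isotropic, and the same lemma gives $V_f\oplus V_{\widetilde f}\perp V_{f'}$ for every factor $f'\notin\{f,\widetilde f\}$. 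Thus $U_f:=V_f\oplus V_{\widetilde f}$ carries a non-degenerate restriction of $B$ whose two halves are isotropic, so $B$ restricts to a perfect pairing $V_f\times V_{\widetilde f}\to\KK$. At this stage $V$ is the orthogonal sum of the type-(B) blocks $V_f$ (for $f^*=\pm f$) and of the spaces $U_f$ (one per unordered pair $\{f,\widetilde f\}$ with $f^*\ne\pm f$), and it remains to split each $U_f$ into simple split blocks.

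For a fixed such $U_f$, apply the Rational Canonical Form (Theorem \ref{Rational}) to $A|_{V_f}$: write $V_f=\bigoplus_k W_k$ with $A_k':=A|_{W_k}$ minimal regular. Since $A|_{V_f}$ has characteristic polynomial $f^{d_f}$, the irreducible factor of the minimal polynomial of each $A_k'$ is $f$, and $f\ne f^*$. Using the perfect pairing, let $Z_k\subseteq V_{\widetilde f}$ be the annihilator of $\bigoplus_{j\ne k}W_j$; then $V_{\widetilde f}=\bigoplus_k Z_k$ with $\dim Z_k=\dim W_k$, and $W_k\oplus Z_k\perp W_j\oplus Z_j$ for $j\ne k$. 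The subspace $Z_k$ is $A$-invariant: for $w\in W_j$ with $j\ne k$ and $z\in Z_k$, the identity $A^*=-A$ gives $<w,Az>=-<Aw,z>=0$ because $Aw\in W_j$, so $Az$ annihilates every $W_j$ with $j\ne k$. Finally, with $A'':=A|_{Z_k}$, the relation $A^*=-A$ yields $<A'u,v>=<u,-A''v>$ for $u\in W_k,\ v\in Z_k$. Hence each $(W_k\oplus Z_k,\,A|_{W_k\oplus Z_k},\,B|_{W_k\oplus Z_k})$ satisfies all the defining conditions of a simple split block (Definition \ref{block types}), and assembling these over all $k$ and all pairs $\{f,\widetilde f\}$ together with the type-(B) blocks completes the decomposition.

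I expect the only genuinely delicate step to be the construction of the dual decomposition $V_{\widetilde f}=\bigoplus_k Z_k$ compatible with the rational canonical form of $V_f$, together with the verification that each $Z_k$ is $A$-invariant and that $B$ restricts to the natural pairing between $W_k$ and $Z_k$; the rest is the primary decomposition plus bookkeeping with $f\mapsto f^*$. A minor caveat is the monic-versus-sign normalization implicit in the condition $f^*=\pm f$ and the semilinearity of $*$ and of $B$ in the Hermitian case, but this does not affect the shape of the argument.
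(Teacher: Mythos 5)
Your proposal is correct and follows essentially the same route as the paper, which obtains the corollary directly from Lemma \ref{eigenvalues perp} (separating the generalized eigenspaces, with $V_f$ self-paired when $f^*=\pm f$ and paired isotropically with $V_{\widetilde f}$ otherwise) together with the rational canonical form (Theorem \ref{Rational}). The paper leaves the dual-decomposition bookkeeping implicit, and your construction of the $Z_k$ as annihilators simply spells out that step correctly.
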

Now we continue with the classification of blocks of Type (B).
\subsection{Classifying blocks of Type (B)}
Let $A\in\mathfrak{g}(V)$ be a block of Type (B), with minimal polynomial $f^d$, $f$ irreducible and satisfying $f=\pm f^*$.
\begin{definition}
An operator $X\in\mathfrak{g}(W)$ (for some $W$) which is of Type (X) will be called homogeneous if for any $0\leq j\leq d$ we have $f(X)^j V=\ker f(X)^{d-j}$. That is equivalent to saying that the rational cannonical form of $X$ consists of blocks all of the same size.
\end{definition}
We dedicate this subsection to proving the following Proposition:
\begin{proposition}\label{second classification}
$V$ can be decomposed as an orthogonal sum $V=\bigoplus_{i=1}^d U_i$, and accordingly $A=\bigoplus_{i=1}^d A_i$, such that each $A_i$ is homogeneous.
\end{proposition}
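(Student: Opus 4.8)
The plan is to induct on $\dim V$. The key reduction is that it suffices to produce a single $A$-invariant subspace $U\subseteq V$ on which $B$ is non-degenerate and $A|_U$ is homogeneous with all rational blocks of the maximal size $d$ (where $f^d$ is the minimal polynomial of $A$). Indeed, since $A$ is skew-adjoint, $\langle Ax,y\rangle=-\langle x,Ay\rangle$, the orthogonal complement $U^\perp$ is $A$-invariant, $B|_{U^\perp}$ is non-degenerate, and $A|_{U^\perp}\in\mathfrak{g}(U^\perp)$ is again a block of Type (B); moreover $\dim U^\perp<\dim V$ (as $U\neq 0$), and, once we know $U\cong(\KK[x]/(f^d))^m$ is a direct summand of the $\KK[x]/(f^d)$-module $V$, the operator $A|_{U^\perp}$ has minimal polynomial $f^{d'}$ with $d'<d$. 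Applying the induction hypothesis to $U^\perp$ and setting $U_d:=U$ and $U_i:=$ the $i$-th homogeneous piece of $U^\perp$ for $i<d$ (some of these possibly zero) then yields $V=\bigoplus_{i=1}^d U_i$. The base case ($\dim V=0$, or $d=1$, where $f(A)=0$ and $A$ is already homogeneous) is trivial.

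To build $U$, introduce the filtration $V^j:=\ker f(A)^j$ and write $\varepsilon=\pm1$ for the sign with $f^*=\varepsilon f$, so that $f(A)^*=f^*(A)=\varepsilon f(A)$ and hence $\langle f(A)^j u,v\rangle=\varepsilon^j\langle u,f(A)^j v\rangle$; combined with non-degeneracy of $B$ and a dimension count this gives $(V^j)^\perp=f(A)^j V$ for all $j$. Since $f(A)^d=0$, we get $f(A)V\subseteq V^{d-1}$, so $f(A)$ descends to $0$ on $V/V^{d-1}$, making $V/V^{d-1}$ a module — in fact a free module of positive rank $m$ — over the field $L:=\KK[x]/(f)$, and $f(A)^{d-1}$ induces an $L$-linear isomorphism $V/V^{d-1}\xrightarrow{\ \sim\ }Z:=f(A)^{d-1}V$. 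Define $\beta$ on $V/V^{d-1}$ by $\beta(\bar u,\bar v):=\langle u,f(A)^{d-1}v\rangle$: the identity above shows this is well-defined, and $\beta$ is non-degenerate since $\beta(\bar u,\cdot)=0$ forces $u\in(f(A)^{d-1}V)^\perp=V^{d-1}$. One can further identify $\beta$ as a sesquilinear form over $L$ for the involution induced by $x\mapsto-x$, of symmetric, Hermitian or alternating type according to $B$ and $\varepsilon^{d-1}$, but only its non-degeneracy will be used.

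Now pick any vectors $w_1,\dots,w_m\in V$ whose images form an $L$-basis of $V/V^{d-1}$, and set $U:=\sum_{j=1}^m\KK[x]w_j$. Each $\operatorname{Ann}(w_j)$ is an ideal of $\KK[x]$ containing $f^d$ (as $f(A)^d=0$) but not $f^{d-1}$ (as $\overline{w_j}\neq0$), hence equals $(f^d)$, so $\KK[x]w_j\cong\KK[x]/(f^d)$; and since the $f(A)^{d-1}w_j$ form an $L$-basis of $Z$, a short argument applying powers of $f(A)$ shows the sum is direct, whence $A|_U$ is homogeneous with every block of size $d$ and $U\cong(\KK[x]/(f^d))^m$. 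It remains to see $B|_U$ is non-degenerate. The radical $U\cap U^\perp$ is $A$-invariant, so if nonzero it meets the socle $U\cap\ker f(A)=\operatorname{span}_L\{f(A)^{d-1}w_j\}$; hence it suffices to show no nonzero $z$ in this socle lies in $U^\perp$. Writing $z=f(A)^{d-1}\sum_j p_j(A)w_j\neq0$ and computing, by commuting $p_j(A)$ past $f(A)^{d-1}$ and moving it across the form via $\langle u,q(A)v\rangle=\langle q^*(A)u,v\rangle$,
\[
\langle w_i,z\rangle=\sum_j\beta\big(\overline{p_j^*(A)w_i},\,\overline{w_j}\big),
\]
which, since $\overline{p_j^*(A)w_i}=(p_j^*\bmod f)\,\overline{w_i}$ and $(\overline{w_j})_j$ is the chosen $L$-basis, is precisely the Gram matrix of $\beta$ applied to the (conjugated) coordinate vector of $z$. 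As $\beta$ is non-degenerate and $*$ is an involution, $\langle w_i,z\rangle$ vanishes for all $i$ only when $z=0$, so $B|_U$ is non-degenerate and the induction closes.

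The step demanding care is this last one: keeping track of the adjoint manipulations together with the twist $x\mapsto-x$ built into $\beta$, so as to recognize $(\langle w_i,z\rangle)_i$ as the image of $z$ under the invertible Gram matrix of $\beta$. (Alternatively one can argue that $B|_U$ is non-degenerate for a Zariski-generic choice of the lifts $w_j$, but the direct computation above is cleaner and needs no genericity.) Everything else — the filtration identities, the identification of $V/V^{d-1}$ as a free $L$-module, the directness of the sum defining $U$, and the direct-summand claim used in the reduction — is routine bookkeeping given Lemma \ref{eigenvalues perp}, Theorem \ref{Rational}, Corollary \ref{first classification}, and Definition \ref{f^*}.
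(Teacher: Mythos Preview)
Your proof is correct in strategy and takes a route dual to the paper's: you peel off the \emph{largest} homogeneous piece first (all blocks of size $d$, via lifts of an $L$-basis of $V/V^{d-1}$), whereas the paper peels off the \emph{smallest} one (all blocks of the minimal size $m$, via lifts of an $L$-basis of $V_m/f(A)V_{m+1}$). For non-degeneracy of $B|_U$, the paper exhibits a compatible family of perfect pairings between the successive layers $f(A)^iV_m/f(A)^{i+1}V_{m+1}$ and $f(A)^{m-i-1}V_m/f(A)^{m-i}V_{m+1}$, while you use a single pairing $\beta$ on the top quotient together with a socle argument. Your route is arguably tidier once fixed; the paper's makes the layer-by-layer structure more explicit.

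There is, however, a genuine slip in your final step when $\deg f>1$. The form $\beta$ you defined is $\KK$-valued, while the coordinate vector $(p_j\bmod f)_j$ of $z$ lives in $L^m$, so the ``Gram matrix of $\beta$'' interpretation does not type-check; concretely, the $m$ conditions $\langle w_i,z\rangle=\beta(\overline{w_i},\bar v)=0$ are only $m$ linear conditions on the $(m\cdot\deg f)$-dimensional $\KK$-space $V/V^{d-1}$, and do not force $\bar v=0$. The fix is immediate and in the spirit of what you wrote: test against all of $U$, not just the $w_i$. For any $u\in U$ one has $\langle u,z\rangle=\langle u,f(A)^{d-1}v\rangle=\beta(\bar u,\bar v)$, and the map $U\to V/V^{d-1}$ is surjective (since $\overline{q(A)w_j}=(q\bmod f)\,\overline{w_j}$ ranges over all $L$-multiples of each basis vector). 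Hence $z\in U^\perp$ forces $\beta(\cdot,\bar v)\equiv 0$, so $\bar v=0$ by non-degeneracy of $\beta$, and therefore $z=0$. With this correction your argument goes through.
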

Consider the decreasing filtration $f(A)^i V$ of $V$, and the increasing filtration $V_i=\ker f(A)^i$. We have $f(A)^i V\subseteq V_{d-i}$, and $V_i^{\perp}=f(A)^i V$.
Let $m$ be the minimal integer so that $V_m \subsetneq f(A)V$. For any $0\leq i\leq m-1$, we have $f(A)^i V_m=V_{m-i}$.

\begin{lemma}
For any $0\leq i\leq d$, $V_i^\perp = f(A)^i V$.
\end{lemma}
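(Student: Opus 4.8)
The plan is to prove the inclusion $f(A)^i V \subseteq V_i^\perp$ directly, and then promote it to an equality by a dimension count.

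For the inclusion, the key point is that $f(A)$ is $\pm$-self-adjoint. By the remark following Definition \ref{f^*}, we have $f(A)^* = f^*(A)$, and since $A$ is a block of Type (B) the polynomial $f$ satisfies $f^* = \pm f$; hence $f(A)^* = \pm f(A)$ and therefore $(f(A)^i)^* = (f(A)^*)^i = \pm f(A)^i$. Now take any $u \in V$ and any $w \in V_i = \ker f(A)^i$. Then
\[
<f(A)^i u, w> = <u, (f(A)^i)^* w> = \pm <u, f(A)^i w> = 0,
\]
where in the middle equality I use that the scalar $\pm 1$ lies in $\FF$ and is fixed by the involution of $\KK$, so pulling it out of the second argument does not change it. This shows $f(A)^i V \subseteq V_i^\perp$.

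To conclude, I would apply the rank–nullity theorem to $f(A)^i$, giving $\dim f(A)^i V = \dim V - \dim \ker f(A)^i = \dim V - \dim V_i$; on the other hand, since $B$ is non-degenerate, $\dim V_i^\perp = \dim V - \dim V_i$ as well. So the inclusion just established is an inclusion between two subspaces of equal dimension, hence an equality.

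I do not anticipate a genuine obstacle here: the statement is essentially the observation that $\ker f(A)^i$ and $\operatorname{im} f(A)^i$ are orthogonal complements because $f(A)$ is (up to sign) self-adjoint. The only places asking for a little care are the sign bookkeeping when passing the adjoint across $<\cdot,\cdot>$ (including the harmless fact that $\overline{\pm 1} = \pm 1$ in the Hermitian case), and invoking non-degeneracy of $B$ for the identity $\dim V_i^\perp = \dim V - \dim V_i$.
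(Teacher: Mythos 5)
Your proposal is correct and matches the paper's argument: the paper states the inclusion $f(A)^i V \subseteq V_i^\perp$ as obvious (precisely because $f(A)^* = f^*(A) = \pm f(A)$, which you spell out) and then concludes by the same rank--nullity plus non-degeneracy dimension count. No gaps.
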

\begin{proof}
Obviously $f(A)^i V \ss V_i^\perp$. However,
$$\dim f(A)^i V = \dim V - \dim\Ker(f(A)^i) = \dim V - \dim V_i = \dim V_i^\perp.$$
Thus we have $V_i^\perp = f(A)^i V$.
\end{proof}
\begin{lemma}
The form $B$ on $V$ induces a non-degenerate pairing of $f(A)^i V_m / f(A)^{i+1} V_{m+1}$ with $f(A)^{m-i-1}V_m / f(A)^{m-i} V_{m+1}$.
\end{lemma}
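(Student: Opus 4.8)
The plan is to deduce the claim from two facts already at hand: that $f(A)$ is self-adjoint up to sign, i.e.\ $f(A)^* = f^*(A) = \pm f(A)$ since $f = \pm f^*$, so that $<f(A)^k u, w> = (\pm 1)^k <u, f(A)^k w>$ for all $u, w \in V$ and all $k \geq 0$; and the duality $V_j^\perp = f(A)^j V$ from the previous lemma, together with the identities $f(A)^j V_m = V_{m-j}$ for $0 \leq j \leq m-1$ (and $f(A)^m V_m = 0$, since $V_m = \ker f(A)^m$).

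First I would check that $B$ descends to a pairing of the two subquotients. Since $f(A) V_{m+1} \subseteq V_m$, we have $f(A)^{i+1} V_{m+1} = f(A)^i(f(A) V_{m+1}) \subseteq f(A)^i V_m$, and likewise $f(A)^{m-i} V_{m+1} \subseteq f(A)^{m-i-1} V_m$, so both quotients make sense. For well-definedness it remains to see that $f(A)^i V_m \perp f(A)^{m-i} V_{m+1}$ and $f(A)^{i+1} V_{m+1} \perp f(A)^{m-i-1} V_m$. The second is immediate: sliding all powers of $f(A)$ onto one argument gives $<f(A)^{i+1} u, f(A)^{m-i-1} w> = \pm <u, f(A)^m w>$, which vanishes for $w \in V_m = \ker f(A)^m$. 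For the first, $f(A)^i V_m = V_{m-i}$ (or $= 0 = V_0$ when $i = m$) while $f(A)^{m-i} V_{m+1} \subseteq f(A)^{m-i} V = V_{m-i}^\perp$, so the two are orthogonal.

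It then remains to prove non-degeneracy, and by the symmetry $i \leftrightarrow m - i - 1$ (which interchanges the two subquotients) it suffices to show the left radical is trivial. So suppose $x = f(A)^i y$ with $y \in V_m$ pairs to zero against every element of $f(A)^{m-i-1} V_m$. Then for all $z \in V_m$ we get $0 = <f(A)^i y, f(A)^{m-i-1} z> = \pm <f(A)^{m-1} y, z>$, so $f(A)^{m-1} y \in V_m^\perp = f(A)^m V$; write $f(A)^{m-1} y = f(A)^m w$ with $w \in V$. Then $y - f(A) w \in \ker f(A)^{m-1} = V_{m-1} \subseteq V_m$, and since $y \in V_m$ this forces $f(A) w \in V_m$, i.e.\ $w \in V_{m+1}$. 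Hence
\[
x = f(A)^i(y - f(A) w) + f(A)^{i+1} w \in f(A)^i V_{m-1} + f(A)^{i+1} V_{m+1},
\]
and since $V_{m-1} = f(A) V_m \subseteq f(A) V_{m+1}$ (with $V_0 = 0$ when $m = 1$) we get $f(A)^i V_{m-1} \subseteq f(A)^{i+1} V_{m+1}$, so $x \in f(A)^{i+1} V_{m+1}$, i.e.\ $x$ vanishes in the quotient. This kills the left radical, and the pairing is non-degenerate.

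The substantive part is this non-degeneracy argument; within it, and in the well-definedness check, the one delicate point is the bookkeeping between the two filtrations $V_\bullet$ and $f(A)^\bullet V$ — in particular keeping track of exactly which of the identities $f(A)^j V_m = V_{m-j}$ and $V_j^\perp = f(A)^j V$ are available in the ranges used, which is precisely why $m$ was singled out the way it was. Once those are granted, every remaining step is a one-line manipulation using $f(A)^* = \pm f(A)$.
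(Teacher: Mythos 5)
Your proof is correct and rests on the same two ingredients as the paper's: the duality $V_j^\perp=f(A)^jV$ and the identities $f(A)^jV_m=V_{m-j}$, showing that an element of the radical lies in $f(A)^{i+1}V_{m+1}$ and invoking the symmetry $i\leftrightarrow m-i-1$. The paper gets the containment in one line, via $v\in V_{i+1}^{\perp}\cap V_{m-i}=f(A)^{i+1}V\cap V_{m-i}=f(A)^{i+1}V_{m+1}$, whereas you reach it by sliding all powers of $f(A)$ to one side and an element chase; you also verify the well-definedness of the induced pairing, which the paper leaves implicit.
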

\begin{proof}
Let $v\in f(A)^i V_m=V_{m-i}$ and assume that $v\perp f(A)^{m-i-1}V_m=V_{i+1}$. Then $v\in V_{i+1}^{\perp}\cap V_{m-i}=f(A)^{i+1}V\cap V_{m-i}=f(A)^{i+1}V_{m+1}$. The other direction is symmetric.
\end{proof}
\begin{proof}[Proof of Proposition \ref{second classification}]
One can naturally give $V_m / f(A)V_{m+1}$ the structure of a vector space over the field $L:=\KK[A] / f(A)$.
Choose $e_1,\dots, e_k \in V_m$ which are the lifts of a basis of $V_m / f(A)V_{m+1}$ over $L$, and let $U_m:=\Span(A^i e_j)\ss V_m$.
We claim that for any relation of the form $h_1(A)e_1 + \dots + h_k(A)e_k + f(A)^r v = 0$, with $v\in V$, all the polynomials $h_i$ are divisible by $f(A)^{\min(m,r)}$.
To see this, assume otherwise, and rewrite this relation as $f(A)^\ell(f(A)^{r-\ell}v + \sum_{i=1}^k \widetilde h_i(A)e_i) = 0$, where at least one of the polynomials $\widetilde{h}_i(A)$ is not divisible by $f(A)$. Since $\ell < m$, we have $\sum_{i=1}^k \widetilde h_i(A)e_i\in V_\ell + f(A)V = f(A)V$, thus $\sum_{i=1}^k \widetilde h_i(A)e_i$ is a nontrivial relation in $V_m / f(A)V_{m+1}$ over $L$, which is a contradiction.
From this claim it follows that the map $f(A)^i U_m/f(A)^{i+1} U_m \to f(A)^i V_m / f(A)^{i+1} V_{m+1}$ is an isomorphism.
Thus the form $B$ on $V$ induces a non-degenerate pairing of $f(A)^i U_m / f(A)^{i+1} U_{m+1}$ with $f(A)^{m-i-1} U_m / f(A)^{m-i} U_{m+1}$.
In particular, its restriction to $U_m$ is non-degenerate. So $U_m$ splits as an $A$-invariant orthogonal direct summand of $V$.
The restriction of $A$ to $U_m$ is a homogeneous block because
$$f(A)^j U_m=f(A)^j V_m \cap U_m=V_{m-j}\cap U_m=\ker f(A|_{U_m})^{m-j}.$$
Because of the choice of $m$, $\dim U_m>0$.
Also, on $V':=U_m^{\perp}$, the minimal $m'$ s.t. $V'_{m'}\subsetneq f(A)V'$ is bigger than $m$. So by induction we are done.
\end{proof}

\subsection{Decomposing homogeneous blocks to simple non-split blocks}
\begin{lemma}\label{non-isotropic}
Given a non-zero symmetric, Hermitian, or skew-Hermitian form on a vector space $V$, there is a non-isotropic vector.
\end{lemma}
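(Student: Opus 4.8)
The plan is to argue by contradiction, using the polarization identity together with $\operatorname{char}\FF\neq 2$. So suppose the form $<\cdot,\cdot>$ is non-zero but every vector of $V$ is isotropic, i.e. $<v,v>=0$ for all $v\in V$; I will deduce that $<u,v>=0$ for all $u,v$, a contradiction.

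First I would dispose of the symmetric case, where $<\cdot,\cdot>$ is $\FF$-bilinear and symmetric. For $u,v\in V$ one gets $0=<u+v,\,u+v>=<u,u>+2<u,v>+<v,v>=2<u,v>$, and since $\operatorname{char}\FF\neq 2$ this forces $<u,v>=0$, contradicting that the form is non-zero.

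Next I would treat the Hermitian and skew-Hermitian cases uniformly, writing $<v,u>=\epsilon\,\overline{<u,v>}$ with $\epsilon=1$ (Hermitian) or $\epsilon=-1$ (skew-Hermitian), $V$ a $\KK$-space and the form $\KK$-linear in the first argument. For $u,v\in V$ and $\lambda\in\KK$, expanding $<u+\lambda v,\,u+\lambda v>$ and using $<u,u>=<v,v>=0$ gives
$$0 = <u+\lambda v,\,u+\lambda v> = \bar\lambda\, z + \epsilon\,\lambda\,\bar z, \qquad \text{where } z:=<u,v>.$$
Taking $\lambda=1$ yields $z+\epsilon\bar z=0$; choosing $\mu\in\KK$ with $\bar\mu=-\mu$ (such $\mu$ exists because $\operatorname{char}\FF\neq 2$, so $\KK=\FF(\sqrt d)$ with $\overline{\sqrt d}=-\sqrt d$) and taking $\lambda=\mu$ yields $\mu(-z+\epsilon\bar z)=0$, hence $z-\epsilon\bar z=0$. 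Adding the two relations gives $2z=0$, so $z=0$. Thus $<u,v>=0$ for all $u,v$, again a contradiction.

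This argument is entirely elementary; there is no genuine obstacle. The only points deserving attention are that $\operatorname{char}\FF\neq 2$ is used essentially — once to divide by $2$ in the symmetric case, and once in the Hermitian/skew-Hermitian cases both to produce $\mu$ with $\bar\mu=-\mu$ and to pass from $2z=0$ to $z=0$ — and that skew-symmetric (symplectic) forms are deliberately excluded from the statement, since for those every vector is isotropic and the conclusion fails.
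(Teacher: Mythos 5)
Your proof is correct and follows essentially the same route as the paper: assume all vectors are isotropic, polarize, and derive that the form must vanish, contradicting non-degeneracy of the hypothesis that it is non-zero. The only difference is one of detail — the paper polarizes once and asserts that the resulting skew-symmetry contradicts the assumptions, while you spell out the Hermitian/skew-Hermitian cases by scaling with $\lambda$ and $\mu$ satisfying $\bar\mu=-\mu$, which is exactly the argument the paper leaves implicit.
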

\begin{proof}
Assume that $<v, v>=0$ holds for any $V$. Then
$$0=<u+v, u+v>-<u,u>-<v,v>=<u,v>+<v,u>.$$
Thus the form is skew-symmetric, which contradicts our assumptions.
\end{proof}
\begin{corollary}
Any homogeneous operator decomposes as the direct orthogonal sum of simple non-split blocks, unless it is in the $\mathfrak{o}$ case and its minimal polynomial is $x^d$ for some even $d$, or it is in the  $\mathfrak{sp}$ case and its minimal polynomial is $x^d$ for some odd $d$.
In these cases, it decomposes as the direct sum of simple even nilpotent blocks (respectively simple odd  nilpotent blocks).
\end{corollary}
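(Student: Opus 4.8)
By Proposition \ref{second classification} it suffices to treat a homogeneous operator $A$ on $V$; write its minimal polynomial as $f^d$, so $f$ is irreducible with $f^*=\pm f$ and every rational block of $A$ has size $d$. View $V$ as a module over the Artinian local ring $R:=\KK[x]/(f^d)$ with $x$ acting as $A$; homogeneity says precisely that $V$ is a \emph{free} $R$-module, of some rank $k$. The involution $h\mapsto h^*$ of $\KK[x]$ descends (using $f^*=\pm f$) to an involution $\tau$ of $R$ with $x^\tau=-x$, and $A^*=-A$ reads $B(ru,w)=B(u,r^\tau w)$ for $r\in R$. The plan is an inductive Gram--Schmidt over $R$: peel off one ``standard'' orthogonal summand and pass to $U^\perp$, which is again a free $R$-module (hence $A$-invariant and homogeneous with the same minimal polynomial) of rank $k-1$ or $k-2$, then induct on $k$.

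The first step is to refine $B$ to an $R$-valued form. Since $R$ is a Frobenius algebra over $\FF$ (a composite of the Frobenius $\FF$-algebra $\KK$ and the Frobenius $\KK$-algebra $\KK[x]/(f^d)$), one may fix a $\tau$-symmetric $\FF$-linear functional $\ell\colon R\to\FF$ that generates $\operatorname{Hom}_\FF(R,\FF)$ as an $R$-module; then there is a unique $R$-sesquilinear $\widetilde B$ on $V$ with $B=\ell\circ\widetilde B$, it satisfies $\widetilde B(w,u)=\epsilon'\widetilde B(u,w)^\tau$ for a sign $\epsilon'=\pm1$, and it is non-degenerate over $R$ (an $R$-submodule of $R$ is an ideal, and $\ell$ annihilates no nonzero ideal, as every nonzero ideal contains the socle). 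Reducing modulo the maximal ideal $(f)$ gives an $L$-vector space $\overline V:=V/fV$, with $L:=\KK[x]/(f)$ a field carrying the involution $\sigma$ induced by $\tau$, and $\widetilde B$ induces a non-degenerate $\sigma$-sesquilinear form $\beta$ on $\overline V$ with the same symmetry sign $\epsilon'$. A finite bookkeeping check over the four types, using $f^*=\pm f$, identifies $\epsilon'$ and shows that $\beta$ is symmetric, Hermitian, or skew-Hermitian in \emph{every} case except the two exceptional ones: $\sigma$ trivial forces $\KK=\FF$ and (in odd characteristic) $f=x$, whence $f^*=-f$, and then the parity of $d$ makes $\beta$ alternating exactly when $\mathfrak{g}=\mathfrak{o}$ and $d$ is even, or $\mathfrak{g}=\mathfrak{sp}$ and $d$ is odd.

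In the non-exceptional case $\beta$ has a non-isotropic vector by Lemma \ref{non-isotropic}; lift it to $v\in V$, so $\widetilde B(v,v)$ is a unit of $R$ and $U:=Rv$ is a free rank-one non-degenerate $A$-invariant direct summand. Then $A|_U$ is cyclic with characteristic polynomial $f^d$, hence minimal regular, and since we are not in an exceptional case $f^d$ is not a forbidden power of $x$, so $A|_U$ is a simple non-split block; now split off $U$ and induct. In the exceptional case $\beta$ is alternating, so choosing $\overline v\neq0$ together with a partner $\overline w$ with $\beta(\overline v,\overline w)\neq0$ and lifting to $v,w\in V$, the subspace $U:=Rv+Rw$ is a free rank-two $A$-invariant direct summand whose $\widetilde B$-Gram matrix has unit determinant (its diagonal entries lie in $(f)$ and the off-diagonal ones are units); after adjusting $v,w$ inside $U$ one brings $\widetilde B|_U$ to the hyperbolic model, which is exactly to say $A|_U$ is a simple even (if $\mathfrak{g}=\mathfrak{o}$) or simple odd (if $\mathfrak{g}=\mathfrak{sp}$) nilpotent block; split off $U$ and induct.

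The main obstacle is the very first step: honestly producing the $R$-valued refinement $\widetilde B$ and a $\tau$-symmetric generator $\ell$ of $\operatorname{Hom}_\FF(R,\FF)$. In positive characteristic the relevant danger is inseparability, but it is neutralized because only the fixed field $L^\sigma$ and the quadratic --- hence, in odd characteristic, separable --- extension $L/L^\sigma$ intervene in the existence of $\ell$. Granting that, the computation of $\epsilon'$ and the closing Gram--Schmidt over $R$ are routine.
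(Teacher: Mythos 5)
Your route is genuinely different from the paper's: you refine $B$ to an $R$-valued form over $R=\KK[x]/(f^d)$ and reduce modulo $(f)$, in the style of Wall, whereas the paper works directly with the $\KK$-valued form $\langle u,v\rangle_U:=\langle f(A)^{d-1}u,v\rangle$ on $U=V/f(A)V$ and never needs a functional $\ell$. However, your first step, as stated, fails exactly in the cases that produce the exceptional blocks: a $\tau$-symmetric generator of $\operatorname{Hom}_\FF(R,\FF)$ need not exist. Concretely, for $\KK=\FF$ and $f=x$, every generator has the form $h\mapsto\ell_0(uh)$ with $u$ a unit, where $\ell_0$ extracts the coefficient of $x^{d-1}$; since $\ell_0(h^\tau)=(-1)^{d-1}\ell_0(h)$, $\tau$-symmetry forces $u^\tau=(-1)^{d-1}u$, and for $d$ even no unit satisfies $u^\tau=-u$ (its constant term would have to vanish). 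For instance in $\FF[x]/(x^2)$ the $\tau$-symmetric functionals are precisely the multiples of $h\mapsto h(0)$, none of which generate. So in the $\mathfrak{o}$, $f=x$, $d$ even case (and the $\mathfrak{sp}$, $f=x$, $d$ even case) your normalization is impossible; note also that the danger you flag, inseparability, is a red herring here — the obstruction is the triviality of the residue involution when $f=x$, a parity phenomenon. The repair is routine but must be made: drop the symmetry requirement and record the sign, i.e. take $\ell$ with $\ell\circ\tau=\pm\ell$ (the sign $(-1)^{d-1}$ is always achievable, e.g. by $\ell_0$ itself), and carry that sign into the symmetry type of $\widetilde B$ and of $\beta$. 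With this correction the bookkeeping does come out as you claim, and in fact for the explicit choice $\ell=\ell_0$ your $\beta$ is exactly the paper's form $\langle f(A)^{d-1}u,v\rangle$ (respectively $\langle Af(A)^{d-1}u,v\rangle$ in the invertible symplectic case).

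The second thin spot is the sentence ``after adjusting $v,w$ inside $U$ one brings $\widetilde B|_U$ to the hyperbolic model'': this is precisely where the paper does its real work. One needs a successive-approximation argument over $R$ (in the paper: correcting $v_1$ by multiples of $A^{m-1}v_2$, where it matters that the first offending index is automatically of the right parity, and then re-choosing $v_2$ orthogonal to $v_1,Av_1,\dots,A^{d-2}v_1$), and only after that does one read off the standard basis of a simple even (or odd) nilpotent block; a unit determinant of the Gram matrix alone does not give this. The non-exceptional half of your argument (non-isotropic vector via Lemma \ref{non-isotropic}, $Rv$ a free rank-one non-degenerate summand, induction on the rank) is fine and parallels the paper. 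With the sign fix and the hyperbolicity step written out, your proof is correct and somewhat more conceptual; what the paper's hands-on version buys is that it avoids the Frobenius-algebra formalism and any choice of $\ell$ altogether.
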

\begin{proof}
Assume we are neither in the symplectic case nor in the even nilpotent othogonal case which was excluded. Define a non-degenerate sesquilinear form on $U:=V/f(A)V$ by $<u,v>_U:=<f(A)^{d-1}u,v>_V$. If we are in the unitary case, then it is either Hermitian or skew-Hermitian, depending on whether $(f^{d-1})^*=f^{d-1}$ ($f^{d-1}$ is of even degree), or $(f^{d-1})^*=-f^{d-1}$ ($f^{d-1}$ is of odd degree). If we are in the orthogonal case, $(f^{d-1})^*=f^{d-1}$, and so this bilinear form is symmetric. So there is a non isotopic vector in $U$, which means that there is a vector $v\in V$ such that $<f(A)^{d-1}v,v>=\lambda\neq 0$.
Let $V_0=\Span(v, Av, A^2v,\dots)$. It is an $A$-invariant space, to which the restriction of the form $B$ is non-degenerate. It is also generated by one vector ($v$). By induction, we are done.

In the symplectic case which is not the case excluded, we have a similar proof. Again define a bilinear form on $U:=V/f(A)V$. If the minimal polynomial is $x^d$ for $d$ even, define as before $<u,v>_U:=<A^{d-1}u,v>$, and it will be a symmetric form. Otherwise, $A$ is invertible and we set $<u,v>_U:=<Af(A)^{d-1}u,v>$. Again this bilinear form is symmetric. The rest of the proof follows the same, with noticing that $<Af(A)^{d-1}v,v>\neq 0$ implies that $V_0\cap V_0^\perp = 0$ for $V_0=\Span(v, Av, A^2v,\dots)$.

In the orthogonal case, if the minimal polynomial is $x^d$ for $d$ even (respectively the symplectic case and $d$ odd), the biliear form $<u, v>_U:=<A^{d-1}u, v>$ on $U$ is skew-symmetric. Take $u_1, u_2\in U$ s.t. $<u_1, u_2>_U=1$. Lift them to $v_1, v_2 \in V$, and Take $V_0 = \Span(v_1, Av_1, A^2v_1, \dots) \oplus \Span(v_2, Av_2, A^2v_2, \dots)$ (note that this sum is indeed direct). $V_0$ is $A$-invariant, and $V_0^\perp\cap V_0=0$.
Now we are left to show that $V_0$ is a simple even nilpotent block (respectively simple odd nilpotent block).
The first step is to show we can alter the lifts of $u_1, u_2$ to $v_1, v_2$ (from $V_0 / AV_0$ to $V_0$) such that $<A^j v_1, v_1> = 0$ for all $j$ (We will show it for $v_1$, but it is exactly the same for $v_2$). For odd (respectively even) $j$ this holds automatically. We assume for the following that $v_2$ is any lift of $u_2$ (the important property is that $<A^{d-1}v_1, v_2> = 1$). If $m$ is the minimal integer such that $<A^{d-m}v_1, v_1>\neq 0$, we can add a multiple of $A^{m-1}v_2$ to $v_1$ to fix that, as
\begin{equation*}
    \begin{split}
& <A^{d-m}(v_1 + \lambda A^{m-1}v_2), v_1 + \lambda A^{m-1}v_2> = \\
= & <A^{d-m}v_1, v_1> + 2\lambda<A^{d-1}v_2, v_1> - \lambda ^ 2<A^{d+m-2}v_2, v_2> =  \\
= & <A^{d-m}v_1, v_1> - 2\lambda,
    \end{split}
\end{equation*}
$$<A^{d-i}(v_1 + \lambda A^{m-1}v_2), v_1 + \lambda A^{m-1}v_2> = <A^{d-i}v_1, v_1> = 0$$
for any $i<m$, and
$$<A^{d-1}(V_1+\lambda A^{m-1}v_2), v_2>=<A^{d-1}v_1, v_2> = 1.$$
Notice that $m$ must be even and $m\geq 2$.
So by applying this consequtively, we may change the lift of $v_1$ (and similarly $v_2$) in the desired way (notice that indeed we changed it only by vectors in $AV_0$).
Now all that is left is to again change $v_1, v_2$ so that $<A^kv_1, v_2> = 0$ for any $k < d - 1$. For this simply choose a vector $\tilde{v}_2$ which is orthogonal to $v_1, Av_1, A^2v_1, \dots, A^{d-2}v_1, v_2, Av_2, \dots, A^{d-1}v_2$ and such that $<A^{d-1}v_1, v_2> = 1$. Note that the vector $v_2 - \tilde{v}_2$ is perpendicular to the subspace $V_2:=\Span(v_2, Av_2, \dots, A^{d-1}v_2)$, and so $v_2 - \tilde{v}_2\in V_2^\perp = V_2$. This implies that for all $k\geq 0$, $<A^k \tilde{v}_2, \tilde{v}_2> = 0$. Also $v_2 - \tilde{v}_2$ is perpendicular to $A^{d-1}V_0$, and so $v_2 - \tilde{v}_2\in AV_2$.
Thus we can replace $v_2$ with $\tilde{v}_2$, and all of the needed conditions will be satisfied.
\end{proof}

The above immediately implies Proposition \ref{full classification}

\section{On the centralizers in $\widetilde{\operatorname{SO}}$}\label{SO centralizer}
In this appendix we prove Theorem \ref{tc} for the case of $G = \operatorname{SO}(V)$. We need to show that there exists an element $T\in \operatorname{O}(V)$ such that $TAT^{-1} = -A$ and $\det T = (-1)^{\floor{\frac{n+1}{2}}}$.
Assume the decomposition of \ref{full classification}. It is enough to prove Theorem \ref{tc} for each of the blocks, as then taking the direct sum of the elements $T_i$ found gives an element $T\in \operatorname{O}(V)$ with $TAT^{-1} = -A$. If all of the dimensions of the blocks $n_i$ are even, then $\det(\bigoplus T_i) = \prod \det T_i = (-1)^{\sum n_i / 2} = (-1) ^ {n / 2}$. Otherwise there is an odd block, and by replacing $T_i$ by $-T_i$ if needed we can control the sign of the determinant to be as we wish.
Now we need to check each of the simple block types.
\subsection{Simple split blocks}
We have $V = V'\oplus V'^*$, with the natural symmetric bilinear form coming from the pairing. $A = \begin{bmatrix} A'&0\\0&-A'^* \end{bmatrix}$. By the well known theorem claiming  that any square matrix (over any field and of any dimension) is conjugate to its transpose, there is an isomorphism $B: V'^*\to V'$ such that $BA'^*B^{-1} = A'$. Taking $T = \begin{bmatrix} 0&B\\(B^*)^{-1}&0\end{bmatrix}\in \operatorname{O}(V)$, we get 
\begin{align*}
    TAT^{-1} &= \begin{bmatrix} 0&B\\(B^*)^{-1}&0\end{bmatrix} \begin{bmatrix} A'&0\\0&-A'^* \end{bmatrix}\begin{bmatrix} 0&B^*\\B^{-1}&0\end{bmatrix} = \begin{bmatrix} 0&-BA'^*\\(B^*)^{-1}A'&0\end{bmatrix} \begin{bmatrix} 0&B^*\\B^{-1}&0\end{bmatrix} \\
    &=\begin{bmatrix} -BA'^*B^{-1}&0\\ 0&(B^*)^{-1}A'B^*\end{bmatrix}= \begin{bmatrix} -A'&0\\ 0&A'^*\end{bmatrix} = -A.
\end{align*}
Also $\det T = (-1)^{\dim V'} = (-1)^{\dim V/2}$.

\subsection{Simple non-split blocks}
We have $V = \Span(e, Ae, A^2e, \dots)$ for some $e\in V$. Define $T:V\to V$ by $T(g(A)e) = g(-A)e$. It is well defined since $g$ is well defined modulo $f(A)^d$, and $f(-A)=\pm f(A)$. $T$ is an element of $\operatorname{O}(V)$, as
$$<g(-A)e, h(-A)e> = <e, g(A)h(-A)e> = <h(-A)g(A)e, e> = <g(A)e, h(A)e>.$$
Clearly $TAT^{-1} = -A$. Also $\det T = (-1)^{\floor{n/2}}$, which is what we wanted in the non-nilpotent case (where $n$ is even), and in the nilpotent case (where $n$ is odd) we may replace $T$ by $-T$ if needed, in order to achieve the desired sign of the determinant of $T$.

\subsection{Simple even nilpotent blocks}
We can choose $T(A^ie) = (-1)^iA^ie$ and $T(A^if) = (-1)^{i+1}A^if$. It clearly satisfies $TAT^{-1} = -A$ and $T\in \operatorname{O}(V)$. It only remains to note that $\det T = (-1) ^{\dim V / 2}$, which is exactly what we wanted.

\end{document}